\setlist[enumerate]{leftmargin=.5in}
\setlist[itemize]{leftmargin=.5in}
\crefname{hypothesis}{Hypothesis}{Hypotheses}
\newcommand{\lno}{\left\|}
\newcommand{\rno}{\right\|}
\newcommand{\dom}[1]{\mathrm{\bf dom}\,{(#1)}} 
\newcommand{\intset}[1]{\mathrm{\bf int}\,{(#1)}} 
\newcommand{\graph}[1]{\mathrm{\bf graph}\,{(#1)}} 
\newcommand{\crit}[1]{\mathrm{\bf crit}\,{(#1)}} 
\newcommand{\levelset}{\mathrm{\bf Lev}} 
\newcommand{\prox}{\mathrm{\bf Prox}} 
\newcommand{\dist}{\mathrm{\bf dist}} 
\newcommand{\shrink}{\mathrm{\bf shrink}}
\newcommand{\Proj}{\mathrm{\bf Proj}}
\newcommand{\SVT}{\mathrm{\bf SVT}}
\newcommand{\RR}{\mathbb{R}}
\newcommand{\NN}{\mathbb{N}}
\def\B{\mathscr{B}}
\def\D{\mathbf{D}}
\newcommand{\ww}{\bm{w}}
\newcommand{\uu}{\bm{u}}
\newcommand{\vv}{\bm{v}}
\newcommand{\zz}{\bm{z}}
\title{A Unified Bregman Alternating Minimization Algorithm for Generalized DC Programming with Application to Imaging Data\thanks{Submitted to the editors XX 202X.
		\funding{This work was partially supported by National Natural Science Foundation of China (No. 11771113) and Ningbo Natural Science Foundation (Project ID: 2023J014).}}}
\author{Hongjin He\thanks{School of Mathematics and Statistics, Ningbo University, Ningbo 315211, China.
		(\email{hehongjin@nbu.edu.cn}).}
	\and Zhiyuan Zhang\thanks{School of Mathematical Sciences, Xiamen University, Xiamen 360015, China.
		(\email{zhang510zg@gmail.com}).}
}
\begin{document}
\graphicspath{{./PICs/}}

\maketitle

\begin{abstract}
  In this paper, we consider a class of nonconvex (not necessarily differentiable) optimization problems called generalized DC (Difference-of-Convex functions) programming, which is minimizing the sum of two separable DC parts and one two-block-variable coupling function. To circumvent the nonconvexity and nonseparability of the problem under consideration, we accordingly introduce a Unified Bregman Alternating Minimization Algorithm (UBAMA) by maximally exploiting the favorable DC structure of the objective. Specifically, we first follow the spirit of alternating minimization to update each block variable in a sequential order, which can efficiently tackle the nonseparablitity caused by the coupling function. Then, we employ the Fenchel-Young inequality to approximate the second DC components (i.e., concave parts) so that each subproblem reduces to a convex optimization problem, thereby alleviating the computational burden of the nonconvex DC parts. Moreover, each subproblem absorbs a Bregman proximal regularization term, which is usually beneficial for inducing closed-form solutions of subproblems for many cases via choosing appropriate Bregman kernel functions. It is remarkable that our algorithm not only provides an algorithmic framework to understand the iterative schemes of some novel existing algorithms, but also enjoys implementable schemes with easier subproblems than some state-of-the-art first-order algorithms developed for generic nonconvex and nonsmooth optimization problems. Theoretically, we prove that the sequence generated by our algorithm globally converges to a critical point under the Kurdyka-{\L}ojasiewicz (K{\L}) condition. Besides, we estimate the local convergence rates of our algorithm when we further know the prior information of the K{\L} exponent. A series of numerical experiments on imaging data demonstrate the reliability and efficiency of the proposed algorithmic framework.
\end{abstract}

\begin{keywords}
DC programming, nonconvex optimization, Bregman distance, alternating minimization algorithm, first-order methods, Kurdyka-{\L}ojasiewicz property, image processing.
\end{keywords}

\begin{AMS}
  90C26, 90C30, 49M37, 65K10
\end{AMS}

\section{Introduction}\label{Sec:Intro}
With the rapid developments of artificial intelligence, nonconvex (differentiable/nondifferentiable) optimization problems frequently appear and play important roles in the areas of data sciences, imaging processing, computer visions, and machine learning, e.g., see some surveys \cite{CD20,CLC19,CP21,DDGGGKS22,RHLNSH20,Val21} and the most recent monograph \cite{CP22} on nonconvex topics. As the backbone of nonconvex programming and global optimization, in recent years, we have witnessed the fruitful developments of DC (Difference-of-Convex functions) programming and DCA (DC Algorithms) from theory to their applications. Here, we refer the reader to a recent survey \cite{LTPD18} and reference therein for thirty years of developments along this direction. As stated by Le Thi and Pham Dinh in \cite{LTPD18}: ``{\it Despite the bright successes obtained by researchers and practitioners in the use of DC programming and DCA for modeling and solving nonconvex and global optimization problems, their works have not exploited the full power and creative freedom offered by these tools: their proposed DCAs, although more efficient than existing methods, but can be further improved to better handle large-scale problems}'', we are motivated to consider a class of structured DC programming and to design a structure-exploiting algorithm to solve the underlying problem.
Specifically, in this paper, we are interested in a class of generalized DC programming, which reads as
\begin{equation}\label{eq:problem}
\min_{x\in \RR^n, y\in \RR^m} \; \left\{\;\Phi(x,y) :=\underbrace{f_1(x)-g_1(x)}_{\theta_1(x)}+\underbrace{f_2(y)-g_2(y)}_{\theta_2(y)}+\underbrace{h^+(x,y)-h^-(x,y)}_{h(x,y)}\;\right\},
\end{equation}
where $f_i(\cdot)$ and $g_i(\cdot)$ are convex (not necessarily smooth) functions for $i=1,2$, and $h(x,y)$ is a continuously differentiable but coupling function, which is also assumed to be decomposable into two parts $h^+(x,y)$ and $h^-(x,y)$ similar yet slightly different to the case studied in \cite{LPLV15,PHLH22}. Unlike the standard DC programming studied in the literature where the objective \eqref{eq:problem} possesses only one block variable (e.g., see \cite{BB19,GZLHY13,GTT18,PDLT97,LZOX15,LZ19,LZS19,MLH17,MM08,YLHX15,ZBN20}, to name just a few), our model \eqref{eq:problem} under consideration can be regarded as the complementary or generalized case to the typical DC programming. Here, it is worth pointing out that we only need to assume both components $f_1(\cdot)$ and $f_2(\cdot)$ being proper lower semicontinuous functions for convergence analysis, which makes our approach applicable to a wider range of nonconvex optimization models. Besides, we restrict the later discussion to the case of \eqref{eq:problem} with vector variables, all of our results are also available to the case with matrix variables (see Section \ref{Sec:Exp}). 

When applying the traditional DCA-type algorithms (e.g., \cite{AV20,CHZ22,deOT19,KW19,WCP18}) tailored for standard DC programming, the main computational challenge is that the appearance of coupling part $h(x,y)$ makes these algorithms difficult to be implemented in many cases. Accordingly, a natural and efficient way to handle block coupling optimization problems is the so-named alternating minimization (a.k.a. block coordinate descent or Gauss-Seidel) algorithm (e.g., see \cite{Beck17,BT89,Wright-2015}), which iteratively and alternately keeps one of block variables fixed and optimizes over the other. More concretely, for given initial point $(x^0,y^0) \in \RR^n \times \RR^m$, the iterative scheme of the alternating minimization algorithm reads as
\begin{equation}\label{eq:AM}
\left\{
\begin{aligned}
x^{k+1} & \in  \arg\min_{x \in \RR^n} \; \Phi(x,y^k),\\
y^{k+1} & \in  \arg\min_{y \in \RR^m} \; \Phi(x^{k+1},y).
\end{aligned}\right.
\end{equation}
Unfortunately, it has been documented in \cite{Powell-1973} that \eqref{eq:AM} may cycle indefinitely without converging if the minimum in each step cannot be uniquely attained. Accordingly, in \cite{BT13,Ber16,GS99,LT93,Tse01}, the authors studied some convergence behaviors of \eqref{eq:AM} under various assumptions, such as pseudoconvexity, quasiconvexity, convexity and continuity, on the objective $\Phi(x,y)$. In particular, some papers (e.g., \cite{Aus92,GS00}) are contributed to removing the strict convexity assumption by imposing proximal regularization terms into the subproblems of \eqref{eq:AM}, i.e.,
\begin{equation}\label{eq:pAM}
\left\{\begin{aligned}
x^{k+1} & =\arg\min_{x \in \RR^n} \; \left\{\Phi(x,y^k) + \frac{c_k}{2} \lno x - x^k \rno^2\right\},\\
y^{k+1} & = \arg\min_{y \in \RR^m} \; \left\{\Phi(x^{k+1},y) + \frac{d_k}{2} \lno y - y^k \rno^2\right\},
\end{aligned}\right.
\end{equation}
where $c_k$ and $d_k$ are positive numbers. Clearly, both subproblems of \eqref{eq:pAM} are strongly convex as long as the objective $\Phi(\cdot,\cdot)$ is convex of one argument while the other is fixed, which can easily guarantee each subproblem to have a unique solution.
Furthermore, some novel works \cite{ABRS10,ABS13} are devoted to the convergence of \eqref{eq:pAM} for handling generic nonconvex cases under the well-known Kurdyka-{\L}ojasiewicz (K{\L}) property (see \cite{Kurdyka-1998,Loj63} and also \cite{BDL07}).
However, the iterative scheme \eqref{eq:pAM} probably suffers from difficult subproblems in many cases, for example when applying to nonnegative matrix/tensor factorization problems (see \cite{AHGP21,BST14,XY13}), even though the first two separable functions $\theta_1(x)$ and $\theta_2(y)$ are void, and hence is potentially slow with a comparatively high computational complexity in practice. Therefore, to alleviate the computational cost caused by the coupling function, Bolte et al. \cite{BST14} judiciously introduced a Proximal Alternating Linearized Minimization (PALM) algorithm, which considers an approximation of \eqref{eq:pAM} via the standard proximal linearization of each subproblem, thereby maximally making its subproblems enough easy with closed-form solutions in some cases. Specifically, the PALM algorithm updates $x$ and $y$ alternately via
\begin{equation}\label{eq:PALM}
\left\{\begin{aligned}
x^{k+1} & = \arg\min_{x \in \RR^n} \; \left\{\theta_1(x) + \left\langle \nabla_x h(x^k,y^k), x-x^k \right\rangle + \frac{c_k}{2} \lno x - x^k \rno^2\right\},\\
y^{k+1} & = \arg\min_{y \in \RR^m} \;\left\{\theta_2(y) + \left\langle \nabla_y h(x^{k+1},y^k), y-y^k \right\rangle + \frac{d_k}{2} \lno y - y^k \rno^2\right\},
\end{aligned}\right.
\end{equation}
where $\nabla_x h(\cdot,y)$ and $\nabla_y h(x,\cdot)$ represent the block-partial gradients of $h(x,y)$, respectively. Moreover, some convergence results of \eqref{eq:PALM} were established under the K{\L} property and the Lipschitz continuity of $\nabla h(x,y)$. Alternatively, when both $\theta_1(\cdot)$ and $\theta_2(\cdot)$ are assumed to be differentiable, the authors \cite{LTP22,NT19} considered locally quadratic approximations of $\theta_1(\cdot)$ and $\theta_2(\cdot)$, and meanwhile exploited potentially explicit forms of the proximal operators with respect to the partial functions $h(\cdot,y^k)$ and $h(x^{k+1},\cdot)$, respectively. In the recent literature, a series of novel variants of \eqref{eq:pAM} and \eqref{eq:PALM} are developed for generic nonconvex and nonsmooth optimization problems, e.g., see \cite{AHGP21,BCN19,GCH20,HPG23,HWRL17,OCBP14,PS16,XY13,XY17}, to mention just a few.  

Actually, it is not difficult to observe that the efficiency of \eqref{eq:AM}, \eqref{eq:pAM}, and \eqref{eq:PALM} heavily depends on the complexity of their subproblems. With the above literature review, we can see that, when applying the aforementioned alternating minimization algorithms and their variants to our model \eqref{eq:problem}, these algorithms ignore the DC structure of the objective function, thereby still possibly suffering from difficult subproblems. For instance, when the nonconvex function $\theta_1(\cdot)$ (or $\theta_2(\cdot)$) is specified as the nonsmooth capped norms (see \cite{SXY13} or our experiments in Section \ref{Sec:Exp}), the algorithms \eqref{eq:AM}, \eqref{eq:pAM}, and \eqref{eq:PALM} must solve a DC optimization subproblem via an inner loop, which is often an expensive procedure for solving large-scale problems. Therefore, how to design an algorithm that is able to make full use of the favorable DC structure of model \eqref{eq:problem} is the main motivation of this paper. 

In this paper, we aim to introduce a Unified Bregman Alternating Minimization Algorithm (UBAMA) to deal with the generalized DC programming \eqref{eq:problem}. Our approach exploits the block-variable structure to update variables in a sequential order so that it can easily circumvent the nonseparability appeared in the objective function. 
Then, instead of solving DC optimization subproblems directly, we consider approximations to the second DC components and the coupling part by employing the Fenchel-Young inequality and Bregman proximal regularization, respectively. Unlike many traditional DC algorithms for one-block DC programming, one remarkable advantage of our approach is that it does not require the second DC parts (i.e., $g_1(\cdot)$ and $g_2(\cdot)$) being differentiable. Another remarkable advantage is that the embedded Bregman proximal regularization terms make our approach easily implementable and flexible for many cases. 
In other words, our approach provides an algorithmic framework to understand the iterative schemes of some state-of-the-art first-order algorithms proposed in \cite{ABRS10,BST14,NT19,PHLH22,SSC03,TFT22} (see remarks in Section \ref{Sec:Alg}) for nonsmooth and nonconvex optimization, so that these algorithms can be concatenated together. Besides, our approach allows us to derive some customized algorithms for some specific applications in the sense that the resulting algorithms often enjoy easy subproblems with closed-form solutions (see examples Section \ref{Sec:Exp}). 
Theoretically, we prove that the sequence generated by our algorithm is globally convergent to a critical point of \eqref{eq:problem} under the K{\L} condition. Besides, we give the estimation of the local convergence rates of our algorithm under extra prior information of K{\L} exponent. Finally, we modify some imaging optimization models to fall into the form of \eqref{eq:problem} for the purpose of verifying the reliability of our proposed algorithm. A series of numerical experiments on imaging data further demonstrate that our approach is promising and reliable for solving generalized DC programming \eqref{eq:problem}.

The remainder of this paper is organized as follows. In Section \ref{Sec:Pre}, we introduce some preliminaries and recall some basic properties for nonconvex optimization. In Section \ref{Sec:Alg}, we present our algorithm and give some remarks to state that some classical iterative schemes for nonconvex optimization problems fall into the framework of our approach. In Section \ref{Sec:Converg}, we establish some convergence properties of the proposed algorithm. In Section \ref{Sec:Exp}, some numerical experiments on imaging data are conducted to support the idea of this paper. Finally, we close this paper with drawing some concluding remarks in Section \ref{Sec:Con}.

\section{Preliminaries} \label{Sec:Pre}
In this section, we summarize some notions, definitions, and basic properties of K{\L} inequality from (non) convex and nonsmooth analysis that will be used throughout this paper (e.g., see \cite{Roc70,RW98}).

Let $\RR^n$ be an $n$-dimensional Euclidean space equipped with the standard inner product of vectors defined by $\langle \cdot,\cdot \rangle$. For a given $x\in \RR^n$ and $1\leq p <\infty$, we denote $\lno x\rno_p=\left(\sum_{i=1}^{n}|x_i|^p\right)^{\frac{1}{p}}$ as the $\ell_p$ norm of $x$, where $x_i$ is the $i$-th component of vector $x$. In particular, we denote the $\ell_2$-norm (i.e., Euclidean norm) of $x\in\RR^n$ by $\lno x\rno\equiv \sqrt{ \langle x,\;x \rangle}$. Moreover, letting $M$ be a positive definite matrix (denoted by $M\succ0$), we define the $M$-norm of $x\in\RR^n$ by $\lno x\rno_M = \sqrt{\langle x, M x\rangle}$. Given a matrix $X\in \RR^{m\times n}$, we denote the nuclear norm of matrix $X$ by $\lno X\rno_*:= \sum_{i=1}^{\min\{m,n\}} \sigma_i(X)$, where $\sigma_i(X)$  is the $i$-th largest singular value of $X$. Besides, we denote $\|X\|_F$ as the standard Frobenius norm of matrix $X$, and $\|X\|_1$ is an extension of the $\ell_1$ norm of vectors for matrices, which represents the sum of absolute values of all entries.

Below, we first recall some basic definitions for the coming analysis.
\begin{definition}\label{def:lsc}
	Let $f(\cdot):\RR^n \to [-\infty,+\infty]$ be an extended-real-valued function, and denote the domain and level set of $f(\cdot)$ by 
	$$\dom{f} := \left\{x\in \RR^n\;|\;f(x) <\infty\right\}\quad \text{and}\quad \levelset_f(\alpha)=\{x\in \RR^n\;|\; f(x) \le \alpha\},$$ respectively. Then, we say that the function $f(\cdot)$ is
	\begin{enumerate}			
		\item[\rm (i)] proper if $f(x)  >-\infty$ for all $x \in \RR^n$ and $\dom{f}\neq \emptyset$;
		\item[\rm (ii)]  lower semicontinuous if and only if the level set $\levelset_f(\alpha)$ is closed for every $\alpha\in\RR$;
		\item[\rm (iii)] lower level-bounded if its level set $\levelset_f(\alpha)$ is bounded (possibly empty) for every $\alpha \in \RR$;
		\item[\rm (iv)] convex if $f\left(tx+(1-t)y\right)\leq t f(x) +(1-t)f(y)$ for all $x,y\in \dom{f} $ and $t\in[0,1]$;
		\item[\rm (v)] $\varrho$-strongly convex for a given $\varrho>0$ if $\dom{f}$ is convex and the following inequality holds for any $x,y\in \dom{f}$ and $t\in[0,1]$:
		$$f(tx + (1-t)y)\leq t f(x) + (1-t)f(y) - \frac{\varrho}{2}t(1-t)\|x-y\|^2.$$
	\end{enumerate}
\end{definition}

\begin{definition}\label{def:subdiff}
	Let $f(\cdot): \RR^n \to (-\infty,+\infty]$ be a proper and lower semicontinuous function.
	\begin{enumerate}
		\item[\rm (i)] For each $x \in \dom{f}$, the Fr\'{e}chet subdifferential $\widehat{\partial}f(x)$ of $f(\cdot)$ at $x$ is defined by
		\begin{equation*}
		\widehat{\partial}f(x)  :=  \left\{ \xi\in\RR^n \;\Big{|}\; \liminf_{\substack{y\ne x\\ y\to x}} \frac{f(y)-f(x) -\langle \xi,y-x \rangle}{\|y-x\|} \ge 0\right\}.
		\end{equation*}
		In particular, when $x \notin \dom{f}$, we set $\widehat{\partial}f(x)  = \emptyset$.
		\item[\rm (ii)] The limiting subdifferential $\partial f(x)$ of $f(\cdot)$ at $x\in \dom{f}$ is defined by
		\begin{equation*}
		\partial f(x)  := \left\{\xi \in \RR^n\;\Big{|}\; \begin{array}{l}
		\exists \left(x^k,f(x^k)\right) \to \left(x,f(x)\right), \xi^k \in \widehat{\partial}f(x^k) \\ \text{such that  }\; \xi^k\to \xi\; \text{  as } \;k\to +\infty 
		\end{array} \right\}.
		\end{equation*}
	\end{enumerate}
\end{definition}

It follows from Definition~\ref{def:subdiff} that $\widehat{\partial}f(x)  \subset \partial f(x) $ for each $x \in \RR^n$, where the first set $\widehat{\partial}f(x) $ is convex and closed while the second one $ \partial f(x)$  is closed (e.g., see \cite[Theorem 8.6]{RW98}). 
Let $(x^k,\xi^k) \in \graph{\partial f}:= \left\{(x,\xi)\in \RR^n \times \RR^n\;|\;\xi \in \partial f(x) \right\}$ be a sequence. If $(x^k,\xi^k)$ converges to $(x,\xi)$ and $f(x^k)$ converges to $f(x) $, then $(x,\xi) \in \graph{\partial f}$. 
When $f(\cdot)$ is continuously differentiable, the limiting subdifferential reduces to the gradient of $f(\cdot)$, denoted by $\nabla f$ (e.g., see \cite[Exercise 8.8(b)]{RW98}). Moreover, when $f(\cdot)$ is convex, the limiting subdifferential reduces to the classical subdifferential in convex analysis (see \cite[Proposition 8.12]{RW98}).
A necessary but not sufficient condition for $x^* \in \RR^n$ to be a local minimizer of $f(\cdot)$ is that $x^*$ is a (limiting-) critical point, i.e., $0 \in \partial f(x^*).$ Throughout this paper, the set of critical points of $f(\cdot)$ is denoted by $\crit{f}:= \left\{x\;|\;0\in \partial f(x) \right\}$.

As shown in the literature (e.g., see \cite[Chapter 5]{Beck17}), the following first-order characterizations of strong convexity are frequently used for analysis.
\begin{lemma}\label{lem:strong}
	Let $f(\cdot): \RR^n\to (-\infty,\infty]$ be a proper closed and convex function. Then, for a given $\varrho>0$, the following three claims are equivalent:
	\begin{itemize}
		\item[\rm (i)] $f(\cdot)$ is $\varrho$-strongly convex.
		\item[\rm (ii)] $f(\cdot)-\frac{\varrho}{2}\|\cdot\|^2$ is convex. 
		\item[\rm (iii)] $f(y)\geq f(x)+\langle \xi, y-x\rangle + \frac{\varrho}{2}\|y-x\|^2$ for any $x\in\dom{\partial f}$, $y\in\dom{f}$, and $\xi\in\partial f(x)$.
		\item[\rm (iv)] $\langle \xi-\eta, x-y\rangle \geq \varrho \|x-y\|^2$ for any $x,y\in\dom{\partial f}$ and $\xi \in\partial f(x)$, $\eta\in\partial f(y)$.
	\end{itemize}
\end{lemma}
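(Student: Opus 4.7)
The plan is to establish the cyclic chain $\mathrm{(i)} \Rightarrow \mathrm{(ii)} \Rightarrow \mathrm{(iii)} \Rightarrow \mathrm{(i)}$, which is the standard route for characterizing strong convexity via first-order information.

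For $\mathrm{(i)} \Rightarrow \mathrm{(ii)}$, I would start from the defining inequality in Definition~\ref{def:lsc}(v) applied at the convex combination $(1-t)x + ty$, rearrange it into
\[
\frac{f(x + t(y-x)) - f(x)}{t} \le f(y) - f(x) - \frac{\varrho}{2}(1-t)\|y-x\|^2,
\]
and then let $t \to 0^+$. On the left-hand side, for any $\xi \in \partial f(x)$ the standard subgradient inequality for convex functions gives $\frac{f(x+t(y-x))-f(x)}{t} \ge \langle \xi, y-x\rangle$ for every $t\in(0,1)$, so passing to the limit yields $\langle \xi, y-x\rangle \le f(y) - f(x) - \frac{\varrho}{2}\|y-x\|^2$, which is exactly (ii). Lower semicontinuity of $f$ is only needed implicitly to ensure that the one-sided directional derivative controls the subgradient in the usual way.

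For $\mathrm{(ii)} \Rightarrow \mathrm{(iii)}$, I would write (ii) twice, once with $(x,\xi)$ and target point $y$, and once with $(y,\eta)$ and target point $x$, then add the two inequalities. The function values cancel and what remains is $0 \ge \langle \xi-\eta, y-x\rangle + \varrho \|x-y\|^2$, which after sign flip is precisely (iii). This step is essentially bookkeeping.

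The main obstacle is the closing implication $\mathrm{(iii)} \Rightarrow \mathrm{(i)}$, because it has to upgrade a purely local, first-order, monotonicity-type estimate on $\partial f$ into a global second-order inequality on $f$ itself. My plan is to reduce to a one-dimensional problem: fix $x_0,x_1\in\dom{f}$, set $x_t := (1-t)x_0 + tx_1$ and $\phi(t):=f(x_t)$, which is convex on $[0,1]$ because $f$ is convex. For $\xi_s\in\partial f(x_s)$ and $\xi_t\in\partial f(x_t)$ with $s<t$, condition (iii) gives
\[
(t-s)\langle \xi_t-\xi_s, x_1-x_0\rangle = \langle \xi_t-\xi_s, x_t-x_s\rangle \ge \varrho\,\|x_t-x_s\|^2 = \varrho(t-s)^2\|x_1-x_0\|^2,
\]
so $\langle \xi_t-\xi_s, x_1-x_0\rangle \ge \varrho(t-s)\|x_1-x_0\|^2$. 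Since elements of $\partial\phi(t)$ are exactly $\langle \xi_t, x_1-x_0\rangle$, this shows that $\phi'$ (which exists a.e. by convexity) grows with slope at least $\varrho\|x_1-x_0\|^2$, i.e.\ $\phi$ is $\varrho\|x_1-x_0\|^2$-strongly convex on $[0,1]$. Evaluating the one-dimensional strong convexity inequality at $t\in[0,1]$ and translating back to $f$ yields Definition~\ref{def:lsc}(v). To handle points $x_0,x_1 \in \dom{f}\setminus \dom{\partial f}$, I would approximate them by points in $\dom{\partial f}$ (which is dense in $\dom{f}$ for proper closed convex $f$) and pass to the limit using lower semicontinuity. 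The delicate technical point is precisely this density/closure step, plus the justification that $\partial\phi(t)$ is obtained from $\partial f(x_t)$ in the claimed way, but both are standard consequences of convex analysis that can be cited from \cite{Roc70,RW98}.
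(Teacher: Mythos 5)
Your cyclic argument is correct in outline, but there is nothing in the paper to compare it against: the authors do not prove Lemma~\ref{lem:strong} at all, they simply import it as a known result, citing Theorem~5.24 of \cite{Beck17}. What you have reconstructed is essentially that textbook's proof --- (i)$\Rightarrow$(ii) by dividing the strong-convexity inequality by $t$ and letting $t\to 0^+$ against the subgradient inequality, (ii)$\Rightarrow$(iii) by symmetrizing and adding, and (iii)$\Rightarrow$(i) by restricting to a line segment --- so in that sense you have supplied the standard argument the paper chose to outsource. Two small points of care in your closing implication. First, the elements $\langle\xi_t,x_1-x_0\rangle$ with $\xi_t\in\partial f(x_t)$ are in general only \emph{contained in} $\partial\phi(t)$, with equality of the endpoint slopes guaranteed on the relative interior of $\dom{f}$; this inclusion plus the identity $\phi(1)-\phi(0)=\int_0^1 D(t)\,dt$ for any measurable selection $D(t)\in\partial\phi(t)$ is all you need, but you should run the argument first for $x_0,x_1$ in the relative interior, where $\partial f(x_t)\neq\emptyset$ along the whole segment. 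Second, the extension to general points of $\dom{f}$ cannot be done by lower semicontinuity alone: lsc gives $\liminf f(x_k)\ge f(x)$, whereas the right-hand side of the strong-convexity inequality requires $\limsup f(x_k)\le f(x)$ along your approximating sequence. The standard fix is to approximate radially, $x_k=(1-t_k)x+t_k z$ with $z$ in the relative interior and $t_k\to 0^+$, and invoke the fact that a closed convex function agrees with its radial limits along such segments (Rockafellar, Theorem~7.5 of the convex analysis monograph underlying \cite{Roc70,RW98}). With those two adjustments your sketch closes into a complete proof.
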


Before stating the K{\L} property, we first recall the definition of the following class of desingularizing functions.
\begin{definition}\label{def:desf}
	For $\zeta \in (0,+\infty]$, we define a class of desingularizing functions (denoted by $\varUpsilon_\zeta$) as the set of all continuous concave functions $\phi(\cdot) : [0,\zeta) \to \RR_+$ satisfying the following three properties:
	\begin{enumerate}
		\item [\rm (i)] $\phi(0) =0$;
		\item [\rm (ii)] $\phi(\cdot)$ is continuously differentiable on $(0,\zeta)$;
		\item [\rm (iii)] for all $s\in (0,\zeta)$, $\phi^\prime (s) >0$;
	\end{enumerate}
\end{definition}

For any subset $\mathbb{S}\subset \RR^n$ and any point $x\in\RR^n$, the distance from $x$ to $\mathbb{S}$, denoted by $\dist(x,\mathbb{S})$, is defined as 
$$\dist(x,\mathbb{S})=\inf\,\left\{ \| y-x\|\; |\; y\in\mathbb{S}\right\}.$$
With the above preparations, we now state the K{\L} property (e.g., see \cite[Definition 3.1]{ABRS10}).
\begin{definition}[K{\L} property, K{\L} exponent, and K{\L} function]\label{def:KL}
	Let $f(\cdot): \RR^n \to (-\infty,+\infty]$ be a proper and lower semicontinuous function.
	\begin{enumerate}
		\item [\rm (i)] We say that the function $f(\cdot)$ has the K{\L} property at 
		$$\bar{x} \in \dom{\partial f} := \left\{ x\in \RR^n\;|\; \partial f(x)  \ne \emptyset \right\}$$
		if there exist $\zeta \in (0,+\infty]$, a neighborhood $\mathcal{N}$ of $\bar{x}$, a continuous concave function $\phi(\cdot)\in \varUpsilon_\zeta$, and for all $x \in \mathcal{N} \cap \left\{y \in \RR^n\;|\; f(\bar{x}) < f(y) < f(\bar{x}) + \zeta \right\}$, the following K{\L} inequality holds, i.e.,
		\begin{equation}\label{eq:KL}
		\phi^\prime \left(f(x)-f(\bar{x})\right) \dist\left(0,\partial f(x)\right) \ge 1.
		\end{equation}
		\item [\rm (ii)] If $f(\cdot)$ satisfies the above K{\L} property at each point of $\dom{\partial f}$ and $\phi(\cdot)$ in \eqref{eq:KL} can be chosen as $\phi(s)=as^{1-\vartheta}$ for some $\vartheta\in[0,1)$ and $a>0$, then we say that $f(\cdot)$ satisfies the K{\L} property at $\bar x$ with exponent $\vartheta$. 
		\item[\rm (iii)] If $f(\cdot)$ satisfies the K{\L} property at all points in $\dom{\partial f}$, we say that $f(\cdot)$ is a K{\L} function. Moreover, if $f(\cdot)$ satisfies the K{\L} property with exponent $\vartheta\in[0,1)$ at all points in $\dom{\partial f}$, we say that $f(\cdot)$ is a K{\L} function with exponent $\vartheta$.
	\end{enumerate}
\end{definition}

To further analyze the convergence of the proposed algorithm to a critical point of \eqref{eq:problem}, we recall the uniformized K{\L} property introduced in \cite[Lemma 6]{BST14}, which is a more general K{\L} property.
\begin{lemma}[uniformized K{\L} property]\label{lem:UKL}
	Let $\Gamma$ be a compact set and let $f(\cdot): \RR^n \to (-\infty,+\infty]$ be a proper and lower semicontinuous function. Assume that $f(\cdot)$ is constant on $\Gamma$ and satisfies the K{\L} property at each point of $\Gamma$. Then, there exist $\varepsilon >0$, $\zeta>0$ and a continuous concave function $\phi\in \varUpsilon_\zeta$ such that for each $\widehat{x} \in \Gamma$ the following K{\L} inequality
	\begin{equation*}
	\phi^\prime\left(f(x)-f(\widehat{x}) \right) \dist \left(0,\partial f(x)\right) \ge 1,
	\end{equation*}
	holds for any
	$	x \in \left\{ y\in \RR^n\;|\; \dist (y,\Gamma) <\varepsilon \right\} \cap \left\{ y \in \RR^n\;|\; f(\widehat{x}) < f(y) < f(\widehat{x}) + \zeta\right\}.$
\end{lemma}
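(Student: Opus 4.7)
The plan is to combine the pointwise K{\L} property at each point of $\Gamma$ with the compactness of $\Gamma$ via a finite-covering argument, and then manufacture a single desingularizing function by summing the local ones. A simplification that should be exploited throughout is the hypothesis that $f(\cdot)$ is constant on $\Gamma$: writing $c := f(\widehat{x})$ for any $\widehat{x} \in \Gamma$, the claimed inequality no longer depends on the choice of $\widehat{x}$, so it suffices to prove the single statement
\begin{equation*}
\phi^\prime\!\left(f(x) - c\right)\,\dist\!\left(0,\partial f(x)\right) \ge 1
\end{equation*}
for all $x$ in an $\varepsilon$-tube around $\Gamma$ satisfying $c < f(x) < c + \zeta$.

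Concretely, for each $\bar{x} \in \Gamma$ I would invoke Definition~\ref{def:KL} to obtain parameters $\varepsilon_{\bar{x}} > 0$, $\zeta_{\bar{x}} > 0$, and $\phi_{\bar{x}} \in \varUpsilon_{\zeta_{\bar{x}}}$ such that the K{\L} inequality holds on $B(\bar{x}, \varepsilon_{\bar{x}}) \cap \{ c < f < c + \zeta_{\bar{x}} \}$ (the neighborhood $\mathcal{N}$ may be shrunk to an open ball). The family $\{ B(\bar{x}, \varepsilon_{\bar{x}}/2) \}_{\bar{x} \in \Gamma}$ is an open cover of the compact set $\Gamma$, so it admits a finite subcover associated to points $\bar{x}_1, \ldots, \bar{x}_N$ with parameters $\varepsilon_i, \zeta_i, \phi_i$. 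I would then define
\begin{equation*}
\varepsilon := \min_{1 \le i \le N} \frac{\varepsilon_i}{2}, \qquad \zeta := \min_{1 \le i \le N} \zeta_i, \qquad \phi(s) := \sum_{i=1}^{N} \phi_i(s),
\end{equation*}
noting that $\phi$ inherits continuity on $[0,\zeta)$, continuous differentiability on $(0,\zeta)$, concavity, the normalization $\phi(0) = 0$, and strict positivity of $\phi^\prime$ from the $\phi_i$, so $\phi \in \varUpsilon_\zeta$.

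To finish, take any $x$ with $\dist(x, \Gamma) < \varepsilon$ and $c < f(x) < c + \zeta$. Pick $y \in \Gamma$ with $\|x - y\| < \varepsilon$; by the finite subcover, $y \in B(\bar{x}_j, \varepsilon_j/2)$ for some index $j$, whence
\begin{equation*}
\|x - \bar{x}_j\| \le \|x - y\| + \|y - \bar{x}_j\| < \varepsilon + \tfrac{\varepsilon_j}{2} \le \varepsilon_j,
\end{equation*}
so $x$ lies in the local K{\L} neighborhood at $\bar{x}_j$. Since $\zeta \le \zeta_j$, the local K{\L} inequality yields $\phi_j^\prime(f(x) - c)\dist(0,\partial f(x)) \ge 1$. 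Because every $\phi_i^\prime$ is positive on $(0,\zeta)$, we have $\phi^\prime(f(x) - c) = \sum_{i=1}^N \phi_i^\prime(f(x) - c) \ge \phi_j^\prime(f(x) - c)$, giving the uniform inequality. The only mildly delicate point, which I expect to be the main bookkeeping obstacle rather than a deep issue, is ensuring the domains of the $\phi_i$ are compatible so that $\phi$ is legitimately an element of $\varUpsilon_\zeta$; taking $\zeta$ to be the minimum of the $\zeta_i$ handles this cleanly, and the halving trick $\varepsilon_i \mapsto \varepsilon_i/2$ in the cover is what lets a ball around any point of $\Gamma$ be swallowed by some ball from the subcover.
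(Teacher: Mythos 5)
Your proposal is correct; note that the paper does not actually prove this lemma but simply recalls it from \cite[Lemma 6]{BST14}, and your argument is a faithful reconstruction of the standard covering proof given there: pointwise K{\L} neighborhoods shrunk to balls, a finite subcover by compactness (with the $\varepsilon_i/2$ halving trick to absorb the $\varepsilon$-tube), $\zeta$ taken as the minimum of the local $\zeta_i$, and $\phi$ built as the sum of the local desingularizing functions so that $\phi'\ge\phi_j'$ termwise. Nothing further is needed.
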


Interestingly, it is well documented in \cite{BDL07,BDLS07,BST14} that a broad class of real-world optimization problems satisfy the K{\L} property, since most of them have semi-algebraic objective functions or semi-algebraic constraint sets. Therefore, by the semi-algebraic family \cite[Example 2]{BST14}, we can see that all problems in Section \ref{Sec:Exp} satisfy the K{\L} property.

Below, we recall the famous descent lemma in optimization (e.g., see \cite[Lemma 5.7]{Beck17}).
\begin{lemma}[descent lemma]\label{lem:delem}
	Let $f(\cdot):\RR^n\to(-\infty,\infty]$ be an $L$-smooth function  ($L\geq 0$) 
	over a given convex set $\mathbb{S}$, i.e., $f(\cdot)$ is differentiable over $\mathbb{S}$ and satisfies
	$$\| \nabla f(x) -\nabla f(y)\| \leq L \|x-y\|, \quad \forall x,y \in \mathbb{S}.$$
	Then, for any $x,y\in\mathbb{S}$, we have 
	$$f(y)\leq f(x) + \langle \nabla f(x),y-x\rangle + \frac{L}{2}\|x-y\|^2.$$
\end{lemma}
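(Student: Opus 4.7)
The plan is to invoke the fundamental theorem of calculus along the line segment joining $x$ and $y$, which lies entirely in $\mathbb{S}$ by convexity, and then control the resulting integral using the Lipschitz continuity of $\nabla f$. This is the standard route for the descent lemma and it exploits exactly the two hypotheses at hand: convexity of $\mathbb{S}$ (so that the segment $\{x + t(y-x) : t \in [0,1]\}$ stays in $\mathbb{S}$ where $f$ is differentiable and its gradient is $L$-Lipschitz) and the $L$-smoothness bound.

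Concretely, I would first define the auxiliary univariate function $\varphi(t) = f(x + t(y-x))$ on $[0,1]$. Since $f$ is differentiable on $\mathbb{S}$ and the segment lies in $\mathbb{S}$, the chain rule yields $\varphi'(t) = \langle \nabla f(x + t(y-x)), y-x\rangle$. Integrating $\varphi'$ from $0$ to $1$ gives
\begin{equation*}
f(y) - f(x) = \int_0^1 \langle \nabla f(x + t(y-x)), y-x \rangle \, dt.
\end{equation*}
Subtracting $\langle \nabla f(x), y-x\rangle = \int_0^1 \langle \nabla f(x), y-x\rangle \, dt$ from both sides then yields
\begin{equation*}
f(y) - f(x) - \langle \nabla f(x), y-x\rangle = \int_0^1 \langle \nabla f(x+t(y-x)) - \nabla f(x), y-x\rangle \, dt.
\end{equation*}

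Next, I would estimate the right-hand side by applying Cauchy--Schwarz inside the integral to get $\lno \nabla f(x+t(y-x)) - \nabla f(x) \rno \cdot \lno y-x\rno$, and then invoke the Lipschitz assumption to bound $\lno \nabla f(x+t(y-x)) - \nabla f(x) \rno \leq L \lno t(y-x)\rno = L t \lno y-x\rno$. Plugging this in and evaluating $\int_0^1 t\,dt = 1/2$ produces the desired upper bound $(L/2)\lno y-x\rno^2$ and completes the argument after rearrangement.

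No step should pose a real obstacle, as each is a textbook manipulation; the only thing to be mildly careful about is justifying that the segment lies in $\mathbb{S}$ so that differentiability and the Lipschitz bound are available at every intermediate point, which is immediate from the convexity of $\mathbb{S}$. If one wished to avoid the integral representation, an equivalent route would be to apply the mean value theorem to the scalar function $\varphi(t) - t\langle \nabla f(x), y-x\rangle$ and bound its derivative by $L t \lno y-x\rno^2$; but the integral proof sketched above is cleaner and more commonly cited.
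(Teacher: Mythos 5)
Your proof is correct and is exactly the standard argument: the paper does not prove this lemma itself but cites it as Lemma 5.7 of Beck's \emph{First-Order Methods in Optimization}, whose proof is the same segment-parametrization, fundamental-theorem-of-calculus, Cauchy--Schwarz, and Lipschitz-bound computation you give. Nothing is missing; the convexity of $\mathbb{S}$ is used precisely where you say it is.
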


\begin{definition}\label{def:conf}
	Let $f(\cdot):\RR^n \to [-\infty,+\infty]$ be an extended-real-valued function. The conjugate function of $f(\cdot)$ is defined by
	\begin{equation*}
	f^*(y) = \sup_{x\in \RR^n} \left\{ \langle y, x \rangle - f(x) \right\}.
	\end{equation*}
\end{definition}

It is well-know from \cite[Chapter 4]{Beck17} that if $f(\cdot)$ is proper, lower semicontinuous and convex, then $f^*(\cdot)$ is also proper, lower semicontinuous and convex. Furthermore, it is easy to see from Definition \ref{def:conf} that
\begin{equation}\label{eq:FY}
f(x)  + f^*(y) \ge \langle x,y \rangle,
\end{equation}
and in particular, the equality holds if and only if $y \in \partial f(x) $. Additionally, for any $x$ and $y$, one has $y \in \partial f(x) $ if and only if $x \in \partial f^*(y)$ (e.g., see \cite[Theorem 4.20]{Beck17}). In the literature, the inequality \eqref{eq:FY} is usually called Fenchel-Young inequality, which will be useful for our algorithmic design.

\begin{definition}[proximal mapping]\label{def:prox}
	Given a function $\theta(\cdot):\RR^n\to (-\infty,\infty]$ and a scalar $t>0$, the proximal mapping of $t\theta(\cdot)$ is the operator given by
	\begin{equation}\label{eq:prox}
	\prox_{t\theta}({\bm a})=\arg\min_{x\in\RR^n}\left\{t\theta(x)+\frac{1}{2}\|x-{\bm a}\|^2\right\} \quad \text{for any } {\bm a}\in\RR^n.
	\end{equation}
\end{definition}

When $\theta(\cdot)$ is a proper lower semicontinuous and convex function, the $\prox_{t\theta}(\cdot)$ is well-defined, i.e., the $\prox_{t\theta}({\bm a})$ is a singleton for any ${\bm a}\in\RR^n$.
In particular, when 
\begin{itemize}
	\item $\theta(x)=\mathcal{I}_{\mathbb{S}}(x)$, where $\mathcal{I}_{\mathbb{S}}(x)$ is the indicator function associated to the nonempty closed convex set $\mathbb{S}\subseteq \RR^n$, which is defined 
	$$\mathcal{I}_{\mathbb{S}}(x)=\left\{\begin{array}{ll} 0, & \text{ if } x\in\mathbb{S}, \\ \infty, & \text{ otherwise,}
	\end{array}\right.$$ then the proximal operator \eqref{eq:prox} reduces to the orthogonal projection onto $\mathbb{S}$, i.e.,
	$$\prox_{\mathcal{I}_{\mathbb{S}}}({\bm a})=\arg\min\left\{\|x-{\bm a}\|^2\;|\; x\in\mathbb{S}\right\}=\Proj_{\mathbb{S}}({\bm a}).$$
	\item $\theta(x)=\|x\|_1$, then the proximal operator \eqref{eq:prox} becomes the well-known shrinkage (or soft-thresholding) operator (see \cite{CW05}), i.e.,
	$$\prox_{t\|\cdot\|_1}({\bm a})=\shrink({\bm a},t)\equiv \text{\rm sign}({\bm a})\odot \max\{|{\bm a}|-t,0\},$$
	where `$\text{\rm sign}(\cdot)$' is the sign function, and `$\odot$' represents the component-wise product.
	\item $\theta(X)=\|X\|_*$ is the nuclear norm of matrix $X$, then the proximal operator \eqref{eq:prox} corresponds to the singular value thresholding operator (see \cite{CCS10}), i.e.,
	$$\prox_{t\|\cdot\|_*}(A)=\arg\min_{X}\left\{t\|X\|_*+\frac{1}{2}\|X-A\|^2_F\right\}=\SVT(A,t)\equiv U\shrink(\Sigma,t)V^\top,$$
	where $U\Sigma V^\top$ is the singular value decomposition of matrix $A$.
\end{itemize}

To end this section, we recall the definition of Bregman distance and its properties (see \cite{Bregman-1967} and also \cite[Chapter 9]{Beck17}), which are useful tools for our algorithmic design and convergence analysis.

\begin{definition}[Bregman distance]\label{def:Bregman}
	Let $\psi(\cdot): \RR^n \to (-\infty, + \infty]$ be a proper, closed and convex function that is differentiable over $\dom{\partial \psi}$. The Bregman distance associated with the kernel $\psi(\cdot)$ is the function $\B_\psi(\cdot,\cdot):\dom{\psi}\times \dom{\partial \psi}\to \RR$ given by
	\begin{equation*}
	\B_\psi(x,y) = \psi(x) - \psi(y) - \langle \nabla \psi(y), x-y\rangle,\quad \forall x \in \dom{\psi}, y \in \intset{\dom{\psi}}.
	\end{equation*}
\end{definition}

Generally speaking, the Bregman distance does not necessarily enjoy the symmetry and the triangle inequality property. However, such a measurement covers the standard Euclidean distance as its special case, i.e., $ \B_\psi(x,y)=\frac{1}{2}\|x-y\|^2$ if the kernel function is taken as $\psi(\cdot)=\frac{1}{2}\|\cdot\|^2$. More generally, letting $M$ be a positive definite matrix and $\psi(\cdot)=\frac{1}{2}\|\cdot\|_M^2$, we obtain $\B_\psi(x,y)=\frac{1}{2}\|x-y\|^2_M$, which is useful for solving quadratic programming. Besides, when taking the kernel function as $\psi(x)=\sum_{i=1}^n x_i \log x_i$, the Bregman distance is specified as 
$$\B_\psi(x,y)=\sum_{i=1}^n x_i \log \frac{x_i}{y_i} + y_i-x_i, \quad \forall x\in\RR^n_+,\;\forall y\in\RR^n_{++},$$
which plays an important role in solving optimization problems over unit simplex 
\begin{equation}\label{eq:simplex}
\Delta^n:=\left\{\;x\in\RR^n\;\Big{|}\; \sum_{i=1}^{n}x_i=1,\; x_i\geq 0,i=1,2,\ldots,n\;\right\},
\end{equation}
since such a Bregman function can simplify the proximal subproblems so that they enjoy closed-form solutions (see \cite{AT09} and our experiments in Section \ref{Sec:bid}). Below, we summarize some properties of Bregman distance (also see \cite{BBC03,Beck17}).

\begin{lemma}\label{lem:Bregman}
	Suppose that $\mathbb{S}\subseteq \RR^n$ is nonempty closed and convex, and the function $\psi(\cdot)$ is proper closed convex and differentiable over $\dom{\partial \psi}$. If $\mathbb{S}\subseteq \dom{\psi}$ and $\psi(\cdot)+\mathcal{I}_{\mathbb{S}}(\cdot)$ is $\varrho$-strongly convex ($\varrho>0$), then the Bregman distance $\B_\psi(\cdot,\cdot)$ associated with $\psi(\cdot)$ has the following properties:
	\begin{itemize}
		\item[\rm (i)] $\B_{\psi}(x,y)\geq \frac{\varrho}{2}\|x-y\|^2$ for all $x\in\mathbb{S}$ and $y\in\mathbb{S}\cap \dom{\partial \psi}$;
		\item[\rm (ii)] Let $x\in\mathbb{S}$ and $y\in\mathbb{S}\cap \dom{\partial \psi}$. Then $\B_{\psi}(x,y)\geq 0$, and in particular, the equality holds if and only if $x=y$;
		\item[\rm (iii)] If $\nabla \psi$ is Lipschitz continuous with modulus $L_\psi$, it holds that $\B_{\psi}(x,y)\leq \frac{L_\psi}{2}\|x-y\|^2$ for all $x\in\mathbb{S}$ and $y\in\mathbb{S}\cap \dom{\partial \psi}$.
	\end{itemize}
\end{lemma}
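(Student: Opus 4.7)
The plan is to derive each of the three claims as a direct consequence of results already collected in the preliminaries. The whole lemma amounts to reading off the Bregman distance as the first-order error of $\psi$ at $y$, and then sandwiching this error between a lower quadratic (from strong convexity of $\psi+\mathcal{I}_{\mathbb{S}}$) and, in part~(iii), an upper quadratic (from $L_\psi$-smoothness via the descent lemma).

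For part~(i), I would apply Lemma~\ref{lem:strong}(ii) to the proper closed convex function $f := \psi + \mathcal{I}_{\mathbb{S}}$, which is $\varrho$-strongly convex by hypothesis. The key observation is that for $y \in \mathbb{S}\cap\dom{\partial\psi}$ the gradient $\nabla\psi(y)$ belongs to $\partial f(y)$: indeed, convexity of $\psi$ yields $\psi(z)\geq \psi(y)+\langle \nabla\psi(y),z-y\rangle$ for every $z\in\RR^n$, and since $\mathcal{I}_{\mathbb{S}}(y)=0$ and $\mathcal{I}_{\mathbb{S}}(z)\geq 0$, the same inequality holds for $f$. Choosing $\xi=\nabla\psi(y)$ in Lemma~\ref{lem:strong}(ii) and evaluating at $x\in\mathbb{S}$ (so that $\mathcal{I}_{\mathbb{S}}(x)=0$) gives
\[
\psi(x)\geq \psi(y)+\langle \nabla\psi(y),x-y\rangle+\frac{\varrho}{2}\|x-y\|^2,
\]
which rearranges to $\B_\psi(x,y)\geq \frac{\varrho}{2}\|x-y\|^2$, as desired.

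Part~(ii) follows from part~(i) immediately: since $\varrho>0$, the estimate in (i) gives $\B_\psi(x,y)\geq 0$, and if the left-hand side vanishes then $\|x-y\|^2\leq 0$, forcing $x=y$. The reverse implication is trivial from $\B_\psi(x,x) = \psi(x)-\psi(x)-\langle \nabla\psi(x),0\rangle = 0$.

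For part~(iii), I would invoke the descent lemma (Lemma~\ref{lem:delem}) applied to $\psi$, whose gradient is Lipschitz with modulus $L_\psi$ over the convex set on which the estimate is asserted. For $x\in\mathbb{S}$ and $y\in\mathbb{S}\cap\dom{\partial\psi}$ this yields
\[
\psi(x)\leq \psi(y)+\langle \nabla\psi(y),x-y\rangle+\frac{L_\psi}{2}\|x-y\|^2,
\]
which is exactly $\B_\psi(x,y)\leq \frac{L_\psi}{2}\|x-y\|^2$. No step is really an obstacle here; the only point that warrants explicit justification is the inclusion $\nabla\psi(y)\in\partial(\psi+\mathcal{I}_{\mathbb{S}})(y)$ used in step~(i), which is handled by the convex subgradient inequality argument above.
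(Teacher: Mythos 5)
Your proof is correct. The paper itself offers no proof of this lemma---it is stated as a summary of known facts with a pointer to the literature (Bregman's original paper and \cite[Chapter 9]{Beck17})---so there is no in-paper argument to compare against; your derivation is the standard one and fills the gap cleanly. In particular, you correctly identify and justify the one nontrivial step, namely that $\nabla\psi(y)\in\partial\big(\psi+\mathcal{I}_{\mathbb{S}}\big)(y)$ for $y\in\mathbb{S}\cap\dom{\partial\psi}$, which is what lets you apply Lemma~\ref{lem:strong}(ii) to the strongly convex function $\psi+\mathcal{I}_{\mathbb{S}}$ while still reading off the plain Bregman distance of $\psi$ on $\mathbb{S}$; parts (ii) and (iii) then follow exactly as you say from (i) and from the descent lemma, respectively.
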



In the literature, the Bregman distance has been widely used to design efficient algorithms for optimization problems, we here refer the reader to \cite{BBC03,BR21,CZ92,Eck93,MOPS20,Teb18} and references therein for more properties and applications.

\section{The Unified Bregman Alternating Minimization Algorithm} \label{Sec:Alg}
In this section, we first introduce the unified Bregman alternating minimization algorithm for generalized DC programming \eqref{eq:problem}. Then, we will show that our algorithmic framework covers some state-of-the-art algorithms. 

As we know, those seminal DC algorithms for DC programming often require the second DC parts being differentiable. However, many real-world problems usually possess two nonsmooth DC components. In this paper, we follow the spirit of majorization minimization and then employ the Fenchel-Young inequality \eqref{eq:FY} to approximate $-g_1(x)$ and $-g_2(y)$ iteratively. More specifically, at the $k$-th iterate $(x^k,y^k)$, we always have 
$$g_1^*(\xi)-\langle x^k, \xi\rangle \geq -g_1(x^k)\quad \text{and}\quad g_2^*(\eta)-\langle y^k, \eta\rangle \geq -g_2(y^k).$$
Comparatively, the above approximations provide us a flexible way to exploit the favorable structure of the conjugate functions of $g_1(x)$ and $g_2(y)$ for some real-world applications. 

In addition, the coupling function $h(x,y)$ often makes the $x$- and $y$-subproblems difficult in the sense that both subproblems neither have closed-form solutions, even when the proximal operators of $f_1(x)$ and $f_2(y)$ can be expressed explicitly, nor can be accurately calculated via fast solvers. So, we employ the popular linearization strategy to approximate the underlying subproblems. However, we notice that the coupling function $h(x,y)$ is assumed to have two parts, i.e., $h(x,y)=h^+(x,y)-h^-(x,y)$. In particular, when $h(x,\cdot)$ (resp. $h(\cdot,y)$) is a DC function on $\RR^m$ (resp. $\RR^n$) for each $x\in\RR^n$ (resp. $y\in\RR^m$), we call $h(\cdot,\cdot)$ a generalized partial DC function (see \cite{PHLH22}). Here, we just assume that $h(x,y)$ consists of two parts, but do not strictly assume $h(x,y)$ being a generalized partial DC function. For this situation, directly linearizing $h(x,y)$ will ignore its favorable split nature, which encourages us to consider the linearization approximation to $h^-(x,y)$. Actually, we will show below that the split form of $h(x,y)$ is of benefit for designing customized variants of our algorithm. 

As shown in the literature, the uniqueness of optimal solutions to $x$- and $y$-subproblems is of importance for convergence analysis. Here we shall incorporate Bregman proximal regularization terms into the $x$- and $y$-subproblems, thereby ensuring that both subproblems are strongly convex as long as the proximal parameters can be chosen appropriately. Consequently, on the one hand, each subproblem has a unique solution. On the other hand, the Bregman proximal terms provide us a flexible framework to design customized algorithms, and meanwhile to understand some novel existing algorithms. Our unified algorithm is formally described in Algorithm \ref{alg:proposed}.

\begin{algorithm}
	\caption{Unified Bregman Alternating Minimization Algorithm for \eqref{eq:problem}.}\label{alg:proposed}
	\begin{algorithmic}
		\STATE{Take a starting point $(x^0,y^0) \in \RR^n \times \RR^m$, and strongly convex functions $\psi_k(\cdot)$ and $\varphi_k(\cdot)$.}
		\FOR{$k=0,1,2,\cdots$}
		\STATE{Compute $\xi^{k+1}$ and $\eta^{k+1}$:
			\begin{equation}\label{eq:subgradient}
			\xi^{k+1} \in \arg\min_{\xi \in \RR^n} \left\{g_1^*(\xi) - \langle x^k, \xi \rangle\right\} \quad \text{and}\quad \eta^{k+1}  \in \arg\min_{\eta \in \RR^m} \left\{ g_2^*(\eta) - \langle y^k,\eta \rangle\right\}.
			\end{equation}}
		\STATE{Update $x^{k+1}$ and $y^{k+1}$ in an alternating order as follows:
			\begin{align}
			x^{k+1}&= \arg\min_{x \in \RR^n} \left\{ f_1(x)+h^+(x,y^k)- \langle x-x^k,  \uu^k\rangle + \B_{\psi_k}(x,x^k)\right\}, \label{algeq:update_x}\\ 
			y^{k+1}& = \arg\min_{y \in \RR^m} \left\{f_2(y)+h^+(x^{k+1},y) -\langle y-y^k, \vv^k \rangle + \B_{\varphi_k}(y,y^k)\right\}, \label{algeq:update_y}
			\end{align}
			where $\uu^k=\xi^{k+1} + \nabla_x h^-(x^k,y^k)$ and $\vv^k=\eta^{k+1} + \nabla_y h^-(x^{k+1},y^k)$.
		}
		\ENDFOR
	\end{algorithmic}
\end{algorithm}

\begin{remark}\label{rem31}
Roughly speaking, both subproblems \eqref{algeq:update_x} and \eqref{algeq:update_y} dominate the main computational cost of Algorithm \ref{alg:proposed}. When taking some specific Bregman kernel functions such as $\psi_k(\cdot)=\varphi_k(\cdot)=\frac{1}{2}\|\cdot\|^2$, we can see that \eqref{algeq:update_x} and \eqref{algeq:update_y} amount to evaluating the following proximal operator:
\begin{equation*}
		\prox_{f+\hat{h}}({\bm a}^k)=\arg\min_{w}\left\{f(w)+\hat{h}(w)+\frac{1}{2}\|w-{\bm a}^k\|^2\right\},
\end{equation*}
where $f(\cdot)$ represents $f_1(\cdot)$ or $f_2(\cdot)$, $\hat{h}(\cdot)$ corresponds to $h^+(\cdot,y^k)$ or $h^+(x^{k+1},\cdot)$, and ${\bm a}^k$ is a point associated with $x^k$ or $y^k$. In this situation, the efficiency of Algorithm \ref{alg:proposed}, to a large extent, depends on the evaluation of the proximal operator $\prox_{f+\hat{h}}({\bm a}^k)$. Generally, the appearance of $\hat{h}(\cdot)$ coupling with a given $y^k$ or $x^{k+1}$ possibly leads to the proximal operator $\prox_{f+\hat{h}}({\bm a}^k)$ losing its closed-form solution. Therefore, how to maximally explore the explicit proximal operators is crucial for the application of Algorithm \ref{alg:proposed}. Indeed, we can easily handle the difficulty caused by $\hat{h}(\cdot)$ in many cases. Note that we do not impose strict limitations on $h^+(x,y)$ and $h^-(x,y)$. Therefore, we can freely rewrite the form of $h(x,y)$ according to actual needs such that $\hat{h}(\cdot)$ vanishes from $\prox_{f+\hat{h}}({\bm a}^k)$ for the purpose of exploiting the explicit form of $\prox_{f}({\bm a}^k)$ (e.g., see Remarks \ref{rem:sc4}-\ref{rem:sc3} and applications in Section \ref{Sec:Exp}). Of course, when the evaluation of proximal operator $\prox_{f+\hat{h}}({\bm a}^k)$ (or $\prox_{f}({\bm a}^k)$) is inevitable without a closed-form solution, we suggest employing some solvers equipped with inexact strategies (e.g., \cite{BTB23,Beck17}) to improve the performance of Algorithm \ref{alg:proposed}.
\end{remark}

Below, we present some remarks (i.e., Remarks \ref{rem:sc4}-\ref{rem:sc3}) to show that our Algorithm \ref{alg:proposed} can produce some existing novel iterative schemes, when reformulating some specific nonconvex optimization models as special cases of \eqref{eq:problem}. Here, we shall emphasize that one of the main goals of these remarks is to help us understand those state-of-the-art algorithms tailored for nonsmooth and nonconvex programming (e.g., \cite{ABRS10,BST14,NT19,PHLH22,SSC03,TFT22})  in a unified framework, so that those algorithms can be concatenated together via Algorithm \ref{alg:proposed}. Another main goal is to tell the reader how to design customized variants of Algorithm \ref{alg:proposed} for solving some real-world problems. So, we only show how to employ Algorithm \ref{alg:proposed} to derive those existing iterative schemes developed in the literature. It is noteworthy that our convergence results are not necessarily sufficient for the resulting algorithms, and the reader is referred to the original papers for their tightest convergence analysis or is requested to reanalyze their tighter convergence when the required conditions could be weakened.
 
\begin{remark}\label{rem:sc4}
	When the generalized DC programming \eqref{eq:problem} reduces to the standard DC programming of the form
	\begin{equation}\label{eq:sDC}
	\min_{x\in\RR^n} \left\{ \theta_1(x):=f_1(x)-g_1(x)\right\},
	\end{equation}
	it is clear that, by setting $\psi_k(x)=0$, an application of Algorithm \ref{alg:proposed} to \eqref{eq:sDC} yields the iterative scheme of the standard DC algorithm \cite{LTPD18,PDLT97}. When we take $\psi_k(x)=\frac{c_k}{2}\|x\|^2$, our Algorithm \ref{alg:proposed} for \eqref{eq:sDC} reads as
	\begin{equation*}
	x^{k+1}=\arg\min_{x\in\RR^n}\left\{f_1(x)-\langle x, \xi^{k+1}\rangle +\frac{c_k}{2}\|x-x^k\|^2\right\}\quad \text{ with }\quad \xi^{k+1}\in\partial g_1(x^k),
	\end{equation*}
	which immediately coincides with the iterative scheme of the method introduced in \cite{SSC03}. Moreover, when we further consider a more general case of \eqref{eq:sDC} as follows
	\begin{equation}\label{eq:gDC}
	\min_{x\in\RR^n} \left\{ \theta(x):=f_1(x)-g_1(x) + h_1(x)\right\},
	\end{equation}
	we can rewrite \eqref{eq:gDC} as 
	\begin{equation*}
	\min_{x\in\RR^n} \left\{ \Phi(x,y):=f_1(x)-g_1(x) - \left(- h_1(x) \right)\right\},
	\end{equation*}
	where the $h(x,y)$ in \eqref{eq:problem} can be specified as $h(x,y)=0- \left(- h_1(x) \right)$. In this situation, when assuming that $h_1(x)$ is differentiable, we have $\nabla_x h^-(x,y)=-\nabla h_1(x)$, and Algorithm \ref{alg:proposed} immediately reads as
\begin{equation*}
	x^{k+1}=\arg\min_{x\in\RR^n}\left\{f_1(x) + \langle x, \nabla h_1(x^k) -\xi^{k+1}\rangle +\B_{\psi_k}(x,x^k)\right\}\quad \text{ with }\quad \xi^{k+1}\in\partial g_1(x^k),
	\end{equation*}
	which corresponds to the iterative scheme of the method introduced in \cite{TFT22}. Particularly, we can also reformulate some constrained convex optimization problems as the form of \eqref{eq:gDC}, then the iterative schemes of some classical methods, such as proximal (projected) gradient method and mirror descent method (e.g., see \cite[Chapters 8-10]{Beck17}), can be derived under the framework of our Algorithm \ref{alg:proposed}.
\end{remark}

\begin{remark}\label{rem:sc1}
	When considering a simplified version of model \eqref{eq:problem} without the first two DC parts but with setting $h^-(x,y)=-f_1(x)-f_2(y)$ and assuming that $f_1(x)$ and $f_2(y)$ are continuously differentiable, model \eqref{eq:problem} is specified as
	\begin{equation*}
	\min_{x\in \RR^n, y\in \RR^m} \; \left\{\;\Phi(x,y) :=h^+(x,y)-\underbrace{\left( -f_1(x)-f_2(y)\right)}_{h^-(x,y)}\;\right\}.
	\end{equation*}
	Clearly, $\uu^k=\nabla_x h^-(x^k,y^k) = -\nabla f_1(x^k)$ and $\vv^k=\nabla_y h^-(x^{k+1},y^k)=-\nabla f_2(y^k)$. When the Bregman proximal terms $\B_{\psi_k}(x,x^k)$ and $\B_{\varphi_k}(y,y^k)$ are further taken as $\frac{1}{2\tau}\|x-x^k\|^2$ and $\frac{1}{2\sigma}\|y-y^k\|^2$ (i.e., by setting $\psi_k(x)=\frac{1}{2\tau}\|x\|^2$ and $\varphi_k(y)=\frac{1}{2\sigma}\|y\|^2$), respectively, our Algorithm \ref{alg:proposed} immediately is specified as
	\begin{equation*}
	\left\{\begin{aligned}
	x^{k+1}&= \arg\min_{x \in \RR^n} \left\{ h^+(x,y^k)+ \langle x,  \nabla f_1(x^k) \rangle + \frac{1}{2\tau}\|x-x^k\|^2 \right\}, \\ 
	y^{k+1}& = \arg\min_{y \in \RR^m} \left\{h^+(x^{k+1},y) +\langle y, \nabla f_2(y^k) \rangle + \frac{1}{2\sigma}\|y-y^k\|^2\right\},  
	\end{aligned}\right.
	\end{equation*}
	which is precisely the same as the iterative scheme of the so-named Alternating Structured-Adapted Proximal Gradient Descent (ASAPGD) algorithm proposed in \cite{NT19}. Actually, we can derive some other variants of the ASAPGD algorithm by taking different Bregman proximal terms so that the potentially favorable structure of $h^+(x,y)$ can be efficiently exploited.
\end{remark}

\begin{remark}\label{rem:sc2}
	As discussed in Remark \ref{rem:sc1}, when we consider the case of \eqref{eq:problem} with the form
	\begin{equation*}\label{eq:pv2}
	\min_{x\in \RR^n, y\in \RR^m} \; \left\{\;h(x,y) :=h^+(x,y)-h^-(x,y)\;\right\},
	\end{equation*}
	and assume $h(x,y)$ being a generalized partial DC function, our Algorithm \ref{alg:proposed} immediately reduces to 
	\begin{equation*}
	\left\{\begin{aligned}
	x^{k+1}&= \arg\min_{x \in \RR^n} \left\{ h^+(x,y^k)- \langle x,   \nabla_x h^-(x^k,y^k)\rangle + \B_{\psi_k}(x,x^k)\right\}, \\ 
	y^{k+1}& = \arg\min_{y \in \RR^m} \left\{h^+(x^{k+1},y) -\langle y, \nabla_y h^-(x^{k+1},y^k) \rangle + \B_{\varphi_k}(y,y^k)\right\}.
	\end{aligned}\right.
	\end{equation*}
	Apparently, when setting $\psi_k(x)=0$ and $\varphi_k(y)=0$, the resulting iterative scheme is essentially the same as the one proposed in the most recent work \cite{PHLH22} when dealing with differentiable cases. More generally, reformulating our model \eqref{eq:problem} as
	\begin{equation}\label{eq:gdc}
	\min_{x\in \RR^n, y\in \RR^m} \; \left\{\;\Phi(x,y) :=\underbrace{\left(f_1(x)+f_2(y)+h^+(x,y)\right)}_{h_1(x,y)}-\underbrace{\left(g_1(x)+g_2(y)+h^-(x,y)\right)}_{h_2(x,y)}\;\right\},
	\end{equation}
	then applying the alternating DC algorithm \cite{PHLH22} to \eqref{eq:gdc} also falls into the special case of Algorithm \ref{alg:proposed} without the Bregman proximal terms. Comparatively, our Algorithm \ref{alg:proposed} possesses two universal proximal regularization terms so that the underlying subproblems usually have a unique solution.
\end{remark}

\begin{remark}\label{rem:sc3}
	Interestingly, when considering the special case of \eqref{eq:problem} where $g_1(x)$ and $g_2(y)$ are zeros, we can easily show that our Algorithm \ref{alg:proposed} covers the iterative schemes introduced in \cite{ABRS10} (i.e., \eqref{eq:pAM}) and \cite{BST14} (i.e., \eqref{eq:PALM}) via wisely setting $h(x,y)$ (Here we should mention again that we do not strictly require $f_1(\cdot)$ and $f_2(\cdot)$ being convex, which is a preparation to establish the connection of our algorithm to others). Specifically,
	\begin{itemize}
		\item when $h(x,y)$ is simplified as $h(x,y)=h^+(x,y)$, model \eqref{eq:problem} reduces to 
		\begin{equation*}
		\min_{x\in \RR^n, y\in \RR^m} \; \left\{\;\Phi(x,y) :=f_1(x)+f_2(y) +h(x,y) \;\right\}.
		\end{equation*}
		By taking $\psi_k(x) = \frac{c_k}{2}\|x\|^2$ and $\varphi_k(y) = \frac{d_k}{2}\|y\|^2$, Algorithm \ref{alg:proposed} becomes 
		\begin{equation*}
		\left\{\begin{aligned}
		x^{k+1}&= \arg\min_{x \in \RR^n} \left\{ f_1(x) +h(x,y^k) + \frac{c_k}{2}\|x-x^k\|^2\right\}, \\ 
		y^{k+1}& = \arg\min_{y \in \RR^m} \left\{f_2(y) + h(x^{k+1},y) +  \frac{d_k}{2}\|y-y^k\|^2\right\},
		\end{aligned}\right.
		\end{equation*}
		which is the algorithm introduced in \cite{ABRS10}.
		\item when $h(x,y)$ is simplified as $h(x,y)=-h^-(x,y)$, model \eqref{eq:problem} reduces to 
		\begin{equation*}
		\min_{x\in \RR^n, y\in \RR^m} \; \left\{\;\Phi(x,y) :=f_1(x)+f_2(y) - \left(-h(x,y)\right)\;\right\}.
		\end{equation*}
		Similarly, by setting $\psi_k(x) = \frac{c_k}{2}\|x\|^2$ and $\varphi_k(y) = \frac{d_k}{2}\|y\|^2$, Algorithm \ref{alg:proposed} reduces to
		\begin{equation*}
		\left\{\begin{aligned}
		x^{k+1}&= \arg\min_{x \in \RR^n} \left\{ f_1(x) +\langle x,   \nabla_x h(x^k,y^k)\rangle + \frac{c_k}{2}\|x-x^k\|^2\right\}, \\ 
		y^{k+1}& = \arg\min_{y \in \RR^m} \left\{f_2(y) + \langle y, \nabla_y h(x^{k+1},y^k) \rangle +  \frac{d_k}{2}\|y-y^k\|^2\right\},
		\end{aligned}\right.
		\end{equation*}
		which is precisely the same as the PALM algorithm proposed in \cite{BST14}.
	\end{itemize}
	Actually, as discussed above, the flexibility of these Bregman proximal terms allows us to derive more variants under our unified framework.
\end{remark}

\section{Convergence analysis}\label{Sec:Converg}
In this section, we are concerned with the convergence properties of the proposed Algorithm \ref{alg:proposed}. We begin this section with stating some standard assumptions on model \eqref{eq:problem} as assumed in \cite{PS16}, which will be used in convergence analysis.

\begin{assumption} \label{ass:1}
	The components $f_i $ $(i=1,2) $ are proper lower semicontinuous functions, and $g_i $ $(i=1,2)$ are continuous and convex functions.
\end{assumption}

\begin{assumption} \label{ass:2}
	$\inf_{x,y} \; \Phi(x,y) = \inf_{x,y} \; \left\{\theta_1(x)+  \theta_2(y) + h(x,y) \right\}  > -\infty.$
\end{assumption}

\begin{assumption}\label{ass:3}
	The coupling function $h(x,y)= h^+(x,y) - h^-(x,y)$ satisfies the following conditions.
	\begin{itemize}
		\item[\rm (i)] For any fixed $y$, the function $h^-(\cdot,y)$ is $C_{L_1^-(y)}^{1,1}$, namely the partial gradient $\nabla_x h^-(\cdot,y)$ is globally Lipschitz with moduli $L_1^-(y)$, that is
		\begin{equation*}
		\lno \nabla_x h^-(x_1,y) - \nabla_x h^-(x_2,y) \rno \le L_1^-(y)\|x_1-x_2\|, \; \forall x_1,x_2\in\RR^n.
		\end{equation*}
		Likewise, for any fixed $x$, the function $h^-(x,\cdot)$ is assumed to be $C_{L_2^-(x)}^{1,1}$.
		\item[\rm (ii)] For $i=1,2$, there exist $\lambda_i^-$ and $\lambda_i^+>0$ such that
		\begin{equation} \label{eq:Lip-lb}
		\inf\{L_1^-(y^k): k\in \NN \} \ge \lambda_1^- \;\;\text{ and }\;\; \inf\{L_2^-(x^k): k\in \NN \} \ge \lambda_2^-,
		\end{equation}
		and
		\begin{equation} \label{eq:Lip-ub}
		\sup\{L^-_1(y^k): k\in \NN \} \le \lambda_1^+\;\; \text{ and }\;\; \sup\{L_2^-(x^k): k\in \NN \} \le \lambda_2^+.
		\end{equation}
		\item[\rm (iii)] $h(x,y)$ is a continuously differentiable function and its gradient $\nabla h(x,y) = \nabla h^+(x,y)  - \nabla h^-(x,y) $ is Lipschitz continuous on bounded subsets of $\RR^n \times \RR^m$. That is, for each bounded subsets $\mathbb{B}_1 \times \mathbb{B}_2 $ of $\RR^n \times \RR^m$, there exists $L>0$ (or $L^+$ and $L^-$ satisfying $L=L^+ + L^-$) such that
		\begin{equation*}
		\lno \begin{pmatrix} \nabla_x h (x_1,y_1) \\  \nabla_y h (x_1,y_1) \end{pmatrix} - \begin{pmatrix} \nabla_x h(x_2,y_2) \\ \nabla_y h (x_2,y_2) \end{pmatrix}\rno 
		\le L \lno \begin{pmatrix} x_1\\ y_1\end{pmatrix}-\begin{pmatrix} x_2\\ y_2\end{pmatrix} \rno
		\end{equation*}
		for all $(x_i,y_i) \in \mathbb{B}_1 \times \mathbb{B}_2$ $(i =1,2)$.
	\end{itemize}
\end{assumption}

Before the convergence analysis, for notational simplicity, we let
\begin{equation} \label{eq:auxfun}
\Psi(x,\xi,y,\eta) = f_1(x) + g_1^*(\xi) - \langle x,\xi \rangle + f_2(y) + g_2^*(\eta) - \langle y,\eta \rangle + h(x,y),
\end{equation}
which can be viewed as a surrogate objective function of \eqref{eq:problem} to approximate $\Phi(x,y)$. It then follows from the Fenchel-Young inequality \eqref{eq:FY} that
\begin{equation}\label{eq:PsiPhi}
\Psi(x,\xi,y,\eta) \ge \Phi(x,y).
\end{equation}
In what follows, we denote $\ww:=(x,\xi,y,\eta)$ and $\zz:=(x,y)$ for notational simplicity. Besides, we use $\B_{\psi_k}(x,x^k)=c_k\B_{\psi}(x,x^k)$ and $\B_{\varphi_k}(y,y^k)=d_k\B_{\varphi}(y,y^k)$ to simplify notations in convergence analysis, where $c_k$ and $d_k$ are iteration-varying parameters, both $\psi(\cdot)$ and $\varphi(\cdot)$ are parameter-free Bregman kernel functions. Therefore, we use $\psi_k(\cdot)=c_k\psi(\cdot)$ and $\varphi_k(\cdot)=d_k\varphi(\cdot)$ throughout this paper, and each iteration-varying parameter and the strongly convex constant of the corresponding Bregman kernel function will be merged into one for simplicity. Now, we first show that $\{\Psi(x^k,\xi^k,y^k,\eta^k)\}\equiv \{\Psi(\ww^k)\} $ and $\{\Phi(x^k,y^k)\}\equiv \{\Phi(\zz^k)\}$ are decreasing sequences as $k\to \infty$.

\begin{lemma}\label{lem:descent}
	Suppose Assumptions \ref{ass:1} and \ref{ass:3} hold. Let $\left\{\zz^k := (x^k,y^k) \right\}$ be the sequence generated by Algorithm~\ref{alg:proposed}. Then, both sequences $\left\{\Psi(\ww^k)\right\}$ and $\left\{\Phi(\zz^k)\right\}$ are decreasing, and there exists a constant $\gamma_k>0$ such that
	\begin{equation} \label{eq:dec_cond}
	\Psi(\ww^{k+1}) \leq \Psi(\ww^k) - \frac{\gamma_k}{2}\lno \zz^{k+1}-\zz^k\rno^2.
	\end{equation}
\end{lemma}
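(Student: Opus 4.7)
My plan is to track the surrogate functional $\Psi$ along the four ``micro-steps'' that make up a single outer iteration: $\xi^k\!\to\!\xi^{k+1}$, $\eta^k\!\to\!\eta^{k+1}$, $x^k\!\to\!x^{k+1}$, and $y^k\!\to\!y^{k+1}$. The first two are pure minimizations of Fenchel conjugates, so they only lower $\Psi$; the last two must additionally supply the quadratic descent~\eqref{eq:dec_cond}, which is the heart of the argument.

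Since $\xi^{k+1}$ minimizes $\xi\mapsto g_1^\ast(\xi)-\langle x^k,\xi\rangle$, testing against $\xi^k$ yields $g_1^\ast(\xi^{k+1})-\langle x^k,\xi^{k+1}\rangle\le g_1^\ast(\xi^k)-\langle x^k,\xi^k\rangle$, and analogously for $\eta$. Chaining gives $\Psi(x^k,\xi^{k+1},y^k,\eta^{k+1})\le\Psi(\ww^k)$. Furthermore, $\xi^{k+1}\in\partial g_1(x^k)$ saturates Fenchel--Young, so $g_1^\ast(\xi^{k+1})-\langle x^k,\xi^{k+1}\rangle=-g_1(x^k)$ (and similarly for $\eta$), producing the bridge identity
\[
\Psi(x^k,\xi^{k+1},y^k,\eta^{k+1}) \;=\; \Phi(\zz^k),
\]
which I will use at the end to push monotonicity from $\Psi$ down to $\Phi$.

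For the $x$-sweep I plan two estimates. \emph{(a)} Evaluating the objective of~\eqref{algeq:update_x} at $x^{k+1}$ versus $x^k$ gives
\[
f_1(x^{k+1})+h^+(x^{k+1},y^k)-\langle x^{k+1}-x^k,\uu^k\rangle+\B_{\psi_k}(x^{k+1},x^k) \le f_1(x^k)+h^+(x^k,y^k).
\]
\emph{(b)} Assumption~\ref{ass:3}(i) makes $-h^-(\cdot,y^k)$ $L_1^-(y^k)$-smooth, so the descent lemma (Lemma~\ref{lem:delem}) yields
\[
-h^-(x^{k+1},y^k) \le -h^-(x^k,y^k) - \langle\nabla_x h^-(x^k,y^k),x^{k+1}-x^k\rangle + \tfrac{L_1^-(y^k)}{2}\|x^{k+1}-x^k\|^2.
\]
Substituting $\uu^k=\xi^{k+1}+\nabla_x h^-(x^k,y^k)$ causes the $\nabla_x h^-$ inner products to cancel, while the remaining $\langle\cdot,\xi^{k+1}\rangle$-term is exactly the $\Psi$-increment from the conjugate block. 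Invoking Lemma~\ref{lem:Bregman}(i) in the form $\B_{\psi_k}(x^{k+1},x^k)\ge\tfrac{\varrho_k^\psi}{2}\|x^{k+1}-x^k\|^2$, where $\varrho_k^\psi$ is the strong-convexity modulus of $\psi_k$, I obtain
\[
\Psi(x^{k+1},\xi^{k+1},y^k,\eta^{k+1}) \le \Psi(x^k,\xi^{k+1},y^k,\eta^{k+1}) - \tfrac{\varrho_k^\psi-L_1^-(y^k)}{2}\|x^{k+1}-x^k\|^2.
\]
A symmetric pass over $y^{k+1}$ yields the analogous bound with modulus $\varrho_k^\varphi-L_2^-(x^{k+1})$. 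Chaining the four micro-step inequalities and setting $\gamma_k$ equal to the minimum of these two parenthesized moduli establishes~\eqref{eq:dec_cond}; monotonicity of $\{\Phi(\zz^k)\}$ then follows by combining $\Phi(\zz^{k+1})\le\Psi(\ww^{k+1})$ from~\eqref{eq:PsiPhi} with the telescoping chain that terminates at $\Psi(x^k,\xi^{k+1},y^k,\eta^{k+1})=\Phi(\zz^k)$.

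The one genuinely delicate point---and what I expect to be the real obstacle---is arranging $\gamma_k>0$. Because $f_1$ is allowed to be merely proper lsc (Assumption~\ref{ass:1}) and $h^+(\cdot,y^k)$ is not assumed convex, the linearization of $h^-$ injects a $\tfrac{L_1^-(y^k)}{2}\|x^{k+1}-x^k\|^2$ quadratic that must be absorbed entirely by the Bregman kernel. Hence the proof will hinge on quantifying $\varrho_k^\psi$ to dominate $L_1^-(y^k)$ (and symmetrically $\varrho_k^\varphi$ to dominate $L_2^-(x^{k+1})$). The uniform bounds~\eqref{eq:Lip-ub} in Assumption~\ref{ass:3}(ii) allow this to be folded into a single iteration-independent condition on the kernel, which is precisely where the algorithmic flexibility advertised in Section~\ref{Sec:Alg} is ultimately paid for.
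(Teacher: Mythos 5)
Your proposal is correct and follows essentially the same route as the paper's proof: the Fenchel--Young saturation $g_i^\ast(\cdot)-\langle\cdot,\cdot\rangle=-g_i(\cdot)$ at the new multipliers, the descent lemma applied to $-h^-$ in each block, the subproblem optimality tested at the previous iterate to cancel the $\nabla h^-$ inner products, and the strong convexity of the Bregman kernels absorbing the Lipschitz quadratics to give $\gamma_k=\min\{\varrho_k^\psi-L_1^-(y^k),\,\varrho_k^\varphi-L_2^-(x^{k+1})\}$. Your reorganization into four micro-steps with the bridge identity $\Psi(x^k,\xi^{k+1},y^k,\eta^{k+1})=\Phi(\zz^k)$ is only a cosmetic repackaging of the paper's single substitution chain, and your closing observation that the kernel moduli must dominate the Lipschitz constants is exactly the (implicit) requirement the paper imposes when choosing $\psi_k$ and $\varphi_k$.
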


\begin{proof}
	Invoking the first-order optimality condition of \eqref{eq:subgradient} yields
	\begin{equation} \label{eq:optcond_xi}
	0 \in \partial g_1^*(\xi^{k+1}) - x^k \quad \text{and}\quad 
	0 \in \partial g_2^*(\eta^{k+1}) - y^k.
	\end{equation}
	It then follows from the Fenchel-Young inequality \eqref{eq:FY} that 
	\begin{equation}\label{eq:optcond_xi_FY}
	\xi^{k+1} \in \partial g_1(x^k) \quad \text{and}\quad 
	\eta^{k+1} \in \partial g_2(y^k),
	\end{equation}
	which further implies that
	\begin{equation}\label{eq:conx}
	g_1^*(\xi^{k+1}) = \langle x^k, \xi^{k+1} \rangle - g_1(x^k) \quad \text{and} \quad
	g_2^*(\eta^{k+1}) = \langle y^k, \eta^{k+1} \rangle - g_2(y^k).
	\end{equation}
	Consequently, it follows from \eqref{eq:conx} and the notion of $\Psi(\ww)$ given by \eqref{eq:auxfun} that
	\begin{align}
	\Psi(\ww^{k+1}) & = f_1(x^{k+1}) + g_1^*(\xi^{k+1}) - \langle x^{k+1},\xi^{k+1} \rangle + f_2(y^{k+1}) + g_2^*(\eta^{k+1}) - \langle y^{k+1},\eta^{k+1} \rangle \nonumber \\
	&\qquad + h^+(x^{k+1},y^{k+1}) -h^-(x^{k+1},y^{k+1}) \nonumber \\
	&=  f_1(x^{k+1}) - \langle x^{k+1}-x^k, \xi^{k+1} \rangle -g_1(x^k) + f_2(y^{k+1}) - \langle y^{k+1}-y^k,\eta^{k+1} \rangle - g_2(y^k) \label{eq:lem1-1}\\
	&\qquad   + h^+(x^{k+1},y^{k+1}) -h^-(x^{k+1},y^{k+1}). \nonumber
	\end{align}
	According to Assumption~\ref{ass:3} (i), it follows from Lemma \ref{lem:delem} that
	\begin{equation*}
	- h^-(x^{k+1},y^k) \le - h^-(x^k,y^k)  - \left\langle x^{k+1}-x^k , \nabla_x h^-(x^k,y^k) \right\rangle + \frac{L_1^-(y^k)}{2} \lno x^{k+1} - x^k \rno^2
	\end{equation*}
	where $L_1^-(y^k)$ is the Lipschitz constant of $\nabla_x h^-(x,y^k)$ at $x^k$. Similarly, we have
	\begin{equation*}
	- h^-(x^{k+1},y^{k+1}) \le - h^-(x^{k+1},y^k)  - \left\langle y^{k+1}-y^k , \nabla_y h^-(x^{k+1},y^k) \right\rangle + \frac{L_2^-(x^{k+1})}{2} \lno y^{k+1} - y^k \rno^2,
	\end{equation*}
	where $L_2^-(x^{k+1})$ is the Lipschitz constant of $\nabla_y h^-(x^{k+1},y)$ at $y^k$. Consequently, substituting the above two inequalities into \eqref{eq:lem1-1} leads to
	\begin{align}\label{proofeq:linearized}
	\Psi(\ww^{k+1}) &\leq  f_1(x^{k+1})  -\left\langle x^{k+1}-x^k, \xi^{k+1} + \nabla_x h^-(x^k,y^k) \right\rangle - g_1(x^k)   \\
	& \quad+  f_2(y^{k+1})  - \left\langle y^{k+1}-y^k,\eta^{k+1} + \nabla_y h^-(x^{k+1},y^k)\right\rangle  - g_2(y^k)\nonumber \\
	& \quad  +\frac{L_1^-(y^k)}{2} \lno x^{k+1} - x^k \rno^2 + \frac{L_2^-(x^{k+1})}{2}\lno y^{k+1} - y^k \rno^2 \nonumber \\
	&\quad + h^+(x^{k+1},y^{k+1})- h^-(x^k,y^k).    \nonumber
	\end{align}
	Using the updating schemes of $x^{k+1}$ and $y^{k+1}$ in~\eqref{algeq:update_x} and~\eqref{algeq:update_y}, respectively, we have
	\begin{align}\label{proofeq:update_x}
	f_1(x^{k+1}) + h^+(x^{k+1},y^k) - &\left\langle x^{k+1}-x^k, \xi^{k+1} + \nabla_x h^-(x^k,y^k) \right\rangle \\
	&\hskip 1cm + \B_{\psi_k}(x^{k+1},x^k)\le f_1(x^k) + h^+(x^k,y^k) \nonumber
	\end{align}
	and 
	\begin{align}\label{proofeq:update_y}
	f_2(y^{k+1}) + h^+(x^{k+1},y^{k+1}) - &\left\langle y^{k+1}-y^k,\eta^{k+1} + \nabla_y h^-(x^{k+1},y^k)\right\rangle \\
	&\hskip 1cm	+ \B_{\varphi_k}(y^{k+1},y^k)\le f_2(y^k) + h^+(x^{k+1},y^k). \nonumber
	\end{align}
	Plugging the two inequalities~\eqref{proofeq:update_x} and~\eqref{proofeq:update_y} into~\eqref{proofeq:linearized} yields
	\begin{align}\label{eq:lem1-2}
	\Psi(\ww^{k+1})& \leq f_1(x^k) - g_1(x^k) +  f_2(y^k)  - g_2(y^k) + h^+(x^k,y^k) - h^-(x^k,y^k) - \B_{\psi_k}(x^{k+1},x^k)\nonumber \\
	&\qquad  + \frac{L_1^-(y^k)}{2} \lno x^{k+1} - x^k \rno^2   + \frac{L_2^-(x^{k+1})}{2}\lno y^{k+1} - y^k \rno^2  - \B_{\varphi_k}(y^{k+1},y^k)  \nonumber\\
	&= \Phi(\zz^k) - \B_{\psi_k}(x^{k+1},x^k) + \frac{L_1^-(y^k)}{2}\lno x^{k+1} - x^k\rno^2  \\
	&\hskip1.5cm - \B_{\varphi_k}(y^{k+1},y^k) + \frac{L_2^-(x^{k+1})}{2}\lno y^{k+1} - y^k\rno^2. \nonumber
	\end{align}
	Recalling the strong convexity of $\psi_k(\cdot)$ and $\varphi_k(\cdot)$ and Lemma \ref{lem:strong}, we accordingly chose $\psi_k(\cdot)$ and $\varphi_k(\cdot)$ with strongly convex modulus $\rho_1$ and $\rho_2$, respectively, so that $\widetilde{\psi}_k(\cdot)= \psi_k(\cdot) - \frac{L_1^-(y^k)}{2} \|\cdot\|^2$ and $\widetilde{\varphi}_k(\cdot) = \varphi_k(\cdot) - \frac{L_2^-(x^{k+1})}{2}\|\cdot\|^2$ are still strongly convex. Let
	\begin{numcases}{}
	\B_{\widetilde{\psi}_k}(x^{k+1},x^k) :=\B_{\psi_k}(x^{k+1},x^k) - \frac{L_1^-(y^k)}{2}\lno x^{k+1} - x^k\rno^2, \nonumber \\ \B_{\widetilde{\varphi}_k}(y^{k+1},y^k) :=\B_{\varphi_k}(y^{k+1},y^k) - \frac{L_2^-(x^{k+1})}{2}\lno y^{k+1} - y^k\rno^2,  \nonumber
	\end{numcases}
	and $\widehat{\B_k}(\zz^{k+1},\zz^k) := \B_{\widetilde{\psi}_k}(x^{k+1},x^k) + \B_{\widetilde{\varphi}_k}(y^{k+1},y^k)$.
	Therefore, it follows from the strong convexity of $\widetilde{\psi}_k(\cdot)$ and $\widetilde{\varphi}_k(\cdot)$ that there exists $\gamma_k:=\min\{\rho_1-L_1^-(y^k), \rho_2-L_2^-(x^{k+1})\}>0$ such that
	\begin{equation}\label{eq:Bzk}
	\widehat{\B_k}(\zz^{k+1},\zz^k) \ge \frac{\gamma_k}{2} \left\|\zz^{k+1} - \zz^k \right\|^2.
	\end{equation}
	Therefore, combining \eqref{eq:lem1-2} and \eqref{eq:Bzk} arrives at
	\begin{align}\label{ineq:lem1}
	\Psi(\ww^{k+1}) 
	&\leq  \Phi(\zz^k) - \left( \B_{\widetilde{\psi}_k}(x^{k+1},x^k) +  \B_{\widetilde{\varphi}_k}(y^{k+1},y^k)\right) \\
	& =  \Phi(\zz^k) - \widehat{\B_k}(\zz^{k+1},\zz^k)  \nonumber \\
	&	\le  \Phi(\zz^k) - \frac{\gamma_k}{2}\lno \zz^{k+1}-\zz^k\rno^2 \nonumber\\
	&	\le  \Psi(\ww^k) - \frac{\gamma_k}{2}\lno \zz^{k+1}-\zz^k\rno^2. \nonumber
	\end{align}
	where the last inequality follows from \eqref{eq:PsiPhi}. Such an inequality immediately means that the sequence $\{\Psi(\ww^k)\}$ is decreasing. Moreover, using \eqref{eq:PsiPhi} again to the left hand side of \eqref{ineq:lem1} yields
	\begin{equation}\label{eq:nondphi}
	\Phi(\zz^{k+1})\leq \Psi(\ww^{k+1}) \leq \Phi(\zz^k) - \frac{\gamma_k}{2}\lno \zz^{k+1}-\zz^k\rno^2,
	\end{equation}
	which shows that $\{\Phi(\zz^k)\}$ is also decreasing. The proof is completed.
\end{proof}

Hereafter, we will show that two adjacent points of the sequence $\left\{ \zz^k=(x^k,y^k) \right\}$ generated by Algorithm~\ref{alg:proposed} will get infinitely close as $k\to\infty$, which is a key property for the convergence.
 
\begin{lemma}\label{lem:bound}
	Suppose that Assumptions \ref{ass:1}-\ref{ass:3} hold. Let $\left\{ \zz^k=(x^k,y^k) \right\}$ be the sequence generated by Algorithm~\ref{alg:proposed}. Then, we have 
	\begin{equation*}
	\sum_{k=0}^\infty \lno \zz^{k+1} - \zz^k \rno^2 < \infty,
	\end{equation*}
	and hence $\lim_{k\to \infty} \|\zz^{k+1}-\zz^k\|=0$.
\end{lemma}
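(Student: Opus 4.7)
The plan is to telescope the descent inequality from Lemma~\ref{lem:descent} and exploit the lower bound on $\Phi$ provided by Assumption~\ref{ass:2}. First, I would sum the inequality \eqref{eq:dec_cond} over $k=0,1,\ldots,N$ to obtain
\begin{equation*}
\sum_{k=0}^{N} \frac{\gamma_k}{2}\lno \zz^{k+1}-\zz^k\rno^2 \;\le\; \Psi(\ww^0) - \Psi(\ww^{N+1}).
\end{equation*}
Then, using \eqref{eq:PsiPhi}, the right-hand side is bounded above by $\Psi(\ww^0) - \Phi(\zz^{N+1})$, and by Assumption~\ref{ass:2}, $\Phi(\zz^{N+1}) \ge \inf_{x,y}\Phi(x,y) > -\infty$. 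This gives a uniform upper bound independent of $N$, namely $\Psi(\ww^0) - \inf\Phi < \infty$.

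Next I need to argue that $\gamma_k$ is bounded below by a strictly positive constant, so that the $\gamma_k$ factor can be pulled out of the sum. Recall from the proof of Lemma~\ref{lem:descent} that $\gamma_k = \min\{\rho_1 - L_1^-(y^k),\, \rho_2 - L_2^-(x^{k+1})\}$, where $\rho_1,\rho_2$ are the strong convexity moduli of the kernels $\psi_k,\varphi_k$ (chosen, as in Lemma~\ref{lem:descent}, so that $\widetilde{\psi}_k$ and $\widetilde{\varphi}_k$ are strongly convex). Here Assumption~\ref{ass:3}(ii), specifically the uniform upper bounds \eqref{eq:Lip-ub}, enters: with the choice $\rho_1 > \lambda_1^+$ and $\rho_2 > \lambda_2^+$, we obtain
\begin{equation*}
\gamma_k \;\ge\; \gamma \;:=\; \min\{\rho_1 - \lambda_1^+,\; \rho_2 - \lambda_2^+\} \;>\; 0.
\end{equation*}
Substituting $\gamma$ into the telescoped bound yields
\begin{equation*}
\frac{\gamma}{2}\sum_{k=0}^{N} \lno \zz^{k+1}-\zz^k\rno^2 \;\le\; \Psi(\ww^0) - \inf\Phi,
\end{equation*}
and letting $N\to\infty$ gives the desired summability $\sum_{k=0}^\infty \lno \zz^{k+1}-\zz^k\rno^2 < \infty$. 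The final conclusion $\lim_{k\to\infty}\|\zz^{k+1}-\zz^k\|=0$ is then immediate from the fact that the general term of a convergent series tends to zero.

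The only subtle point (and hence the main obstacle) is the uniform lower bound on $\gamma_k$: a per-iteration strict positivity is not enough, since without a uniform gap the $\gamma_k$ could collapse to zero and invalidate the summability. This is precisely where part (ii) of Assumption~\ref{ass:3} plays its role, by giving $\sup_k L_i^-(\cdot) \le \lambda_i^+ < \infty$, which in turn forces us to choose the Bregman kernels so that their strong convexity moduli strictly exceed these uniform Lipschitz upper bounds. Everything else is a routine telescoping argument combined with the boundedness from below of $\Phi$.
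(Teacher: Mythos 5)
Your proof is correct and follows essentially the same route as the paper: telescope the descent inequality, bound the right-hand side using the lower bound on $\Phi$ from Assumption~\ref{ass:2}, and pull a uniform constant $\mu=\inf_k\gamma_k>0$ out of the sum (whether one telescopes the $\Psi$-form or the $\Phi$-form of the inequality is immaterial, since the two are linked by \eqref{eq:PsiPhi}). One small point in your favor: the paper justifies $\inf_k\gamma_k>0$ by citing the lower bounds \eqref{eq:Lip-lb}, but as you correctly observe it is the uniform upper bounds \eqref{eq:Lip-ub}, together with choosing the strong convexity moduli $\rho_i>\lambda_i^+$, that actually guarantee $\gamma_k\ge\min\{\rho_1-\lambda_1^+,\,\rho_2-\lambda_2^+\}>0$.
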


\begin{proof}
	According to \eqref{eq:Lip-lb}, we can expect $\mu= \inf_k \{\gamma_k\} > 0$. Then, it follows from \eqref{eq:nondphi} that
	\begin{equation} \label{eq:proof_dec_cond}
	\Phi(\zz^{k+1}) \le \Phi(\zz^k) - \frac{\gamma_k}{2}\lno \zz^{k+1}-\zz^k\rno^2  \le \Phi(\zz^k) - \frac{\mu}{2}\lno \zz^{k+1}-\zz^k\rno^2
	\end{equation}
	Let $N$ be a positive integer. Summing \eqref{eq:proof_dec_cond} from $k=0$ to $N-1$ leads to
	\begin{equation*}
	\sum_{k=0}^{N-1} \lno \zz^{k+1} - \zz^k \rno^2 \le \frac{2}{\mu}\left(\Phi(\zz^0) - \Phi(\zz^N)\right).
	\end{equation*}
	By Assumption \ref{ass:2}, i.e., $\inf_{\zz} \{\Phi(\zz):=\Phi(x,y)\} > -\infty$, taking the limit as $N \to \infty$ immediately arrives at
	\begin{equation*}
	\sum_{k=0}^\infty \lno \zz^{k+1} - \zz^k \rno^2 < \infty.
	\end{equation*}
	Hence, we conclude that $\lim_{k\to \infty} \|\zz^{k+1}-\zz^k\|=0$.
\end{proof}

Below, we present an inequality characterizing the relationship between the sequences $\{\ww^k\}$ and $\{\zz^k\}$, which is the pivot inequality to prove the sequence approaching to a critical point of \eqref{eq:problem}.
\begin{lemma}\label{lem:disbound}
	Suppose Assumptions \ref{ass:1} and \ref{ass:3} hold. Let $\left\{\zz^k := (x^k,y^k) \right\}$ be the sequence generated by Algorithm~\ref{alg:proposed} which is assumed to be bounded. Then, there exists $\tau>0$ such that for any $k \in \NN$, we have
	\begin{equation*}
	\dist(0,\partial \Psi(\ww^{k+1})) \le \tau \lno \zz^{k+1} - \zz^k\rno.
	\end{equation*}
\end{lemma}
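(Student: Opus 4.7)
The plan is to construct an explicit element of $\partial\Psi(\ww^{k+1})$ from the first-order optimality conditions of the three subproblems solved at iteration $k$, and then bound its norm by a constant multiple of $\|\zz^{k+1}-\zz^k\|$. Since $\langle x,\xi\rangle$, $\langle y,\eta\rangle$ and $h(x,y)$ are all continuously differentiable, the standard smooth-plus-nonsmooth sum rule and the separability of the nonsmooth pieces give the block-product decomposition
\begin{equation*}
\partial\Psi(\ww^{k+1})=\Omega_x\times\Omega_\xi\times\Omega_y\times\Omega_\eta,
\end{equation*}
where $\Omega_x=\partial f_1(x^{k+1})-\xi^{k+1}+\nabla_x h(x^{k+1},y^{k+1})$, $\Omega_\xi=\partial g_1^*(\xi^{k+1})-x^{k+1}$, and analogously for $\Omega_y,\Omega_\eta$. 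It therefore suffices to exhibit one vector in each of the four blocks whose norm is $O(\|\zz^{k+1}-\zz^k\|)$.

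For the dual blocks, the optimality condition \eqref{eq:optcond_xi} gives $x^k\in\partial g_1^*(\xi^{k+1})$ and $y^k\in\partial g_2^*(\eta^{k+1})$, so $x^k-x^{k+1}\in\Omega_\xi$ and $y^k-y^{k+1}\in\Omega_\eta$ trivially satisfy the required bound. For the $x$-block, the first-order condition of \eqref{algeq:update_x} reads
\begin{equation*}
\uu^k-\nabla_x h^+(x^{k+1},y^k)-\bigl(\nabla\psi_k(x^{k+1})-\nabla\psi_k(x^k)\bigr)\in\partial f_1(x^{k+1}).
\end{equation*}
Substituting $\uu^k=\xi^{k+1}+\nabla_x h^-(x^k,y^k)$ and adding $-\xi^{k+1}+\nabla_x h(x^{k+1},y^{k+1})$ on both sides makes the $\xi^{k+1}$ terms cancel and produces the element
\begin{equation*}
a_x=\bigl[\nabla_x h^+(x^{k+1},y^{k+1})-\nabla_x h^+(x^{k+1},y^k)\bigr]+\bigl[\nabla_x h^-(x^k,y^k)-\nabla_x h^-(x^{k+1},y^{k+1})\bigr]-\bigl[\nabla\psi_k(x^{k+1})-\nabla\psi_k(x^k)\bigr]
\end{equation*}
of $\Omega_x$. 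Each bracket is now a Lipschitz increment: the first two are controlled by Assumption~\ref{ass:3}(iii) applied on a bounded set containing all iterates (which is precisely where the boundedness hypothesis enters), and the third by a uniform Lipschitz modulus of $\nabla\psi_k$. The $y$-block is handled identically from \eqref{algeq:update_y}: there the $\nabla_y h^+(x^{k+1},y^{k+1})$ terms cancel completely, leaving only a $\nabla_y h^-$ discrepancy at $(x^{k+1},y^k)$ versus $(x^{k+1},y^{k+1})$ together with the Bregman-gradient difference $\nabla\varphi_k(y^{k+1})-\nabla\varphi_k(y^k)$.

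Stacking the four selected subgradients and applying the triangle inequality then gives $\dist(0,\partial\Psi(\ww^{k+1}))\le\tau\|\zz^{k+1}-\zz^k\|$ with $\tau$ depending only on the Lipschitz constant $L$ of $\nabla h$ on the bounded set and on $\sup_k\{L_{\psi_k},L_{\varphi_k}\}$. The main obstacle, in my view, is not the algebra but the bookkeeping in the $x$-block cancellation: one must be careful about which partial gradient of $h^+$ is evaluated at $y^k$ versus $y^{k+1}$, since a wrong grouping would leave behind a residual of order $1$ rather than $O(\|\zz^{k+1}-\zz^k\|)$. A secondary point is ensuring a uniform Lipschitz bound on $\nabla\psi_k$ and $\nabla\varphi_k$ is either part of the standing hypotheses on the kernels or implicit in the construction (for $\psi_k=\tfrac{c_k}{2}\|\cdot\|^2$ with $\sup_k c_k<\infty$ this is automatic).
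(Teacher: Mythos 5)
Your proposal is correct and follows essentially the same route as the paper's proof: write the first-order optimality conditions of \eqref{algeq:update_x}--\eqref{algeq:update_y} and \eqref{eq:optcond_xi}, select the resulting explicit subgradient in each of the four blocks of $\partial\Psi(\ww^{k+1})$, and bound every term as a Lipschitz increment (of $\nabla h^{\pm}$ on a bounded set containing the iterates and of $\nabla\psi_k,\nabla\varphi_k$ with uniformly bounded moduli). The block-by-block bookkeeping you describe, including the cancellation of $\nabla_y h^+(x^{k+1},y^{k+1})$ in the $y$-block and the residual $\nabla_x h^+$ mismatch between $y^k$ and $y^{k+1}$ in the $x$-block, matches the paper's computation exactly.
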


\begin{proof}
	Writing the first-order optimality conditions of~\eqref{algeq:update_x} and~\eqref{algeq:update_y} arrives at
	\begin{equation}\label{eq:optcond_x}
	0 \in \partial f_1(x^{k+1}) - \xi^{k+1}+ \nabla_x h^+(x^{k+1},y^k)  - \nabla_x h^-(x^k,y^k) + \nabla \psi(x^{k+1}) - \nabla \psi(x^k)
	\end{equation}
	and
	\begin{equation}\label{eq:optcond_y}
	0 \in \partial f_2(y^{k+1})  -\eta^{k+1}+ \nabla_y h^+(x^{k+1},y^{k+1}) - \nabla_y h^-(x^{k+1},y^k) + \nabla \varphi(y^{k+1}) - \nabla \varphi(y^k) .
	\end{equation}
	On the other hand, calculating $\partial \Psi(\ww)$ at $\ww^{k+1}:= (x^{k+1},\xi^{k+1},y^{k+1},\eta^{k+1})$ leads to
	\begin{equation}\label{eqs:subgrads_E}
	\left\{\begin{aligned}
	\partial f_1(x^{k+1}) - \xi^{k+1} + \nabla_xh(x^{k+1},y^{k+1}) &= \partial_x \Psi(\ww^{k+1}), \\
	-x^{k+1} + \partial g_1^*(\xi^{k+1}) &= \partial_{\xi} \Psi(\ww^{k+1}) ,\\
	\partial f_2(y^{k+1}) - \eta^{k+1} + \nabla_yh(x^{k+1},y^{k+1}) &= \partial_y \Psi(\ww^{k+1}) ,\\
	-y^{k+1} + \partial g_2^*(\eta^{k+1}) &= \partial_{\eta} \Psi(\ww^{k+1}).
	\end{aligned}\right.
	\end{equation}
	Consequently, substituting~\eqref{eq:optcond_x} and \eqref{eq:optcond_y} into~\eqref{eqs:subgrads_E} and using the fact~\eqref{eq:optcond_xi} yields
	\begin{align*}
	& \dist(0,\partial \Psi(\ww^{k+1})) \\
	& \le \lno \nabla_x h^+(x^{k+1},y^{k+1}) - \nabla_x h^+(x^{k+1},y^k) \rno + \lno \nabla_x h^-(x^{k+1},y^{k+1}) - \nabla_x h^-(x^k,y^k) \rno \\
	& \quad + \lno \nabla \psi(x^{k+1}) - \nabla \psi(x^k) \rno + \lno x^{k+1}-x^k \rno  + \lno \nabla_y h^-(x^{k+1},y^{k+1}) - \nabla_y h^-(x^{k+1},y^k) \rno \\
	& \quad + \lno \nabla \varphi(y^{k+1}) - \nabla \varphi(y^k) \rno + \lno y^{k+1}-y^k \rno \\
	& \le (L_+ + 2L_- + 2 + L_{\psi_k}+L_{\varphi_k}) \lno \zz^{k+1}-\zz^k\rno,
	\end{align*}
	where the first inequality follows from the definition of distance function and the triangle inequality (i.e., $\|a+b\|\leq \|a\| + \|b\|$ for all $a,b\in\RR^n$), and the second inequality is due to the Lipschitz continuity of $\nabla \psi_k$ and $\nabla \varphi_k$ with Lipschitz constants $L_{\psi_k}$ and $L_{\varphi_k}$, respectively. We can take $\tau = \sup\{ L_+ + 2L_- +1 + L_{\psi_k}+L_{\varphi_k}:k\in \NN\} < \infty$ from \eqref{eq:Lip-ub} and immediately conclude that
	\begin{equation}\label{eq:err_cond}
	\dist(0,\partial \Psi(\ww^{k+1})) \le \tau \lno \zz^{k+1}-\zz^k\rno.
	\end{equation}
	The proof is complete.
\end{proof}

Note that the boundedness of the sequence is a standard assumption for many nonconvex optimization algorithms. It is documented in \cite[Remark 3.3]{ABRS10} that the boundedness assumption on the sequence $\left\{\zz^k \right\}$ automatically holds when the corresponding lower level set $\{\zz\;|\; \Phi(\zz) \le \alpha_0\}$ is compact for some $\alpha_0\in \RR$ and $h(x,y)=\frac{1}{2}\|x-y\|^2$. Although checking the aforementioned boundedness is not an easy task in general, a large number of nonconvex optimization algorithms have received great successes in the fields of data sciences, machine learning and image processing. To a certain extent, those successful applications imply that the required conditions such as the boundedness of the sequences generated by nonconvex optimization algorithms could be reached with a high probability, or some theoretical assumptions could be further weakened.

\begin{lemma}\label{prop:2}
	Suppose that Assumptions \ref{ass:1}--\ref{ass:3} hold. Let $\left\{\zz^k:=(x^k,y^k) \right\}$ be a sequence generated by Algorithm \ref{alg:proposed} which is assumed to be bounded. The following assertions hold.
	\begin{enumerate}
		\item [\rm (i)] The set of accumulation points of the sequence $\left\{\ww^k:=(x^k,\xi^k,y^k,\eta^k) \right\}$, denoted by $\mathbb{W}^*$, is a nonempty compact set and contained in $\crit{\Psi}$, i.e., $\mathbb{W}^* \subset \crit{\Psi}$.
		\item [\rm (ii)] The limit $\Psi^{\infty} := \displaystyle \lim_{k\to \infty} \Psi(\ww^k)$ exists.
		\item [\rm (iii)] $\Psi(\ww) \equiv \Psi^{\infty}$ for all $\ww\in\mathbb{W}^*$.
	\end{enumerate}
\end{lemma}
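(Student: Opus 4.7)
The plan is to prove (ii) first, then (i), with (iii) as a short corollary. For (ii), Lemma \ref{lem:descent} shows $\{\Psi(\ww^k)\}$ is nonincreasing, while $\Psi(\ww^k)\ge \Phi(\zz^k)\ge \inf\Phi>-\infty$ by \eqref{eq:PsiPhi} and Assumption \ref{ass:2}; hence $\Psi^\infty$ exists and is finite. For (i), I first establish boundedness of $\{\ww^k\}$: $\{\zz^k\}$ is bounded by hypothesis, and since $\xi^{k+1}\in\partial g_1(x^k)$ and $\eta^{k+1}\in\partial g_2(y^k)$ by \eqref{eq:optcond_xi_FY}, with $g_1,g_2$ continuous convex (Assumption \ref{ass:1}), the local boundedness of their subdifferentials over bounded sets yields boundedness of $\{\xi^k\},\{\eta^k\}$. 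Thus $\mathbb{W}^*$ is nonempty by Bolzano--Weierstrass, closed as the set of subsequential limits of $\{\ww^k\}$, and bounded, so compact.

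To show $\mathbb{W}^*\subset \crit{\Psi}$, I fix $\ww^*=(x^*,\xi^*,y^*,\eta^*)\in\mathbb{W}^*$ with a subsequence $\ww^{k_j}\to \ww^*$. Lemma \ref{lem:bound} gives $\|\zz^{k_j}-\zz^{k_j-1}\|\to 0$, so $\zz^{k_j-1}\to \zz^*$; applying Lemma \ref{lem:disbound} at index $k_j-1$ supplies $\delta^{k_j}\in\partial\Psi(\ww^{k_j})$ with $\|\delta^{k_j}\|\le \tau\|\zz^{k_j}-\zz^{k_j-1}\|\to 0$. To invoke the graph-closedness of the limiting subdifferential (the property recalled just after Definition \ref{def:subdiff}) and conclude $0\in\partial\Psi(\ww^*)$, I must show the function-value convergence $\Psi(\ww^{k_j})\to \Psi(\ww^*)$, which is the main obstacle since $f_1,f_2$ are only lower semicontinuous. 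The continuous parts ($h$ and the inner-product terms) pose no difficulty. Using $\xi^{k_j}\in\partial g_1(x^{k_j-1})$, the Fenchel--Young equality gives $g_1^*(\xi^{k_j})=\langle x^{k_j-1},\xi^{k_j}\rangle -g_1(x^{k_j-1})$, which converges by continuity of $g_1$; its limit equals $g_1^*(\xi^*)$ once I verify $\xi^*\in\partial g_1(x^*)$ via the graph-closedness of $\partial g_1$, and similarly for $g_2^*$.

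The only nontrivial pieces are $f_1(x^{k_j})$ and $f_2(y^{k_j})$. For these I would test the subproblem \eqref{algeq:update_x} at step $k_j-1$ against the competitor $x=x^*$ and drop the nonnegative Bregman term on the left to obtain
$$f_1(x^{k_j})\le f_1(x^*)+\bigl[h^+(x^*,y^{k_j-1})-h^+(x^{k_j},y^{k_j-1})\bigr]+\langle x^{k_j}-x^*,\uu^{k_j-1}\rangle+\B_{\psi_{k_j-1}}(x^*,x^{k_j-1}).$$
On the right, the bracketed difference vanishes by continuity of $h^+$; $\langle x^{k_j}-x^*,\uu^{k_j-1}\rangle\to 0$ since $x^{k_j}\to x^*$ and $\uu^{k_j-1}=\xi^{k_j}+\nabla_x h^-(x^{k_j-1},y^{k_j-1})$ is bounded; and $\B_{\psi_{k_j-1}}(x^*,x^{k_j-1})\le \tfrac{L_{\psi_{k_j-1}}}{2}\|x^*-x^{k_j-1}\|^2\to 0$ by Lemma \ref{lem:Bregman}(iii) together with the uniform bound on $L_{\psi_k}$ already invoked in Lemma \ref{lem:disbound}. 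This yields $\limsup_j f_1(x^{k_j})\le f_1(x^*)$, and combined with lower semicontinuity gives $f_1(x^{k_j})\to f_1(x^*)$; the analogous argument via \eqref{algeq:update_y} handles $f_2$. Summing the pieces produces $\Psi(\ww^{k_j})\to \Psi(\ww^*)$, completing (i). Finally, (iii) is immediate: by (ii) the full sequence $\Psi(\ww^k)$ converges to $\Psi^\infty$, so every subsequential limit equals $\Psi^\infty$, yielding $\Psi(\ww^*)=\Psi^\infty$ for every $\ww^*\in\mathbb{W}^*$.
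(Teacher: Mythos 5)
Your proposal is correct and follows essentially the same route as the paper: boundedness of $\{\xi^k,\eta^k\}$ from local boundedness of convex subdifferentials, testing the subproblem optimality in \eqref{algeq:update_x}--\eqref{algeq:update_y} against the limit point to force $\Psi(\ww^{k_j})\to\Psi(\ww^*)$, and then combining Lemmas \ref{lem:bound} and \ref{lem:disbound} with graph-closedness of $\partial\Psi$ to obtain criticality. The only (cosmetic) difference is that you establish the value convergence termwise --- identifying the limits of $f_i$, $g_i^*$ separately, which requires the extra (correct) step $\xi^*\in\partial g_1(x^*)$ --- whereas the paper bounds $\limsup_j\Psi(\ww^{k_j})$ by $\Phi(x^*,y^*)\le\Psi(\ww^*)$ in one aggregated estimate and invokes lower semicontinuity of $\Psi$ once.
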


\begin{proof}
	We first prove Item (i). The boundedness assumption on $\{\zz^k=(x^k,y^k)\}$, which, together with the continuity and convexity of $g_i(\cdot)$ ($i=1,2$), implies that the sequence $\{(\xi^k,\eta^k)\}$ is also bounded. As a consequence, the sequence $\left\{\ww^k=(x^k,\xi^k,y^k,\eta^k) \right\}$ is also bounded, which, together with the closedness of the set $\mathbb{W}^*$, immediately shows that $\mathbb{W}^*$ is a nonempty compact set. Given any $\ww^*\in \mathbb{W}^*$ being a limit point of some subsequence, denoted by $\left\{\ww^{k_j}=(x^{k_j},\xi^{k_j},y^{k_j},\eta^{k_j})\right\}$, we will prove that $\ww^*=(x^*,\xi^*,y^*,\eta^*)$ is a critical point of $\Psi(\cdot)$.
	Letting $k=k_j-1$, it follows from the optimal condition of \eqref{algeq:update_x} that
	\begin{align*}
	f_1(x^{k_j}) \;+\;& h^+(x^{k_j},y^{k_j-1}) - \langle x^{k_j}-x^{k_j-1}, \uu^{k_j-1}\rangle + \B_{\psi_k}(x^{k_j}, x^{k_j-1}) \\
	&\le f_1(x^*) + h^+(x^*,y^{k_j-1}) - \langle x^*-x^{k_j-1}, \uu^{k_j-1}\rangle+ \B_{\psi_k}(x^*, x^{k_j-1}),
	\end{align*}
	which is equivalent to 
	\begin{align}\label{eqs:optcond.x}
	f_1(x^{k_j}) \;+\;& h^+(x^{k_j},y^{k_j-1}) + \B_{\psi_k}(x^{k_j}, x^{k_j-1})  \\ 
	&\le  f_1(x^*) + h^+(x^*,y^{k_j-1}) - \langle x^*-x^{k_j}, \uu^{k_j-1}\rangle + \B_{\psi_k}(x^*, x^{k_j-1}). \nonumber 
	\end{align}	
	Similarly, it follows from the optimal condition of \eqref{algeq:update_y} that
	\begin{align} \label{eqs:optcond.xy}
	f_2(y^{k_j}) \;+\;&  h^+(x^{k_j},y^{k_j}) + \B_{\varphi_k}(y^{k_j}, y^{k_j-1}) \\
	&\leq f_2(y^*) + h^+(x^{k_j},y^*) - \langle y^*-y^{k_j}, \vv^{k_j-1}\rangle + \B_{\varphi_k}(y^*, y^{k_j-1}). \nonumber
	\end{align}
	Hence, it follows from the nonnegativity of Bregman distance and the two inequalities \eqref{eqs:optcond.x} and \eqref{eqs:optcond.xy} that
	\begin{align*}
	\lim_{j\to \infty} \Psi(\ww^{k_j}) 
	&=  \lim_{j\to \infty} f_1(x^{k_j}) - \langle x^{k_j}, \xi^{k_j} \rangle + g_1^*(\xi^{k_j}) + f_2(y^{k_j}) - \langle y^{k_j}, \eta^{k_j} \rangle + g_2^*(\eta^{k_j}) + h(x^{k_j},y^{k_j}) \\
	&\le  \lim_{j\to \infty} f_1(x^{k_j}) +  \B_{\psi_k}(x^{k_j}, x^{k_j-1})  - \langle x^{k_j}, \xi^{k_j} \rangle + g_1^*(\xi^{k_j}) \\
	& \qquad\quad + f_2(y^{k_j})  + \B_{\varphi_k}(y^{k_j}, y^{k_j-1}) - \langle y^{k_j}, \eta^{k_j} \rangle + g_2^*(\eta^{k_j}) + h(x^{k_j},y^{k_j})\\
	&	\leq  \limsup_{j\to \infty} \left\{f_1(x^*) + h^+(x^*,y^{k_j-1}) - \langle x^*-x^{k_j},\uu^{k_j-1}\rangle + \B_{\psi_k}(x^*, x^{k_j-1}) \right.\\
	&\hskip1cm - \langle x^{k_j}, \xi^{k_j} \rangle + g_1^*(\xi^{k_j}) + f_2(y^*) + h^+(x^{k_j},y^*) - \langle y^*-y^{k_j}, \vv^{k_j-1}\rangle  \\
	&\hskip1cm \left.+ \B_{\varphi_k}(y^*, y^{k_j-1}) - \langle y^{k_j}, \eta^{k_j} \rangle + g_2^*(\eta^{k_j})- h^+(x^{k_j},y^{k_j-1})- h^-(x^{k_j},y^{k_j})\right\}\\
	&=  \limsup_{j\to \infty}\left\{ f_1(x^*) + h^+(x^*,y^{k_j-1}) - \langle x^*-x^{k_j}, \uu^{k_j-1}\rangle  + \B_{\psi_k}(x^*, x^{k_j-1}) \right. \\
	&\hskip1cm- g_1(x^{k_j-1})+ f_2(y^*) + h^+(x^{k_j},y^*) - \langle y^*-y^{k_j}, \vv^{k_j-1} \rangle + \B_{\varphi_k}(y^*, y^{k_j-1}) \\
	&\hskip1cm \left.- g_2(y^{k_j-1}) - h^+(x^{k_j},y^{k_j-1}) - h^-(x^{k_j},y^{k_j}) \right\}\\
	&= f_1(x^*) - g_1(x^*)  + f_2(y^*)  - g_2(y^*) + h(x^*,y^*) \equiv \Phi(x^*,y^*) \\
	&\le  \Psi(x^*,\xi^*,y^*,\eta^*) \equiv \Psi(\ww^*),
	\end{align*}
	where the second equality follows from \eqref{eq:conx}, the last equality holds by the continuity of $h(\cdot,\cdot)$ and $g_i(\cdot)$ $(i=1,2)$ and $\lim_{j \to \infty}\|\zz^{k_j}-\zz^{k_j-1}\| = 0$ (see Lemma \ref{lem:bound}), and the last inequality comes from \eqref{eq:PsiPhi}. In addition, we know from the low semicontinuity of $f_i(\cdot)$ ($i=1,2$) that the whole $\Psi(\ww)$ is also lower semicontinuous, which yields
	\begin{equation}
	\Psi(\ww^*)\equiv \Psi(x^*,\xi^*,y^*,\eta^*) \le \liminf_{j\to \infty}\Psi(x^{k_j},\xi^{k_j},y^{k_j},\eta^{k_j})\equiv  \liminf_{j\to \infty}\Psi(\ww^{k_j}).	
	\end{equation}
	Consequently, $ \lim_{j\to \infty}\Psi(\ww^{k_j})=\Psi(\ww^*)$. On the other hand, it follows from Lemmas \ref{lem:bound} and \ref{lem:disbound} that
	$$\lim_{j\to \infty} \dist(0,\partial \Psi(\ww^{k_j})) \le \lim_{j\to \infty} \tau \lno \zz^{k_j}-\zz^{k_j-1} \rno = 0.$$
	Hence, the closedness property of $\partial \Psi$ implies  that $0 \in \partial\Psi(\ww^*)$, which proves that $\ww^*$ is a critical point of $\Psi(\ww)$.
	
	Now, we turn our attention to Item (ii). According to Assumption~\ref{ass:2}, the sequence $\left\{\Psi(\ww^k)\right\}$ is bounded below. Besides, we see that the sequence $\left\{\Psi(\ww^k)\right\}$ is nonincreasing from \eqref{eq:dec_cond}. Thus, the limit $\Psi^\infty= \lim_{k\to \infty} \Psi(\ww^k)$ exists.
	
	Finally, we prove Item (iii). Taking any $\ww^*\in \mathbb{W}^*$ as an accumulation point of $\left\{\ww^k\right\}$, there exists a subsequence $\{\ww^{k_j}\}$ satisfying $\displaystyle \lim_{j \to \infty} \ww^{k_j} = \ww^*$. From Items (i) and (ii), we have
	$$\Psi(\ww^*) = \displaystyle \lim_{j \to \infty} \Psi(\ww^{k_j}) = \displaystyle \lim_{k\to \infty} \Psi(\ww^k) = \Psi^{\infty}.$$
	Thus, we conclude by the arbitrariness of $\ww^*$ that $\Psi(\ww) = \Psi^{\infty}$ for all $\ww\in\mathbb{W}^*$.
\end{proof}

With the above preparations, we are now proving that the sequence $\{\zz^k\}$ generated by Algorithm \ref{alg:proposed} globally converges to a critical point of \eqref{eq:problem}.
\begin{theorem}\label{thm1}
	Suppose that $\Psi(\ww)$ defined in~\eqref{eq:auxfun} is a K{\L} function such that Assumptions \ref{ass:1}--\ref{ass:3} hold. Let $\left\{\zz^k := (x^k,y^k) \right\}$ be a sequence generated by Algorithm \ref{alg:proposed} which is assumed to be bounded. Then, the following assertions hold.
	\begin{itemize}
		\item[\rm (i)] The sequence $\{\zz^k\}$ has finite length, i.e.,
		\begin{equation*}
		\sum_{k=1}^{\infty} \lno \zz^{k+1} - \zz^k \rno < \infty.
		\end{equation*}
		\item[\rm (ii)] The sequence $\{\zz^k\}$ converges to a critical point $\zz^* = (x^*,y^*)$ of $\Phi(x,y)$.
	\end{itemize} 
\end{theorem}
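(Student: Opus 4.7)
The plan is to follow the standard KL-based convergence blueprint of Attouch-Bolte-Svaiter and Bolte-Sabach-Teboulle, applied to the auxiliary function $\Psi$. The three essential ingredients are already in place: sufficient decrease along $\{\ww^k\}$ (Lemma \ref{lem:descent}), the relative-error bound $\dist(0, \partial\Psi(\ww^{k+1})) \le \tau \|\zz^{k+1} - \zz^k\|$ (Lemma \ref{lem:disbound}), and the characterization of the accumulation set $\mathbb{W}^*$ (Lemma \ref{prop:2}). What remains is to convert these three pieces into a summable bound on $\|\zz^{k+1} - \zz^k\|$ via the KL property, and then pass from criticality of $\Psi$ to criticality of $\Phi$.

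First I would apply the uniformized KL property (Lemma \ref{lem:UKL}) to the compact set $\Gamma := \mathbb{W}^*$, on which $\Psi$ is constant with value $\Psi^\infty$. This yields $\varepsilon, \zeta > 0$ and $\phi \in \varUpsilon_\zeta$ such that the KL inequality $\phi'\bigl(\Psi(\ww) - \Psi^\infty\bigr)\, \dist(0, \partial\Psi(\ww)) \ge 1$ holds for every $\ww$ in the slab $\{\dist(\ww, \mathbb{W}^*) < \varepsilon\} \cap \{\Psi^\infty < \Psi(\ww) < \Psi^\infty + \zeta\}$. Because $\Psi(\ww^k) \downarrow \Psi^\infty$ (monotonicity by Lemma \ref{lem:descent} and limit identification by Lemma \ref{prop:2}) and $\dist(\ww^k, \mathbb{W}^*) \to 0$ (a consequence of boundedness of $\{\ww^k\}$ together with the very definition of $\mathbb{W}^*$), one can choose $k_0$ so that $\ww^k$ lies in this slab for all $k \ge k_0$. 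The degenerate case $\Psi(\ww^{k_1}) = \Psi^\infty$ for some finite $k_1$ must be treated separately; in that situation \eqref{eq:dec_cond} forces $\zz^{k+1} = \zz^k$ for all $k \ge k_1$ and the finite-length statement becomes immediate.

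Next, I would combine the three tools into a recursive inequality. Concavity of $\phi$ delivers
\begin{equation*}
\Delta_k := \phi\bigl(\Psi(\ww^k) - \Psi^\infty\bigr) - \phi\bigl(\Psi(\ww^{k+1}) - \Psi^\infty\bigr) \ge \phi'\bigl(\Psi(\ww^k) - \Psi^\infty\bigr)\bigl(\Psi(\ww^k) - \Psi(\ww^{k+1})\bigr).
\end{equation*}
Chaining the KL bound $\phi'\bigl(\Psi(\ww^k) - \Psi^\infty\bigr) \ge \dist(0,\partial\Psi(\ww^k))^{-1}$, the relative-error bound $\dist(0,\partial\Psi(\ww^k)) \le \tau\|\zz^k - \zz^{k-1}\|$, and the sufficient-decrease bound $\Psi(\ww^k) - \Psi(\ww^{k+1}) \ge (\mu/2)\|\zz^{k+1} - \zz^k\|^2$ (with $\mu := \inf_k \gamma_k > 0$ guaranteed by \eqref{eq:Lip-ub}), one obtains $\|\zz^{k+1} - \zz^k\|^2 \le (2\tau/\mu)\,\|\zz^k - \zz^{k-1}\|\, \Delta_k$. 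An AM-GM step on the right-hand side then yields
\begin{equation*}
2\|\zz^{k+1} - \zz^k\| \le \|\zz^k - \zz^{k-1}\| + \frac{2\tau}{\mu}\Delta_k,
\end{equation*}
and summing from $k_0$ to $\infty$ telescopes the $\Delta_k$-terms (bounded above by $\phi(\Psi(\ww^{k_0}) - \Psi^\infty)$), which proves (i).

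Finally, (i) renders $\{\zz^k\}$ Cauchy, hence $\zz^k \to \zz^* = (x^*,y^*)$. Boundedness of $\{\ww^k\}$ (Item (i) of Lemma \ref{prop:2}) lets me extract a subsequence whose $\ww$-limit $\ww^* := (x^*, \xi^*, y^*, \eta^*) \in \mathbb{W}^* \subset \crit{\Psi}$. Expanding $0 \in \partial\Psi(\ww^*)$ via \eqref{eqs:subgrads_E} yields $\xi^* \in \partial g_1(x^*)$, $\eta^* \in \partial g_2(y^*)$, $\xi^* - \nabla_x h(x^*,y^*) \in \partial f_1(x^*)$, and $\eta^* - \nabla_y h(x^*,y^*) \in \partial f_2(y^*)$. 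Invoking the limiting subdifferential sum rule on $\Phi$ (valid since $g_1, g_2$ are convex and continuous, hence locally Lipschitz) then gives $0 \in \partial\Phi(x^*,y^*)$, establishing (ii). The main obstacle, as usual in this framework, is the careful derivation of the telescoping inequality — verifying the iterates stay in the KL slab for all large $k$ and handling the finite-termination degeneracy — after which both conclusions reduce to routine bookkeeping.
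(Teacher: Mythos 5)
Your proposal follows essentially the same route as the paper: uniformized K{\L} property applied to $\Gamma=\mathbb{W}^*$, chaining the sufficient-decrease estimate, the relative-error bound, and the concavity of $\phi$ into a summable bound on $\lno \zz^{k+1}-\zz^k\rno$, and then passing from $0\in\partial\Psi(\ww^\star)$ to $0\in\partial\Phi(x^*,y^*)$ via the Fenchel--Young correspondence and the convexity of $g_1,g_2$. The only (minor) divergence is that you invoke Lemma~\ref{lem:disbound} with its stated index, $\dist(0,\partial\Psi(\ww^k))\le\tau\lno \zz^{k}-\zz^{k-1}\rno$, which produces the standard mixed-index inequality $2\lno\zz^{k+1}-\zz^k\rno\le\lno\zz^k-\zz^{k-1}\rno+(2\tau/\mu)\Delta_k$ and a genuine telescoping sum, whereas the paper uses a same-index version of the error bound to reach $\lno\zz^{k+1}-\zz^k\rno\le(2\tau/\mu)\mathscr{D}_k^\phi$ directly; both variants yield the finite-length conclusion.
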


\begin{proof}
	Since the auxiliary function $\Psi(\ww)$ is a K{\L} function, we can apply Lemma \ref{lem:UKL} with setting $\Gamma=\mathbb{W}^*$ defined in Lemma \ref{prop:2} to our problem. Then, there exist $\varepsilon>0$, $\zeta>0$ and a continuous concave function $\phi(\cdot) \in \varUpsilon_\zeta$ such that, for any $\ww\in \left\{ \ww |~\dist (\ww,\mathbb{W}^*) <\varepsilon \right\} \cap \left\{ \ww|~\Psi^{\infty} < \Psi(\ww) < \Psi^{\infty} + \zeta\right\}$, we have
	\begin{equation*}
	\phi^\prime(\Psi(\ww)-\Psi^{\infty}) \cdot \dist(0,\partial \Psi(\ww)) \ge 1,
	\end{equation*}
	where $\Psi^{\infty} = \lim_{k\to \infty} \Psi(\ww^k)$. Hence there exists $\bar{N} > 0$ such that 
	\begin{equation}\label{th1:eq1}
	\phi^\prime(\Psi(\ww^k)-\Psi^{\infty}) \cdot \dist(0,\partial \Psi(\ww^k)) \ge 1,\;\quad \forall k \ge \bar{N}.
	\end{equation}	
	Using the concavity of $\phi(\cdot)$, we have 
	\begin{align}\label{lem:ieq}
	& \left[\phi\left(\Psi(\ww^k)-\Psi^{\infty}\right) - \phi\left(\Psi(\ww^{k+1})-\Psi^{\infty}\right)\right] \cdot \dist\left(0,\partial \Psi(\ww^k)\right) \nonumber \\
	& \ge \left[\Psi(\ww^k)- \Psi(\ww^{k+1})\right] \cdot \phi^\prime\left(\Psi(\ww^k)-\Psi^{\infty}\right) \cdot \dist\left(0,\partial \Psi(\ww^k)\right) \nonumber \\
	& \ge \Psi(\ww^k)-\Psi(\ww^{k+1}).
	\end{align}
	Defining $\mathscr{D}_k^\phi := \phi\left( \Psi( \ww^k)-\Psi^{\infty}\right) - \phi\left( \Psi( \ww^{k+1}) -\Psi^{\infty}\right)$, it then follows from Definition \ref{def:desf} that $\mathscr{D}_k^\phi $ is positive. Therefore, by recalling the sufficient decrease condition~\eqref{eq:dec_cond}, we have 
	\begin{align*}
	\lno \zz^{k+1}-\zz^k \rno^2 
	\le &\;\frac{2}{\gamma_k}\left(\Psi(\ww^k)-\Psi(\ww^{k+1})\right) \\
	\le &\; \frac{2}{\mu} \mathscr{D}_k^\phi \cdot \dist\left(0,\partial \Psi(\ww^k)\right) \\
	\le &\; \frac{2\tau}{\mu} \mathscr{D}_k^\phi \lno \zz^{k+1} - \zz^k \rno ,
	\end{align*}	
	where the second inequality and the third one follow from \eqref{lem:ieq} and the relative error condition~\eqref{eq:err_cond}, respectively.
	Moreover, an application of $\sqrt{ab} \le \frac{a+b}{2}$ for $a,b\in \RR_+$ to the last term of the above inequality yields
	\begin{equation*} 
	\lno \zz^{k+1} - \zz^k \rno  \le \sqrt{\frac{2\tau}{\mu}\mathscr{D}_k^\phi \lno \zz^{k+1} - \zz^k \rno} \le \frac{\tau}{\mu} \mathscr{D}_k^\phi + \frac{1}{2}\lno \zz^{k+1} - \zz^k \rno , \quad \forall k\ge \bar{N}.
	\end{equation*}
	Rearranging terms of the above inequality, we further obtain that	
	\begin{equation}\label{eq:converge_core}
	\lno \zz^{k+1} - \zz^k \rno \le \frac{2\tau}{\mu} \mathscr{D}_k^\phi.
	\end{equation}
	Summing both sides of the~\eqref{eq:converge_core} from $k=\bar{N}$ to $\infty$ and noting that 
	$$\sum_{k=\bar{N}}^\infty \mathscr{D}_k^\phi \le \phi\left( \Psi\left( \ww^{\bar{N}}\right) - \Psi^{\infty} \right)< \infty,$$
	we immediately obtain
	\begin{equation*}
	\sum_{k=\bar{N}}^\infty \lno \zz^{k+1} - \zz^k \rno  < \infty,
	\end{equation*}
	which implies that the sequence $\{\zz^k=(x^k,y^k)\}$ generated by our UBAMA is convergent.
	
	Suppose that $\lim_{k\to \infty} (x^k,y^k) = (x^*,y^*)$. Next we turn to proving $0 \in \partial \Phi(x^*,y^*)$. From Lemma~\ref{prop:2}, we see that there exists a subsequence $\{\ww^{k_j}\}$ converging to some point in $\mathbb{W}^*$, denoted by $\ww^\star$. Writing down $0 \in \partial \Psi(\ww^\star)$ yields
	\begin{equation*}
	\left\{
	\begin{aligned}
	0 &\in \partial f_1(x^\star) - \xi^\star + \nabla_xh(x^\star,y^\star),\\
	x^\star &\in \partial g_1^*(\xi^\star),\\
	0 &\in \partial f_2(y^\star) - \eta^\star + \nabla_yh(x^\star,y^\star),\\
	y^\star &\in  \partial g_2^*(\eta^\star).
	\end{aligned}\right.
	\end{equation*}
	Invoking the continuity and convexity of $ g_i(\cdot)$ ($i=1,2$) and the Fenchel-Young inequality \eqref{eq:FY}, we can obtain
	\begin{equation*}
	\left\{
	\begin{aligned}
	& 0 \in \partial f_1(x^\star) + \nabla_x h(x^\star,y^\star) - \partial g_1(x^\star), \\
	& 0 \in \partial f_2(y^\star) + \nabla_y h(x^\star,y^\star) - \partial g_2(y^\star).
	\end{aligned}\right.
	\end{equation*}
	Note that $(x^\star,y^\star) = \displaystyle\lim_{j\to \infty} (x^{k_j},y^{k_j}) = \lim_{k\to \infty} (x^k,y^k) = (x^*,y^*)$, we conclude that $0 \in \partial \Phi(x^*,y^*)$. This completes the proof.
\end{proof}

	To end this section, we prove the local convergence rates for our Algorithm \ref{alg:proposed} as proved in the seminal work \cite{AB09,ABRS10}. It is worth pointing out that the proof is based on the K{\L} assumption of the surrogate objective function $\Psi(\ww)$ defined in \eqref{eq:auxfun}, while such an assumption is imposed on the objective function for the novel PALM algorithm. Here, we refer the reader to \cite{LPT19}  for showing the relationships between various K{\L} assumptions, when dealing with only one block DC programming.
	
	\begin{theorem}
		Suppose that Assumptions \ref{ass:1}--\ref{ass:3} hold. Let $\left\{\zz^k := (x^k,y^k) \right\}$ be a sequence generated by Algorithm \ref{alg:proposed} which is assumed to be bounded. Suppose that $\Psi(\ww)$ is a K{\L} function with exponent $\vartheta \in [0,1)$, where the corresponding desingularizing function takes the form $\phi(s) = as^{1-\vartheta}$ with $a>0$. Then, the following assertions hold.
		\begin{itemize}
			\item[\rm (i)] If $\vartheta = 0$, then $\{\zz^k\}$ converges in finite iterations.
			\item[\rm (ii)] If $\vartheta \in (0, 1/2]$, then there exists $\hat{c}>0$ and $q\in [0,1)$ such that $\|\zz^k - \zz^*\| \le \hat{c} q^k$.
			\item[\rm (iii)] If $\vartheta \in (1/2, 1)$, then there exists $\hat{c}>0$ such that $\|\zz^k - \zz^*\| \le \hat{c} k^{-\frac{1-\vartheta}{2\vartheta-1}}$.
		\end{itemize} 
	\end{theorem}
	
	\begin{proof}
		By the condition of this theorem, $\Psi(\ww)$ given in \eqref{eq:auxfun} is assumed to be a K{\L} function with exponent $\vartheta$, then there exists some desingularizing function $\phi(s) = as^{1-\vartheta}$ satisfying the following K{\L} inequality (see \eqref{th1:eq1}):
		\begin{equation}\label{th2:ie1}
		\phi^\prime(\Psi(\ww^k)-\Psi^{\infty}) \cdot \dist(0,\partial \Psi(\ww^k)) \ge 1,\;\quad \forall k \ge \bar{N}.
		\end{equation}
		Invoking the definition of $\phi(\cdot)$ and Lemma \ref{lem:disbound}, it follows from \eqref{th2:ie1} that
		\begin{equation}\label{th2:ie2} 
		\frac{a(1-\vartheta)\tau \|\zz^k - \zz^{k-1}\|}{\left(\Psi(\ww^k)-\Psi^\infty\right)^\vartheta} \ge 1,\quad  \; \forall k \ge \bar{N}.
		\end{equation}
		Besides, it follows from \eqref{ineq:lem1} and $\mu= \inf_k \{\gamma_k\} > 0$ that
		\begin{equation}\label{th2:ie3} 
		\frac{\mu}{2}\lno \zz^k-\zz^{k-1}\rno^2 \le \Psi(\ww^{k-1}) - \Psi(\ww^k).
		\end{equation}
		Using the convexity of $-\phi(\cdot)$ gives
		$$
		-\phi(\Psi(\ww^k)-\Psi^\infty) \ge - \phi(\Psi(\ww^{k-1})-\Psi^\infty)  -\phi^\prime (\Psi(\ww^{k-1})-\Psi^\infty) \left(\Psi(\ww^k) - \Psi(\ww^{k-1})\right),
		$$
		which, by using the definition of $\phi(s)= as^{1-\vartheta}$, reads as
		\begin{align}\label{th2:ie4}
		&	 \left(\Psi(\ww^{k-1})-\Psi^\infty\right)^{1-\vartheta} - \left(\Psi(\ww^k)-\Psi^\infty\right)^{1-\vartheta} \nonumber\\
		&	\;\ge  (1-\vartheta)\left(\Psi(\ww^{k-1})-\Psi^\infty\right)^{-\vartheta}\left(\Psi(\ww^{k-1}) - \Psi(\ww^k)\right) \nonumber\\
		&\;	\ge  \frac{\mu(1-\vartheta)}{2}\lno \zz^k-\zz^{k-1}\rno^2 \left(\Psi(\ww^{k-1})-\Psi^\infty\right)^{-\vartheta} \nonumber\\
		&	\;\ge  \frac{\mu}{a \tau} \frac{\lno \zz^k - \zz^{k-1}\rno^2}{\lno \zz^{k-1} - \zz^{k-2}\rno},
		\end{align}
		where the second inequality and the last one follow from \eqref{th2:ie3} and \eqref{th2:ie2}, respectively.
		By the convergence of the sequence $\{\zz^k\}$, we assume that there exists some constant $r \in (0,1)$ such that $\|\zz^k - \zz^{k-1}\| \ge r\|\zz^{k-1} - \zz^{k-2}\|$ for sufficiently large $k$.
		Thus, it follows from \eqref{th2:ie4} that there exists some constant $\widehat{\alpha} = \frac{r\mu}{a\tau}>0$ such that
		\begin{equation}\label{ieq:rate-main}
		\frac{1}{\widehat{\alpha}}\lno \zz^k - \zz^{k-1} \rno \le \left(\Psi(\ww^{k-1})-\Psi^\infty\right)^{1-\vartheta} - \left(\Psi(\ww^k)-\Psi^\infty\right)^{1-\vartheta}.
		\end{equation}
		Let us define $\mathcal{S}_i = \sum_{k=i}^\infty \lno \zz^{k+1} - \zz^k \rno < \infty$, which is finite by Theorem \ref{thm1}. Accordingly, by using the triangular inequality and the fact that $\lim_{k\to \infty}\zz^k=\zz^*$, we can bound
		$$
		\begin{aligned}
		\|\zz^i - \zz^*\| & = \|(\zz^i - \zz^{i+1}) + (\zz^{i+1} - \zz^{i+2}) + \ldots + (\zz^{i+n} - \zz^*) \| \\
		& \le \sum_{j=1}^n \|\zz^{i+j} - \zz^{i+j-1}\| + \|\zz^{i+n} - \zz^*\| \\
		& \le p\mathcal{S}_i, \; \forall i \ge 0,
		\end{aligned}
		$$
		by some constant $p \ge 1$. It is therefore sufficient to establish the estimations appearing in (ii) and (iii) via $\mathcal{S}_i$. Without loss of generality, we assume  $\mathcal{S}_i>0$ and $\Psi(\ww^i) - \Psi^\infty \ge 0$ for all $i\ge 0$ due to the nonincreasing property of $\{\Psi(\ww^k)\}$. Hence, by summing inequality \eqref{ieq:rate-main} from $k=i+1$ to $\infty$, together with the fact that $\lno \zz^i - \zz^{i-1}\rno = \mathcal{S}_{i-1} - \mathcal{S}_i$, we immediately obtain
		\begin{align} \label{ieq:sumdz}
		\mathcal{S}_i \le  \widehat{\alpha} \left(\Psi(\ww^i)-\Psi^\infty\right)^{1-\vartheta} 
		\le  \widehat{\alpha} \left( a(1-\vartheta)\tau \lno \zz^i - \zz^{i-1}\rno \right)^{\frac{1-\vartheta}{\vartheta}}
		=  \widetilde{\alpha} \left(\mathcal{S}_{i-1} - \mathcal{S}_i\right)^{\frac{1-\vartheta}{\vartheta}}, 
		\end{align}
		where the second inequality comes from \eqref{th2:ie1} and $\widetilde{\alpha} = \widehat{\alpha} \left(a(1-\vartheta)\tau\right)^{\frac{1-\vartheta}{\vartheta}}$.
		
		Now, we are at the stage of proving the three assertions of this theorem.
		\begin{itemize}
			\item Case $\vartheta = 0$. For sufficiently large $k \in \mathcal{E} := \{k \in \NN : \zz^{k+1} \ne \zz^k\}$ we have $\lno \zz^{k+1} - \zz^k \rno \ge \varpi > 0$. It follows from \eqref{eq:dec_cond} that
			$$			\Psi(\ww^{k+1}) \leq \Psi(\ww^k) - \frac{\mu \varpi^2}{2}.		$$
			The convergence of $\Psi(\ww^k)$ implies that $\mathcal{E}$ is finite.
			\item Case $\vartheta \in (0, 1/2]$. For sufficiently large $k$ (say, $k\ge \tilde{k}$), it holds that $(\mathcal{S}_{k-1} - \mathcal{S}_k)^{\frac{1-\vartheta}{\vartheta}} \le \mathcal{S}_{k-1} - \mathcal{S}_k$,
			which, together with \eqref{ieq:sumdz}, gives
			$$\mathcal{S}_k \le \frac{\widetilde{\alpha}}{1+\widetilde{\alpha}} \mathcal{S}_{k-1}.$$
			Furthermore, by induction on $k$, we can obtain
			$$\|\zz^k - \zz^*\| \le p\mathcal{S}_k \le p\left(\frac{\widetilde{\alpha}}{1+\widetilde{\alpha}}\right)^{k-\tilde{k}}\mathcal{S}_{\tilde{k}} = \hat{c} \left(\frac{\widetilde{\alpha}}{1+\widetilde{\alpha}}\right)^k,$$
			with some $\hat{c}>0$. Hence, setting $q = {\widetilde{\alpha}}/{(1+\widetilde{\alpha})}$ arrives at the second assertion. 
			\item Case $\vartheta \in (1/2,1)$. 
			By defining a function ${\bm F}(s) = s^{-\frac{\vartheta}{1-\vartheta}}$ and letting $\widehat{\upsilon} \in (1,\infty)$, we first assume ${\bm F}(\mathcal{S}_i) \le \widehat{\upsilon} {\bm F}(\mathcal{S}_{i-1})$ and take $k\ge \widehat{k}$. Rewriting inequality \eqref{ieq:sumdz}, i.e., $\mathcal{S}_i \le \widetilde{\alpha} (\mathcal{S}_{i-1} - \mathcal{S}_i)^{\frac{1-\vartheta}{\vartheta}}$, arrives at 
			\begin{align*}
			1 \le  \frac{\widetilde{\alpha}^{\frac{\vartheta}{1-\vartheta}}(\mathcal{S}_{i-1} - \mathcal{S}_i)}{\mathcal{S}_i^{\frac{\vartheta}{1-\vartheta}}} 
			& =\widetilde{\alpha}^{\frac{\vartheta}{1-\vartheta}}(\mathcal{S}_{i-1} - \mathcal{S}_i){\bm F}(\mathcal{S}_i)  \nonumber\\
			&\le  \widehat{\upsilon}\widetilde{\alpha}^{\frac{\vartheta}{1-\vartheta}}(\mathcal{S}_{i-1} - \mathcal{S}_i){\bm F}(\mathcal{S}_{i-1}) \nonumber\\
			&	\le  \widehat{\upsilon}\widetilde{\alpha}^{\frac{\vartheta}{1-\vartheta}}\int_{\mathcal{S}_i}^{\mathcal{S}_{i-1}} {\bm F}(s) \mathrm{d}s \nonumber\\
			&\le  \widehat{\upsilon}\widetilde{\alpha}^{\frac{\vartheta}{1-\vartheta}}\frac{1-\vartheta}{1-2\vartheta}\int_{\mathcal{S}_i}^{\mathcal{S}_{i-1}} {\bm F}(s) \mathrm{d}s\\
			&\le  \widehat{\upsilon}\widetilde{\alpha}^{\frac{\vartheta}{1-\vartheta}}\frac{1-\vartheta}{1-2\vartheta}\left[\mathcal{S}_{i-1}^{\frac{1-2\vartheta}{1-\vartheta}} - \mathcal{S}_i^{\frac{1-2\vartheta}{1-\vartheta}}\right].
			\end{align*}
			Note that $\frac{1-2\vartheta}{1-\vartheta} < 0$ for $\vartheta \in (1/2, 1)$. Then, we have
			\begin{equation} \label{eq:h-case1}
			0 < -\frac{1-2\vartheta}{(1-\vartheta)\widehat{\upsilon}\widetilde{\alpha}^{\frac{\vartheta}{1-\vartheta}}} \le \mathcal{S}_i^{\frac{1-2\vartheta}{1-\vartheta}} - \mathcal{S}_{i-1}^{\frac{1-2\vartheta}{1-\vartheta}}.
			\end{equation}
			Now assume ${\bm F}(\mathcal{S}_i) > \widehat{\upsilon}{\bm F}(\mathcal{S}_{i-1})$ and set $q = \widehat{\upsilon}^{-\frac{1-\vartheta}{\vartheta}} \in (0,1)$, then $\mathcal{S}_i \le q \mathcal{S}_{i-1}$. Furthermore, $\mathcal{S}_i^{\frac{1-2\vartheta}{1-\vartheta}} \ge q^{\frac{1-2\vartheta}{1-\vartheta}} \mathcal{S}_{i-1}^{\frac{1-2\vartheta}{1-\vartheta}}$ and
			\begin{equation} \label{eq:h-case2}
			0 < (q^{\frac{1-2\vartheta}{1-\vartheta}} -1) \mathcal{S}_{i-1}^{\frac{1-2\vartheta}{1-\vartheta}} \le \mathcal{S}_i^{\frac{1-2\vartheta}{1-\vartheta}} - \mathcal{S}_{i-1}^{\frac{1-2\vartheta}{1-\vartheta}}.
			\end{equation}
			Both inequalities \eqref{eq:h-case1} and \eqref{eq:h-case2} show that there always exists some constant $\nu>0$ such that
			$$	\mathcal{S}_i^{\frac{1-2\vartheta}{1-\vartheta}} - \mathcal{S}_{i-1}^{\frac{1-2\vartheta}{1-\vartheta}} \ge \nu.$$
			Summing this inequality from $i = \widehat{k}+1$ to $i=k$, we obtain $\mathcal{S}_k^{\frac{1-2\vartheta}{1-\vartheta}} - \mathcal{S}_{\widehat{k}}^{\frac{1-2\vartheta}{1-\vartheta}} \ge \nu (k-\widehat{k})$, which can be reformulated as
			$$
			\|\zz^{k} - \zz^*\| \le p\mathcal{S}_k \le p\left[ \mathcal{S}_{\widehat{k}}^{\frac{1-2\vartheta}{1-\vartheta}} + \nu(k-\widehat{k}) \right]^{\frac{1-\vartheta}{1-2\vartheta}} \le \hat{c} k^{-\frac{1-\vartheta}{2\vartheta-1}}
			$$
			for some $\hat{c}>0$. The proof is complete.
		\end{itemize}
	\end{proof}

The K{\L} exponent plays an important role in analyzing the convergence rate of first-order optimization methods. However, the estimation of the K{\L} exponent is not an easy task in general. Here, we refer the reader to \cite{BNPS17,LP18,WPB21,YLP22} for some novel results on the K{\L} exponent of sparse optimization problems.

\section{Numerical experiments} \label{Sec:Exp}
In this section, we aim to show that our Algorithm \ref{alg:proposed} (denoted by `UBAMA') is a customized solver for generalized DC programming \eqref{eq:problem} in the sense that our UBAMA enjoys easier subproblems, thereby taking less computing time than some existing state-of-the-art nonconvex optimization methods. Accordingly, we first modify some existing imaging optimization models so that they fall into the form of \eqref{eq:problem}. Then, we conduct the numerical performance of our UBAMA on some imaging datasets. As tested in the nonconvex literature, here we will not check the conditions required in theoretical analysis for the coming models, since the quality of these obtained solutions can be seen from the recovered images. All numerical experiments have been implemented in {\sc Matlab} 2022a and been conducted on a ThinkPad T470 laptop computer with Intel(R) Core(TM) i5 CPU 2.40 GHz and 8G memory.

\subsection{Image reconstruction}\label{subsec:image}
Image deconvolution and inpainting are prototypical image reconstruction problems. In this subsection, we consider the task of reconstructing an image from the one which is convoluted (or blurred) and suffered some noise and loss of information, i.e., reconstructing image $x\in \RR^n$ from the following system
\begin{equation}\label{eq:imagedi}
b=SKx + \varepsilon,
\end{equation}
where $b$ is a corrupted image with Gaussian white noise $\varepsilon \sim \mathcal{N}(0,\delta^2)$, $S: \RR^n \to \RR^m$ and $K : \RR^n \to \RR^n$ are the down-sampling and spatially invariant convolution operators, respectively. In general, such a problem is ill-posed and more difficult than the pure image deconvolution and pure image inpainting. As we know, one of the most popular ways to recover $x$ is reformulating \eqref{eq:imagedi} as a least square problem equipped with a powerful Total Variation (TV) regularization term \cite{ROF92}.  In 2015, Lou et al. \cite{LZOX15} introduced a  weighted difference of anisotropic and isotropic TV model, and showed numerically that such a model is powerful for image denoising, image deblurring, and magnetic resonance imaging reconstruction. Here, we follow this idea to tackle \eqref{eq:imagedi}, and the corresponding model is expressed as follows
\begin{equation}\label{eq:dcimagedi}
\min_{x} \; \frac{1}{2} \lno SKx - b \rno_2^2 + \tau \left(\lno \D x \rno_1 - \alpha \lno \D x \rno_{2,1}\right),
\end{equation}
where $\|\D x \|_1=\| \D_1 x \|_1+\|\D_2 x\|_1$ and $ \|\D x\|_{2,1}=\lno \sqrt{| \D_1 x|^2 + |\D_2 x|^2}\rno_1$ with $\D_1,\D_2$ being the horizontal and vertical partial derivative operators, respectively (see \cite{LZOX15} for more details); $\tau>0$ and $\alpha \in [0,1]$ are trade-off parameters to balance the regularization term. To simplify the DC part, we first introduce an auxiliary variable $y = \D x$ to extract $\D x$ from the nonsmooth functions. Then, we follow the spirit of penalty method to reformulate \eqref{eq:dcimagedi} as
\begin{equation} \label{eq:super-resolution}
\min_{x,y} \; \frac{1}{2}\lno SKx - b\rno_2^2 + \tau \left(\lno y \rno_1 - \alpha \lno y\rno_{2,1} \right) + \frac{\beta}{2}\lno \D x - y \rno^2,
\end{equation}
where $\beta>0$ is a penalty parameter. Clearly, model \eqref{eq:super-resolution} is a special case of \eqref{eq:problem} by setting
\begin{align*}
&f_1(x) = \frac{1}{2} \lno SKx -b \rno_2^2, \quad g_1(x)=0,\quad f_2(y) = \tau \lno y \rno_1, \quad g_2(y)=\tau\alpha \lno y \rno_{2,1} \\
&h^+(x,y) = \frac{\beta}{2}\lno \D x-y \rno^2,\quad h^-(x,y)=0,
\end{align*}
respectively. Therefore, our algorithm is able to find a solution of \eqref{eq:super-resolution}. On the other hand, when the DC regularized part is regarded as a general nonconvex function, model \eqref{eq:super-resolution} falls into the case discussed in \cite{BST14}. So, we here employ the PALM algorithm (see \cite{BST14} and also \eqref{eq:PALM}) to solve \eqref{eq:super-resolution}, which will be compared with our UBAMA for the purpose of demonstrating the efficiency of our approach.

Hereafter, we present the details of implementing our UBAMA to solve \eqref{eq:super-resolution}. First, we take the Bregman kernel functions $\psi_k(\cdot) = \frac{1}{2}\|\cdot\|_{M_k}^2$ and $\varphi_k(\cdot)= \frac{\nu_k}{2}\|\cdot\|^2$ for the Bregman proximal terms $\B_{\psi_k}$ and $\B_{\varphi_k}$, where $M_k$ is specified as $M_k=\mu_k K^\top K - K^\top S^\top SK$ with an appropriate $\mu_k>0$ such that $M_k$ is positive definite. To make our UBAMA for \eqref{eq:super-resolution} readable, we then list the algorithmic details in Algorithm \ref{alg:ubama-one}, which is equipped with the following stopping criterion:
\begin{equation}
{\rm Tol}:= \max\left\{ \frac{\lno x^{k+1} - x^k \rno}{\max\{1,\lno x^k \rno\}}, \frac{\lno y^{k+1} - y^k \rno}{\max\{1,\lno y^k \rno\}} \right\} < 10^{-4}.
\end{equation}
\begin{algorithm}[!htb]
	\caption{Details of applying UBAMA  to \eqref{eq:super-resolution}.}\label{alg:ubama-one}
	\begin{algorithmic}[1]
		\STATE Select $\mu_k$ satisfying $M_k = \mu_k K^\top K -  K^\top S^\top SK  \succ 0$, $\nu_k>0$ and starting point $(x^0,y^0)$.
		\FOR{$k=0,1,2,\ldots$}
		\STATE Compute $x^{k+1}$ via
		\begin{align*}
		x^{k+1} & = \arg\min_x  \left\{\frac{1}{2} \lno SKx - b\rno^2 + \frac{\beta}{2}\lno \D x - y^k\rno^2 + \frac{1}{2}\lno x-x^k\rno_{M_k}^2\right\}\\
		& = \left( \beta \D^\top \D + \mu_k K^\top K \right)^{-1} \left[  K^\top S^\top b + \beta \D^\top y^k +  \left( \mu_k K^\top K - K^\top S^\top SK \right) x^k\right].
		\end{align*}		
		\STATE Take $\eta^{k+1}\in \partial \lno y^k\rno_{2,1}$ via
		\begin{equation*}
		[\eta^{k+1}]_{i,j} =
		\begin{cases} \frac{1}{\sqrt{[y^k]_i^2 + [y^k]_j^2}}\left([y^k]_i, [y^k]_j\right),\; &\text{if $[y^k]_i^2 + [y^k]_j^2 \ne 0$}, \\							(0,0),\; &\text{otherwise},
		\end{cases}\;\;\;
		i,j= 1,2,\ldots,n.
		\end{equation*}
		\STATE Update $y^{k+1}$ via
		\begin{align}\label{eq:ubama-y-sub-tv}
		y^{k+1} & =\arg\min_y \left\{\tau \lno  y\rno_1 - \tau\alpha \left \langle \eta^{k+1}, y\right\rangle + \frac{\beta}{2}\lno y-\D x^{k+1} \rno^2 + \frac{\nu_k}{2} \lno y - y^k\rno^2\right\}  \\
		& = \shrink \left( \frac{1}{\beta + \nu_k}\left(\beta \D x^{k+1} + \nu_k y^k+\tau\alpha \eta^{k+1}\right), \frac{\tau}{\beta+\nu_k} \right).\nonumber
		\end{align}
		\ENDFOR
	\end{algorithmic}
\end{algorithm}
We can see from Algorithm \ref{alg:ubama-one} that each subproblem of our UBAMA enjoys a closed-form solution.  It is noteworthy that there are some other choices on the Bregman kernel functions. For example, when taking $\psi_k(\cdot) = \frac{1}{2}\|\cdot\|_{M_k}^2$  with $M_k=c_k I - \beta\D^\top \D$,  the $x$-subproblem \eqref{algeq:update_x} of UBAMA is specified as 
\begin{align}\label{eq:xsub-palm}
x^{k+1} & = \arg\min_x \left\{ \frac{1}{2} \lno SKx - b\rno^2 + \frac{\beta}{2}\lno \D x - y^k\rno^2 + \frac{1}{2}\lno x-x^k\rno_{M_k}^2\right\}\\
& = \arg\min_x  \left\{\frac{1}{2} \lno SKx - b\rno^2 + \beta \left\langle \D^\top \left(\D x^k -  y^k\right), x-x^k\right\rangle + \frac{c_k}{2}\lno x-x^k\rno^2\right\} \nonumber\\
& = \left(  K^\top S^\top SK + c_k I\right)^{-1} \left(  K^\top S^\top b + c_k x^k - \beta \D^\top \left( \D x^k - y^k\right) \right),\nonumber
\end{align}
which precisely corresponds to the $x$-subproblem of  the PALM algorithm \eqref{eq:PALM}. Besides, the $y$-subproblem of the PALM algorithm for \eqref{eq:super-resolution} reads as
\begin{equation}\label{palm:ysub}
y^{k+1}=\arg\min \left\{\tau \left(\lno y \rno_1 - \alpha \lno y\rno_{2,1} \right) + \frac{\beta}{2}\lno \D x^{k+1} - y \rno^2+\frac{\nu_k}{2}\|y-y^k\|^2\right\},
\end{equation}
which in general requires an optimization solver to find an approximation instead of its accurate solution. In our experiments, by utilizing the simple DC structure, we employ one step approximation \eqref{eq:ubama-y-sub-tv} instead of calling an optimization solver to find an approximate solution of \eqref{palm:ysub} for the purpose of saving computing time, where $\nu_k$ is set as $\nu_k=d_k-\beta$. It is clear that the $x$-subproblem \eqref{eq:xsub-palm} of the PALM algorithm also enjoys a theoretical closed-form solution. However, the simultaneous appearance of $K$ and $S$ makes the $x$-subproblem extremely ill-posed so that solving it directly via the inverse formula will result in unstable solutions. Hence, we here compare our UBAMA (i.e., Algorithm \ref{alg:ubama-one}) with  the PALM algorithm (i.e., \eqref{eq:xsub-palm} and \eqref{eq:ubama-y-sub-tv}) for solving model \eqref{eq:super-resolution}, where the $x$-subproblem \eqref{eq:xsub-palm} is solved approximately by the well-developed Preconditioned Conjugate Gradient (PCG) method.
 We set the model parameter $\alpha$ as $\alpha = 0.1$. Besides, we consider two scenarios on the operator $K$, and set $(\tau,\beta) =(0.7 \delta, 50 \delta)$ and $(\tau,\beta) =(4\times 10^{-4}, 2\times 10^{-2})$ for the cases $K=I$ (i.e., image inpainting and denoising) and $K\neq I$ (i.e., image deconvolution and inpainting), respectively, where $\delta$ is the standard deviation of noise.

Now, we conduct numerical simulations of these algorithms on the first scenario: Image inpainting and denoising ($K=I$), where the images (i.e., {\sf Barbara}, {\sf Pepper}, {\sf Cameraman}, {\sf Roof}, and {\sf House}) are of size $256\times 256$ and are corrupted by adding Gaussian noise with different levels (i.e., $\delta=\{0.05,0.10,0.15,0.20\}$) and by dropping pixels with different masks (i.e., block-wise random, scratch, and text masks). In Figure \ref{fig:compare_images}, we summarize the observed images and recovered images by UBAMA and the PALM algorithm. It can be seen that both UBAMA and PALM have almost the same recovery quality. Furthermore, we report the number of iterations (Iter.), computing time in seconds (Time), the Signal-to-Noise Ratio (SNR) defined by 
$$\text{SNR}(x)=20\log_{10}\frac{\|x\|}{\|x^\star -x\|},$$
and the {\it structural similarity} (SSIM\footnote{A {\sc Matlab} package for	SSIM: https://ece.uwaterloo.ca/$\sim$z70wang/research/ssim/.}) index (see  \cite{WBSS04} and also \cite{LZOX15}), which are used to measure the quality of an image, where $x^\star$ and $x$ represent the ground truth image and a reconstructed image, respectively.
In Table \ref{tab:denoising_compare_indices}, we can see that our UBAMA runs a little faster than the PALM algorithm, which demonstrate that our UBAMA possessing easy subproblems can speed up the procedure of solving \eqref{eq:super-resolution} duo to its easy subproblems. As a visual complement for the convergence behavior, the convergence curves of objective values and SNR values with respect to iterations are demonstrated in Figure \ref{fig:compare_curves}, which also shows that our UBAMA converges faster than the PALM algorithm.

\begin{figure}[!tb]
	\begin{center}
		\includegraphics[width=\textwidth]{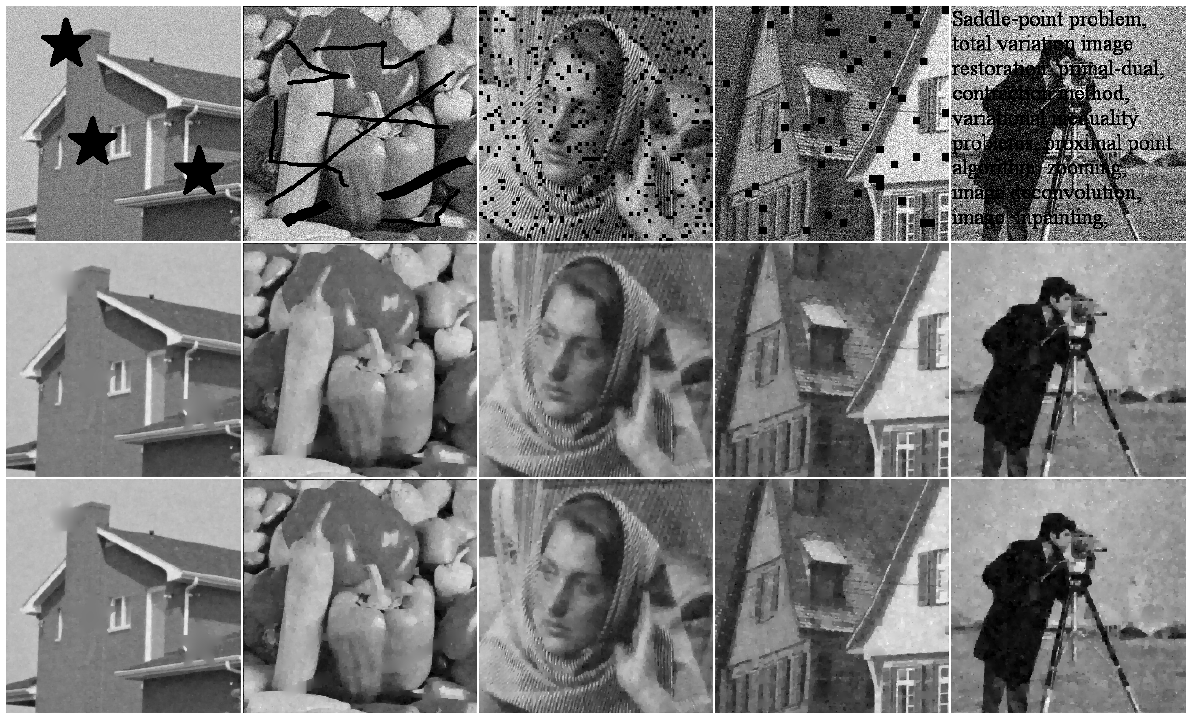}
	\end{center}	
	\caption{Results for image inpainting and denoising. From left to right: scenarios with noise levels $\delta = 0.05, 0.10, 0.10, 0.15, 0.20$, respectively. From top to bottom: the observed images, the reconstructed images by UBAMA and the PALM algorithm, respectively. }
	\label{fig:compare_images}
\end{figure}

\begin{figure}[!tb]
	\begin{center}
		 \includegraphics[width = 0.32\textwidth]{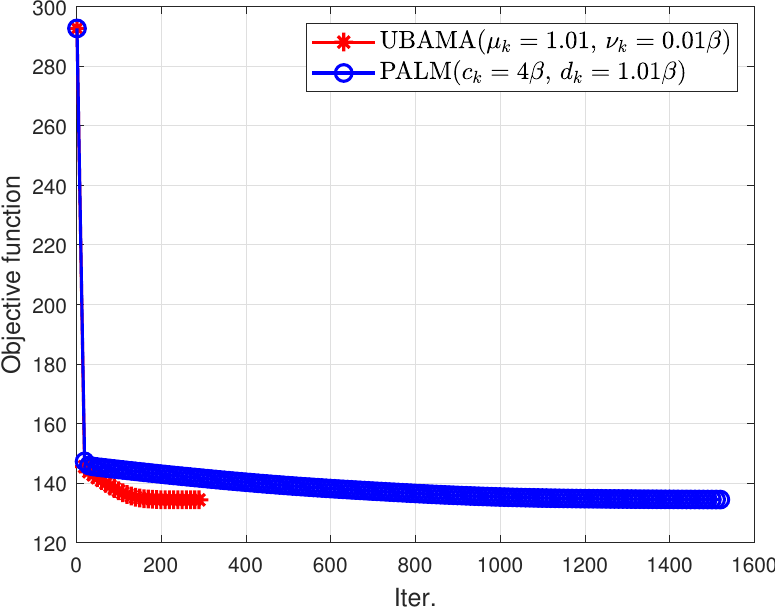}		
		\includegraphics[width = 0.32\textwidth]{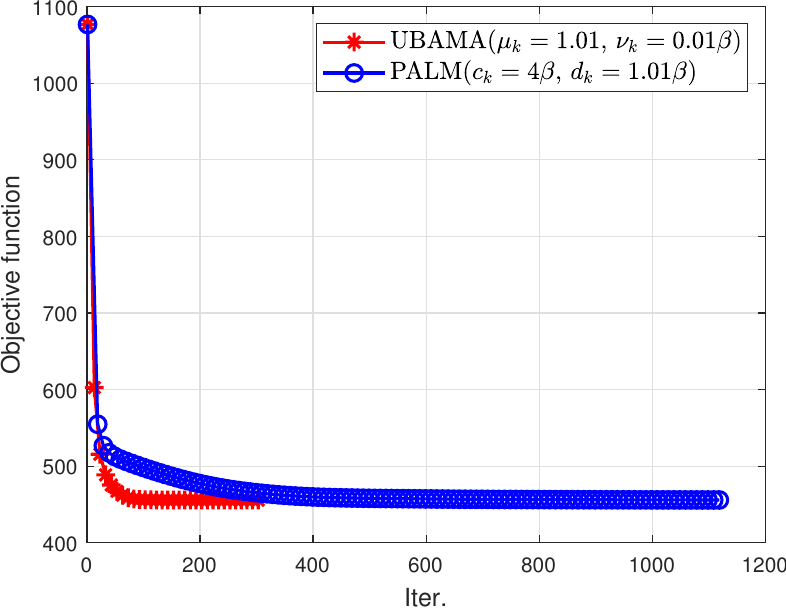}
		\includegraphics[width = 0.32\textwidth]{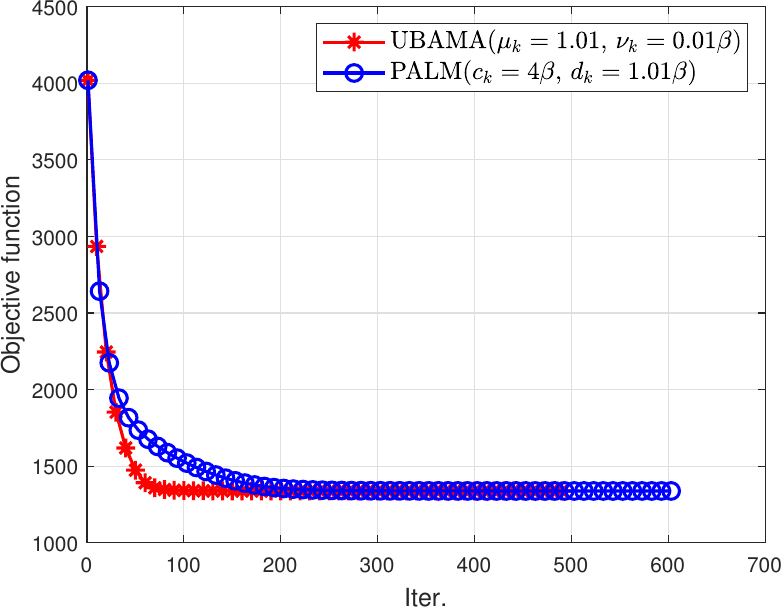}\\
		 \includegraphics[width = 0.32\textwidth]{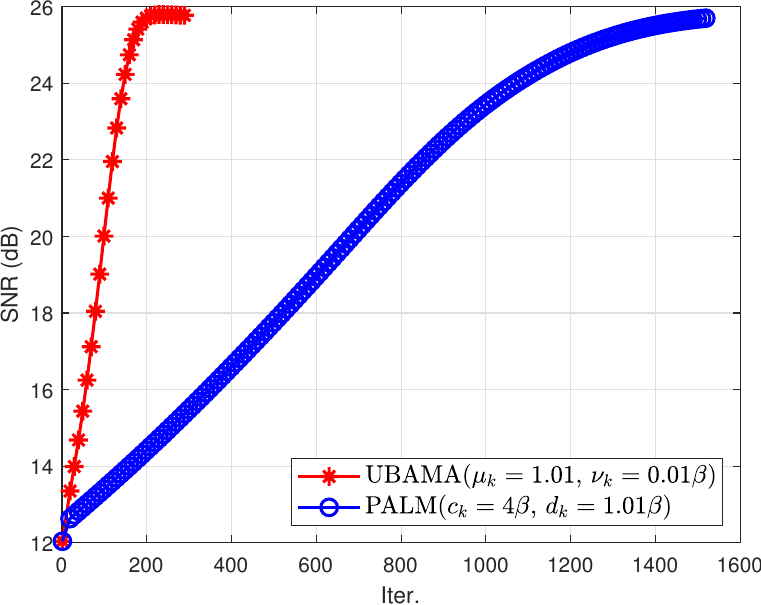}		
		\includegraphics[width = 0.32\textwidth]{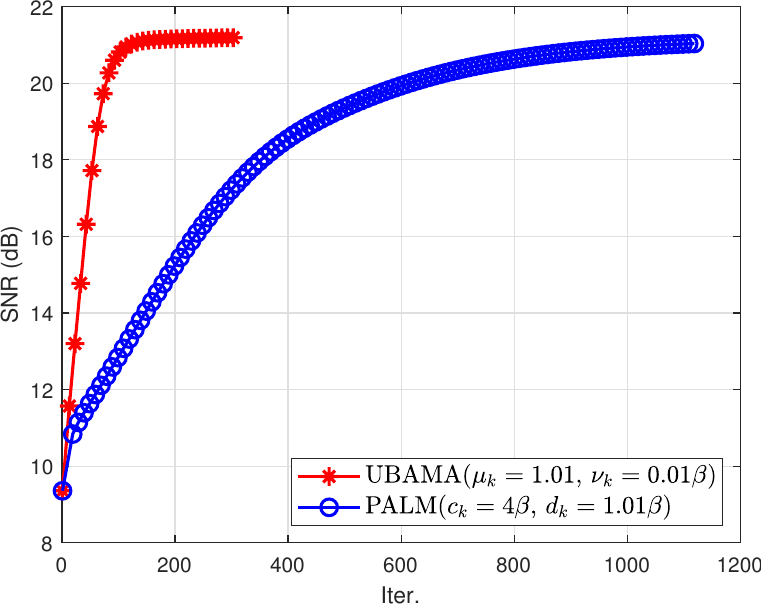}
		\includegraphics[width = 0.32\textwidth]{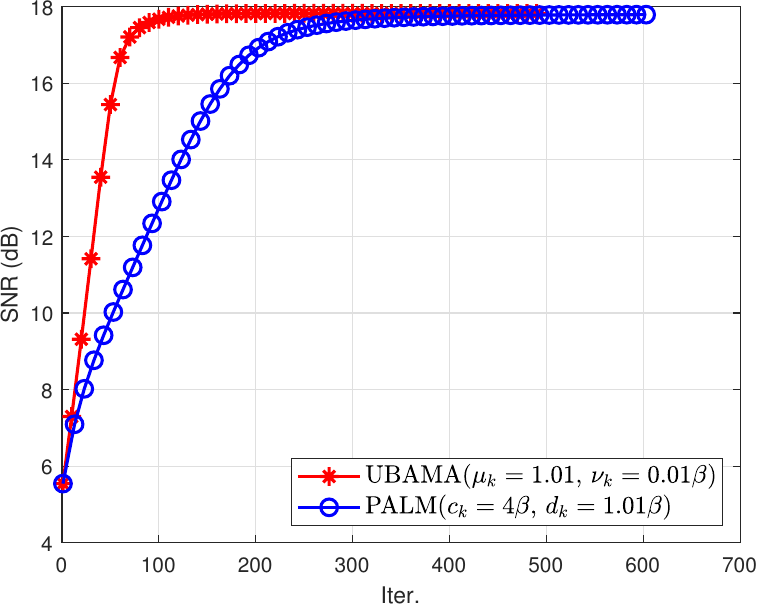}
	\end{center}
	\caption{Evolution of objective and SNR values with respect to iterations for image inpainting and denoising. From left to right: {\sf House}, {\sf Pepper} and {\sf Cameraman}.}
	\label{fig:compare_curves}
\end{figure}

\begin{table}[!tb]
	\caption{Numerical comparisons for image inpainting and denoising.}
	\label{tab:denoising_compare_indices}
	\centering
	\begin{tabular*}{\textwidth}{@{\extracolsep{\fill}}ccccc}
		\toprule
		Methods    && UBAMA && PALM \\ 
		\cline{1-1} \cline{3-3}  \cline{5-5} 
		Test Image && SNR / SSIM / Iter. / Time(s) && SNR / SSIM / Iter. / Time(s) \\ \midrule 
		{\sf House}      && 25.778 / 0.8483 / 289 / 1.82 && 25.702 / 0.8474 / 1519 / 7.51 \\
		{\sf Pepper}     && 21.191 / 0.7959 / 303 / 2.03 && 21.040 / 0.7946 / 1119 / 5.61 \\ 
		{\sf Barbara}    && 17.686 / 0.6920 / 258 / 1.65 && 17.684 / 0.6919 / 493 / 2.72 \\ 
		{\sf Roof}       && 17.302 / 0.6217 / 260 / 1.52 && 17.295 / 0.6215 / 1026 / 5.46 \\ 
		{\sf Cameraman}  && 17.823 / 0.6458 / 490 / 2.95 && 17.794 / 0.6456 / 603 / 2.98 \\ 
		\bottomrule
	\end{tabular*}
\end{table}

Below, we turn our attention to the second scenario: Image deblurring and inpainting (i.e., $K\neq I$), where the images are blurred by {\sc Matlab} script {\sf K=fspecial('disk',radius)} and some pixels are dropped in three ways as used in the above experiments. 
Also, some low-level Gaussian noise (i.e., $\delta=0.01$) is added in these corrupted images. In this part, we set the blur kernel size as $5\times 5$. As we have mentioned, it is not practical to solve the $x$-subproblem \eqref{eq:xsub-palm} directly for the case $K\neq I$. Hence, we employ the PCG method to find its approximate solution. In this situation, we set the tolerance of the optimization subroutine (i.e., PCG method) as $10^{-p}$ for the PALM algorithm, which means that the PCG method returns an approximate solution with precision $10^{-p}$. So, we are interested in the importance of the accuracy of solving subproblems for the numerical performance of the PALM algorithm. 
Here, we consider three different tolerances $\{10^{-3}, 10^{-4},10^{-5}\}$ (denoted by PALM(1e-3), PALM(1e-4), PALM(1e-5), respectively) for the optimization subroutine. The numerical results for this scenario are shown in Figures \ref{fig:blur_compare_images} and \ref{fig:blur_compare_curves}, and more detailed values are summarized in Table \ref{tab:deblurring_compare_indices}. 
It can be easily seen from the reported results that the UBAMA, PALM(1e-4), PALM(1e-5) achieve almost the same recovery quality for these images. However, we can see that when the PALM algorithm is equipped with a low-precision subroutine (i.e., low-accuracy solutions of the subproblems), the PALM algorithm does not work well in terms of image quality for some cases, e.g., the  image {\sf Pepper} with a scratch mask missing. 
The numerical results in Table \ref{tab:deblurring_compare_indices} demonstrate that our UBAMA runs much faster than the PALM algorithm, when PALM requires high-precision subproblems' solutions. These results further support that our structure-exploiting algorithm UBAMA is efficient and reliable for solving generalized DC programming \eqref{eq:problem}.

\begin{table}[!tb]
	\caption{Numerical comparisons for image deblurring, inpainting and denoising.}\label{tab:deblurring_compare_indices}
	\centering{\small
		\begin{tabular*}{\textwidth}{@{\extracolsep{\fill}}ccccccc}
			\toprule
			Test Image && {\sf Barbara} && {\sf Pepper }\\ 
				\cline{1-1}\cline{3-3}  \cline{5-5} 
			Method     && Obj. / SNR / SSIM / Iter. / Time && Obj. / SNR / SSIM / Iter. / Time \\ \midrule 
			UBAMA      && 4.202 / 19.240 / 0.7719 / 401 / 5.32  && 3.740 / 22.914 / 0.8196 / 3513 / 42.92 \\ 
			PALM(1e-3) && 4.266 / 19.419 / 0.7712 / 107 / 7.99  && 4.005 / 14.471 / 0.7412 / 156 / 12.31 \\ 
			PALM(1e-4) && 4.204 / 19.282 / 0.7727 / 235 / 25.20  && 3.739 / 23.073 / 0.8213 / 526 / 60.42 \\ 
			PALM(1e-5) && 4.202 / 19.243 / 0.7720 / 278 / 37.19  && 3.737 / 23.181 / 0.8225 / 507 / 87.12 \\ 
			\toprule
			Test Image &&{\sf House} && {\sf Cameraman} \\ 
				\cline{1-1}\cline{3-3}  \cline{5-5} 
			Method     && Obj. / SNR / SSIM / Iter. / Time && Obj. / SNR / SSIM / Iter. / Time \\ \midrule 
			UBAMA      && 2.734 / 26.621 / 0.8025 / 374 / 4.19 && 3.826 / 22.467 / 0.7900 / 311 / 3.63 \\ 
			PALM(1e-3) && 2.766 / 26.975 / 0.8028 / 133 / 9.06 && 3.946 / 21.778 / 0.7735 / 71 / 5.08 \\ 
			PALM(1e-4) && 2.736 / 26.685 / 0.8024 / 259 / 22.85 && 3.828 / 22.460 / 0.7894 / 240 / 26.05 \\ 
			PALM(1e-5) && 2.734 / 26.622 / 0.8025 / 309 / 34.42 && 3.826 / 22.465 / 0.7900 / 240 / 32.58 \\ 
			\bottomrule
	\end{tabular*}}
\end{table}

\begin{figure}[!tb]
	\begin{center}
		\includegraphics[width=0.8\textwidth]{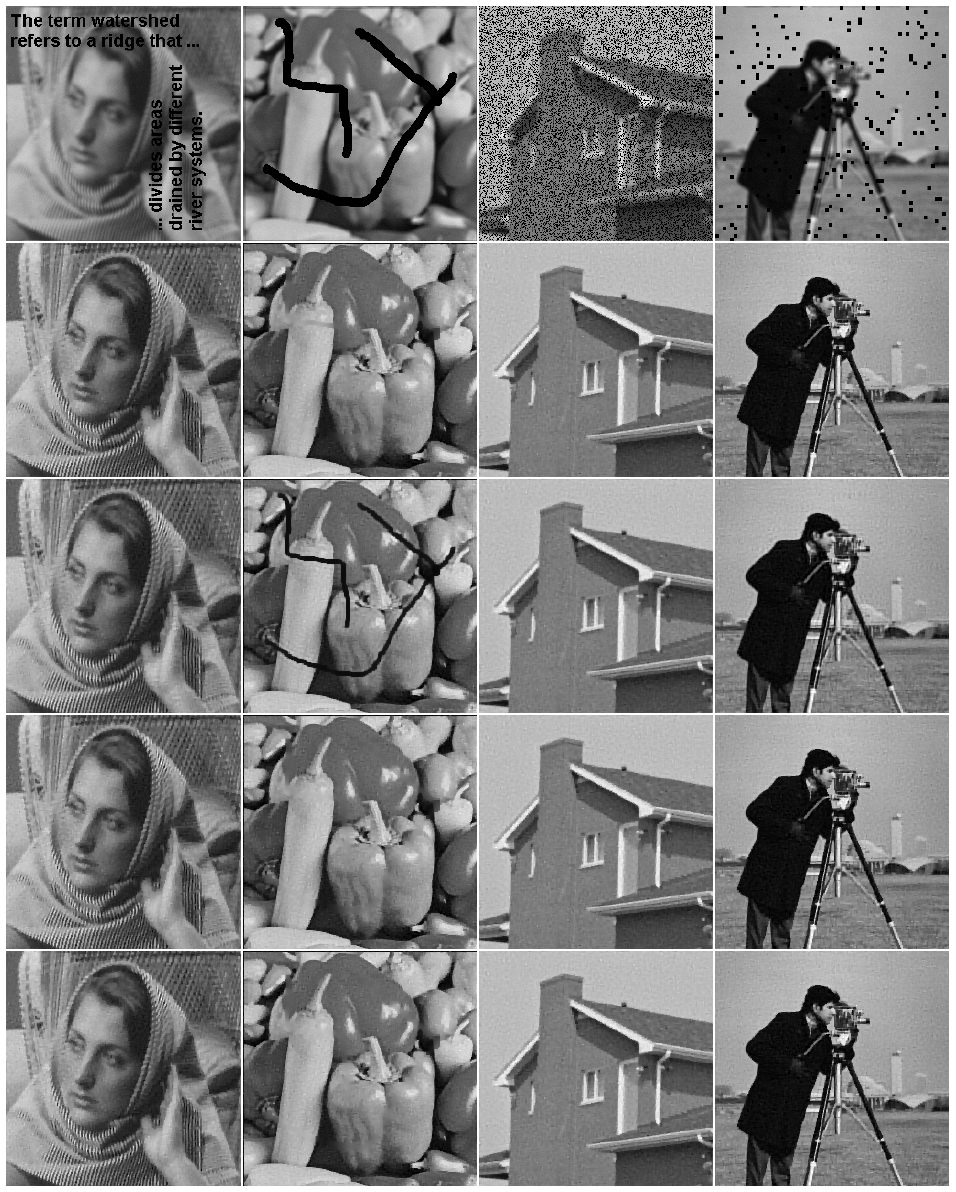}
	\end{center}
	\caption{Results for image deblurring, inpainting and denoising. Each observed image is blurred by Gaussian kernel of size $5\times 5$ with standard deviation $5$ and corrupted by adding Gaussian white noise with standard deviation $\delta = 0.01$. From top to bottom: the observed images, images restored by UBAMA, PALM(1e-3), PALM(1e-4), PALM(1e-5), respectively.}
	\label{fig:blur_compare_images}
\end{figure}

\begin{figure}[!tb]
	\begin{center}
		\includegraphics[width=0.32\textwidth]{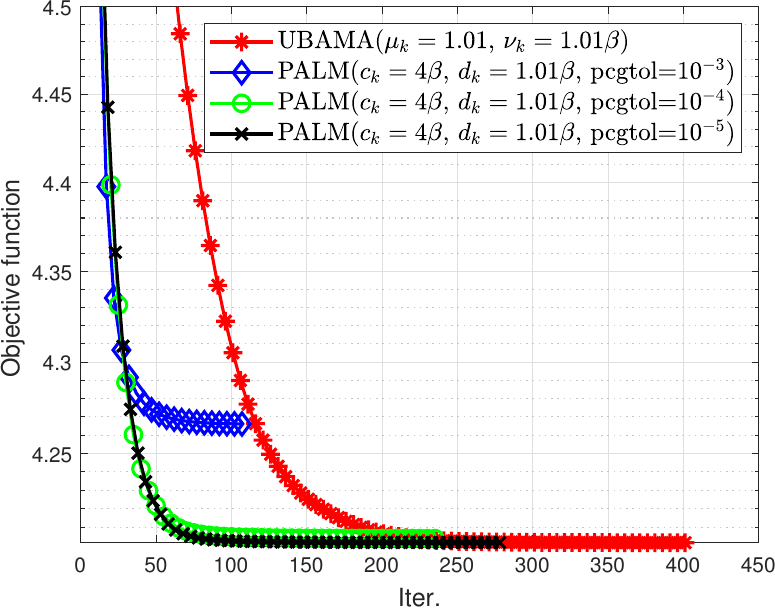}
		\includegraphics[width=0.32\textwidth]{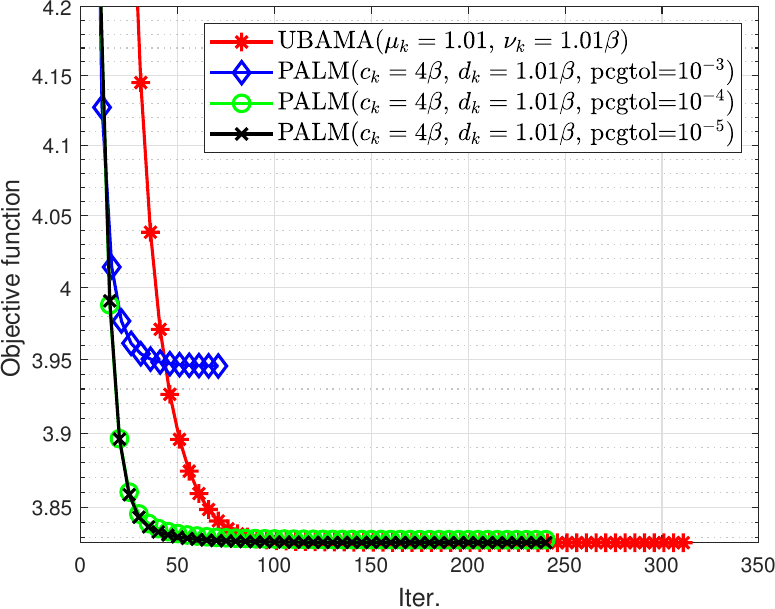}
		\includegraphics[width=0.32\textwidth]{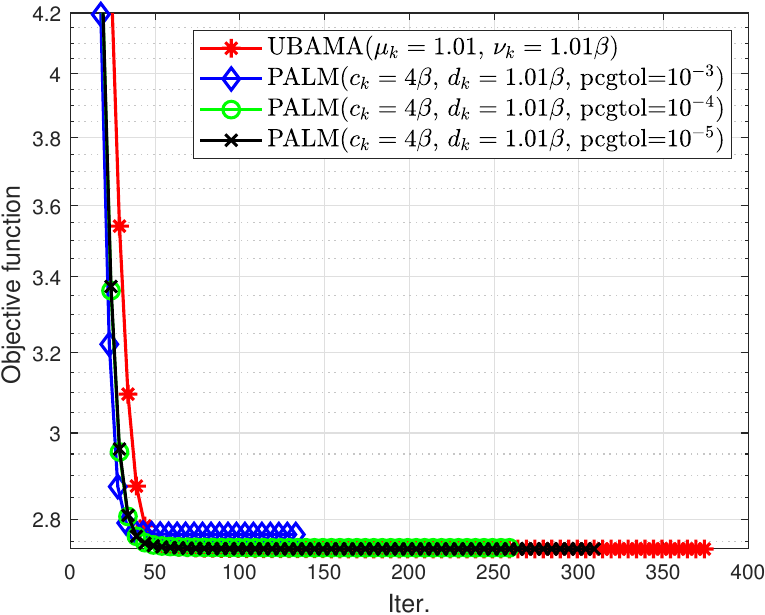}\\
		\includegraphics[width=0.32\textwidth]{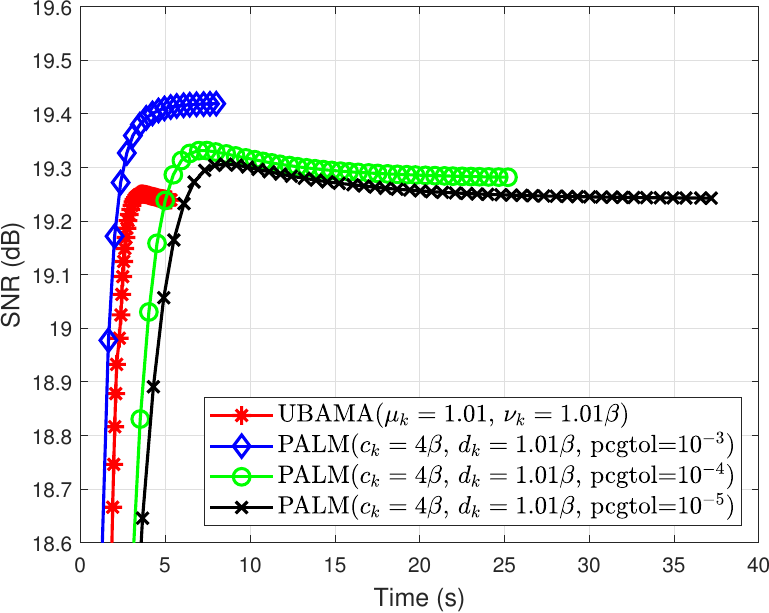}
		\includegraphics[width=0.32\textwidth]{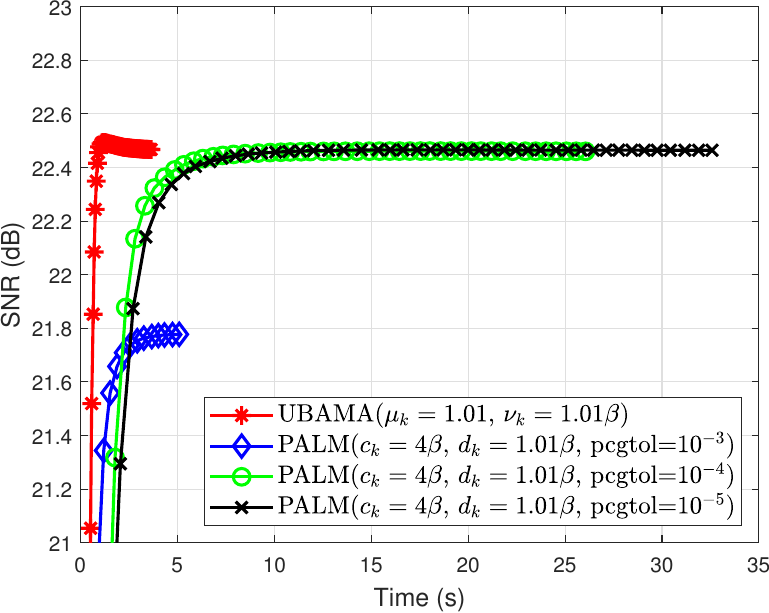}	
		\includegraphics[width=0.32\textwidth]{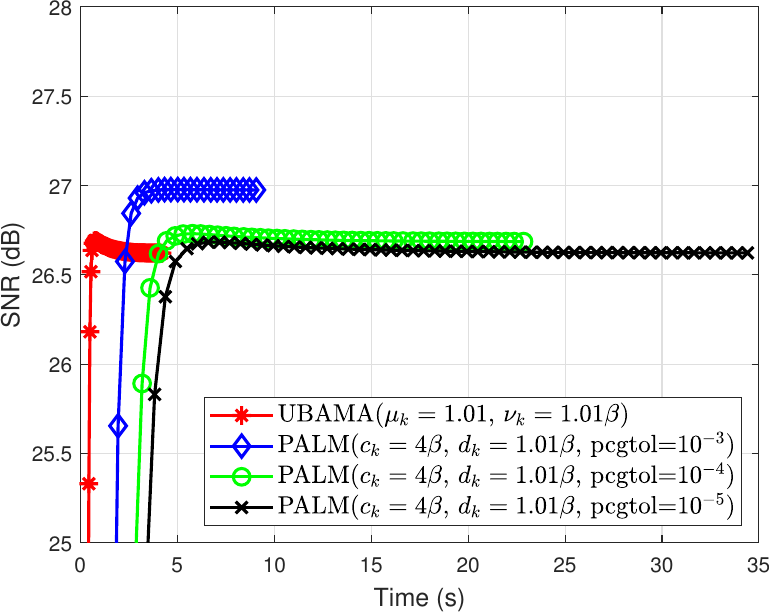}
	\end{center}
	\caption{Convergence curves for image deblurring, inpainting and denoising. From left to right: {\sf Barbara} and {\sf Cameraman} and {\sf House}.}
	\label{fig:blur_compare_curves}
\end{figure}

\subsection{Robust principal component analysis}\label{subsec:rpca}
The model \eqref{eq:super-resolution} in Section \ref{subsec:image} has only one DC part. Now, in this subsection, we will consider a more general case where there exist two separable DC parts in the objective function. More concretely, we consider the well-known Robust Principal Component Analysis (RPCA, see \cite{CLMW11}), which is a general framework to extract a low-rank matrix and a sparse one from an incomplete and noisy observation. Mathematically, the most general form of the RPCA problem can be expressed as follows:
\begin{equation}\label{eq:rpca-lin}
\min_{X\in\RR^{m\times n},Y\in\RR^{m\times n}}\;\left\{~ \text{rank}(X) + \tau \|Y\|_0\;\; | \;\; \|\mathcal{P}_\Omega(X+Y-B)\|_F\leq \delta~\right\},
\end{equation}
where $\text{rank}(X)$ is the rank function, $\|Y\|_0$ represents the number of nonzero components of $Y$, $B$ is the observed data matrix, $\delta$ represents the standard deviation of Gaussian white noise, $\Omega$ is a subset of the index set of entries $\{1,2,\ldots,m\}\times \{1,2,\ldots,n\}$, $\mathcal{P}_\Omega(\cdot):\RR^{m\times n}\to \RR^{m\times n}$ is the orthogonal projection onto the span of matrices vanishing outside of $\Omega$ so that the $ij$-th entry of $\mathcal{P}_\Omega(B)$ is $B_{ij}$ if $(i,j)\in \Omega$ and zero otherwise. Such a problem has been proven to be NP-hard. Therefore, one of the most popular ways is the convex relaxation approach (see \cite{CLMW11}), which employs $\|X\|_*$ and $\|Y\|_1$ to approximate $\text{rank}(X)$ and $\|Y\|_0$, respectively. In \cite{SXY13}, the authors introduced a novel nonconvex formulation for the RPCA problem using the capped trace norm and the capped $\ell_1$-norm. Here, we accordingly follow their idea to reformulate \eqref{eq:rpca-lin} as follows:
\begin{equation}\label{eq:rPCA}
\min_{X,Y} \left\{\underbrace{\lno X \rno_* - g_1\left( X;\kappa_1\right)}_{\theta_1(X):=\lno X \rno_{*,\kappa_1}}  + \underbrace{\tau \left(\lno Y\rno_1 - g_2\left( Y;\kappa_2  \right) \right)}_{\theta_2(Y)=\lno Y \rno_{1,\kappa_2}} + \underbrace{ \frac{1}{2\lambda}\lno \mathcal{P}_\Omega(X+Y-B)\rno_F^2}_{h^+(X,Y)}\right\},
\end{equation}
where 
\begin{equation*}
g_1\left( X;\kappa_1\right) = \sum_i^{\min\{m,n\}} \max\left\{ \sigma_i(X)-\kappa_1,0\right\}\quad \text{and}\quad 
g_2\left( Y;\kappa_2  \right) = \sum_i^m\sum_j^n \max\left\{ |Y_{ij}|-\kappa_2,0 \right\},
\end{equation*}
and $\lno X \rno_{*,\kappa_1}$ and $\lno Y \rno_{1,\kappa_2}$ are the so-called capped trace norm and the capped $\ell_1$-norm, respectively.
Obviously, \eqref{eq:rPCA} falls into the case of model \eqref{eq:problem} with two separable DC parts, and our UBAMA is applicable to such a problem. According to \cite{SXY13}, we note that the subdifferential of $g_1\left( X;\kappa_1\right)$ is given by 
\begin{equation}\label{eq:subg1}
\partial g_1(X;\kappa_1):=\left\{ U\text{diag}(w)V^\top\;|\;w\in W^*\right\} 
\end{equation}
where $U$ and $V$ are the left and right singular vectors of $X$, respectively, and 
$$W^*=\left\{w\in\RR^r\;\Big{|}\;  w_i\in
\begin{cases}
\{1\}, & \text{if } \sigma_i(X)>\kappa_1, \\
\{0\}, & \text{if } \sigma_i(X)<\kappa_1, \\
[0,1], & \text{otherwise}.
\end{cases}
\right\}$$
with $r$ being the rank of $X$. Besides, we also have the subdifferential of $g_2\left( Y;\kappa_2  \right)$ given by
\begin{equation}\label{eq:subg2}
\partial g_2\left( Y;\kappa_2  \right)=\left\{ V\in\RR^{m\times n}\; \Big{|}\;
V_{ij} \in \begin{cases}   
\{1\}, &\text{if } Y_{ij}>\kappa_2,\\
[0,1], &\text{if } Y_{ij}=\kappa_2,\\
\{0\}, &\text{if } |Y_{ij}|<\kappa_2,\\
[-1,0], &\text{if } Y_{ij}=-\kappa_2,\\
\{-1\}, &\text{if } Y_{ij}<-\kappa_2.
\end{cases}\quad\right\}.
\end{equation}
Due to the appearance of $\mathcal{P}_\Omega$, we accordingly take the kernel functions as $\psi_k(\cdot) = \frac{1}{2}\|\cdot\|_{M_k}^2$ and $\varphi_k(\cdot)= \frac{1}{2}\|\cdot\|^2_{N_k}$ for the Bregman proximal terms $\B_{\psi_k}$ and $\B_{\varphi_k}$, where $M_k$ and $N_k$ are specified as $M_k=\frac{1}{\lambda}\left( \mu_k I-\mathcal{P}_\Omega^\top \mathcal{P}_\Omega\right)$ and $N_k = \frac{1}{\lambda}\left( \nu_kI-\mathcal{P}_\Omega^\top \mathcal{P}_\Omega\right)$ with appropriate $\mu_k>0$ and $\nu_k>0$, respectively, such that $M_k$ and $N_k$ are positive definite. With the above preparations, the algorithmic details of applying our UBAMA to \eqref{eq:rPCA} are summarized in Algorithm \ref{alg:ubama-two}.

\begin{algorithm}[!htb]
	\caption{UBAMA for~\eqref{eq:rPCA}.}\label{alg:ubama-two}
	\begin{algorithmic}[1]
		\STATE Select $M_k = \frac{1}{\lambda}\left( \mu_k I-\mathcal{P}_\Omega^\top \mathcal{P}_\Omega\right)$, $N_k = \frac{1}{\lambda}\left( \nu_kI-\mathcal{P}_\Omega^\top \mathcal{P}_\Omega\right)$ and starting points $X^0,Y^0$.
		\FOR{$k=0,1,2,\ldots$}
		\STATE Take $\xi^k \in \partial g_1(X^k;\kappa_1)$ by \eqref{eq:subg1}, and compute $X^{k+1}$ via
		\begin{align*}
		X^{k+1} & = \arg\min_X \left\{\lno X\rno_*  - \langle  X,\xi^{k+1} \rangle + \frac{1}{2\lambda}\lno \mathcal{P}_\Omega(X+Y^k-B)\rno_F^2 + \frac{1}{2}\lno X - X^k\rno_{M_k}^2\right\} \\
		& = \SVT\left( X^k- \frac{1}{\mu_k}\mathcal{P}_\Omega^\top \mathcal{P}_\Omega\left( X^k+Y^k-B\right)+\frac{\lambda}{\mu_k}\xi^{k+1}, \frac{\lambda}{\mu_k}\right).
		\end{align*}
		\STATE Take $\eta^{k+1}\in \partial g_2(Y^k;\kappa_2)$ by \eqref{eq:subg2}, and update $Y^{k+1}$ via
		\begin{align*}
		Y^{k+1} & = \arg\min_Y \left\{\tau\lno Y \rno_1 - \tau \langle Y,\eta^{k+1}\rangle  + \frac{1}{2\lambda}\lno \mathcal{P}_\Omega(X^{k+1}+Y-B)\rno_F^2 + \frac{1}{2}\lno Y-Y^k\rno_{N_k}^2\right\}\\
		& = \shrink \left( Y^k - \frac{1}{\nu_k}\mathcal{P}_\Omega^\top \mathcal{P}_\Omega\left( X^{k+1}+Y^k-B\right)+\frac{\lambda\tau}{\nu_k}\eta^{k+1} ,\frac{\lambda\tau}{\nu_k} \right).
		\end{align*}
		\ENDFOR
	\end{algorithmic}
\end{algorithm}

When both DC parts (i.e., $\theta_1(X)$ and $\theta_2(Y)$) are regarded as general nonconvex functions, model \eqref{eq:rPCA} can be solved via the aforementioned algorithms in Section \ref{Sec:Intro}, e.g., \eqref{eq:pAM} and \eqref{eq:PALM}. For the purpose of comparison, we employ the most recent alternating DC algorithm (denoted by ADCA) introduced in \cite{PHLH22} to solve \eqref{eq:rPCA}, where the underlying $X$-subproblem is solved approximately via the state-of-the-art ADMM (see \cite{GM75} and also \cite{BPCPE10,CHY12}) by setting the stopping tolerance as $10^{-4}$. As shown in Remark \ref{rem:sc2}, the ADCA algorithm can be viewed as one of special cases of our UBAMA. Note that the coupling term $h^+(X,Y)$ in \eqref{eq:rPCA} is indeed a convex function. So, when reformulating \eqref{eq:rPCA} as
\begin{equation}\label{eq:rPCAv}
\min_{Z:=(X,Y)} \left\{\underbrace{\lno X \rno_* +\tau \lno Y\rno_1 }_{f(Z)} -\underbrace{ \left(g_1\left( X;\kappa_1\right) +g_2\left( Y;\kappa_2  \right) \right)}_{g(Z)} + \underbrace{ \frac{1}{2\lambda}\lno \mathcal{P}_\Omega(X+Y-B)\rno_F^2}_{h(Z)}\right\},
\end{equation}
we can see that \eqref{eq:rPCAv} is precisely the same as the generalized DC programming discussed in the literature (e.g., \cite{CHZ22,TFT22,WCP18}). Therefore, we can employ the DCA-type methods to find solutions of \eqref{eq:rPCAv} (see Algorithm \ref{alg:DCA-ADMM} for details and denote it by DCA-ADMM), where the subproblem \eqref{eq:dca-admm-sub} is also solved by the ADMM with stopping tolerance $10^{-4}$. In accordance with the first-order optimality condition of \eqref{eq:rPCA}, we employ the following stopping criterion 
\begin{equation}\label{eq:StopTol}
{\rm Tol}:= \max\left\{ \frac{\lno X^{k+1} - X^k \rno_F}{\max\{1,\lno X^k \rno_F\}}, \frac{\lno Y^{k+1} - Y^k \rno_F}{\max\{1,\lno Y^k \rno_F\}} \right\} \leq \epsilon.
\end{equation}
to return a pair of approximate solutions for all algorithms.

\begin{algorithm}[!htb]
	\caption{DCA with ADMM solver for \eqref{eq:rPCA}.}	\label{alg:DCA-ADMM}
	\begin{algorithmic}[1]
		\STATE Select starting points $X^0$ and $Y^0$.
		\FOR{$k=0,1,2,\cdots$}
		\STATE Compute $\xi^{k+1} \in \partial g_1(X^k;\kappa_1)$ and $\eta^{k+1} \in \partial g_2(Y^k;\kappa_2)$ via \eqref{eq:subg1} and \eqref{eq:subg2}, respectively.
		\STATE Update $(X^{k+1},Y^{k+1})$ via solving the following minimization problem by ADMM:
		\begin{equation}\label{eq:dca-admm-sub}
		\min\left\{ \lno X \rno_*+ \tau \lno Y \rno_1 - \langle \xi^{k+1}, L \rangle  - \tau \langle \eta^{k+1}, Y\rangle + \frac{1}{2\mu}\lno \mathcal{P}_\Omega(X+Y-B)\rno_F^2\right\}.
		\end{equation}
		\ENDFOR
	\end{algorithmic}
\end{algorithm}

In this subsection, we consider \eqref{eq:rPCA} with two different datasets. One is a synthetic dataset generated in a random way. Another one is a real-world surveillance video dataset that has been widely tested in the literature. For the experiments on synthetic data, we first generate a rank-$r$ matrix $X^*$ as a product of $QR^\top$, where $Q$ and $R$ are independent $n\times r$ matrices whose elements are i.i.d. random variables sampled from the standard Gaussian distribution $\mathcal{N}(0,1)$. Then, we generate a sparse matrix $Y^*$ with only $5\%$ nonzero entries, which are uniformly distributed in the interval $[-10, 10]$. The ground truth matrix is then corrupted by adding white noise $\mathcal{E} \in \mathcal{N}(0,0.01)$. To form an incomplete observation, we further randomly generate a sample operator $S$ with sample rate ${\sf sr}$. Then, the observed incomplete matrix $B$ is generated by $S(X^*+Y^* + \mathcal{E})$. In our experiments, we set the regularization coefficients $\tau = \frac{1}{\sqrt{n}}$ and $\lambda = \sqrt{{\sf sr} \times \sqrt{8n\times {\sf sr}}\delta}$, where $\delta=0.01$ is the noise level and ${\sf sr}=0.9$. The stopping tolerance $\epsilon$ in \eqref{eq:StopTol} is specified as $\epsilon = 10^{-4}$. For the parameters of our UBAMA, we take $\mu_k=1.01$ and $\nu_k=1.01$. Since the data is generated in a random way, we report the averaged performance on $10$ trials in Table \ref{tab:rpca_synthetic}, which clearly shows that our UBAMA takes much less computing time than the other two DCA-type solvers for relatively large-scale problems.

\begin{table}[!tb]
	\caption{Numerical comparisons for RPCA with synthetic data.}\label{tab:rpca_synthetic}
	\centering
	\begin{tabular*}{\textwidth}{@{\extracolsep{\fill}}ccccccc}\toprule
		$(n,r)$ & Method & $\frac{\lno X-X^*\rno_F}{\lno X^*\rno_F}$ ($\times 10^{-3}$) & $\frac{\lno Y-Y^* \rno_F}{\lno Y^* \rno_F}$  &  Iter. & Time(s) \\ \midrule 
		\multirow{3}{*}{$(256,8)$}
		& UBAMA      & $4.0409$ & $0.32620$ & $16$ & $0.4398$ \\ 
		& ADCA       & $3.9027$ & $0.32615$ & $13$ & $5.8429$ \\ 
		& DCA-ADMM   & $4.3047$ & $0.32643$ & $5$ & $3.8280$ \\ \midrule
		\multirow{3}{*}{$(512,16)$}
		& UBAMA      & $2.9431$ & $0.32322$ & $20$ & $3.2122$ \\ 
		& ADCA       & $2.9880$ & $0.32325$ & $12$ & $30.0875$ \\ 
		& DCA-ADMM   & $3.3176$ & $0.32361$ & $6$ & $24.4760$ \\ \midrule
		\multirow{3}{*}{$(768,24)$}
		& UBAMA      & $2.2068$ & $0.31973$ & $23$ & $12.5701$ \\ 
		& ADCA       & $2.2633$ & $0.31979$ & $13$ & $101.4250$ \\ 
		& DCA-ADMM   & $2.4971$ & $0.32008$ & $6$ & $72.9432$ \\ \midrule
		\multirow{3}{*}{$(1024,32)$}
		& UBAMA      & $1.9992$ & $0.31508$ & $25$ & $39.9955$ \\ 
		& ADCA       & $2.0360$ & $0.31513$ & $13$ & $285.7886$ \\ 
		& DCA-ADMM   & $2.2624$ & $0.31548$ & $7$ & $248.3535$ \\ 
		\bottomrule
	\end{tabular*}
\end{table}
Now, we consider two real-world video datasets (i.e., {\sf Lobby} of size $(m,n)=(20480 \times 150)$ and {\sf Hallairport} of size $(m,n)=(25344 \times 100)$). In this experiment, we follow the way used for synthetic data to degrade the video (i.e., the same noise level and sampling way), and set $\tau = \frac{1}{\sqrt{\max(m,n)}}$,  $\lambda = (\sqrt{m}+\sqrt{n})\cdot\delta\cdot\sqrt{{\sf sr}}$, and $\epsilon=10^{-3}$. Besides, we take $\mu_k=1.01$ and $\nu_k=1.01$ for UBAMA. Here, we fix the noise level $\delta=0.01$ and conduct three scenarios on the sample rate, i.e., ${\sf sr}=\{0.70,0.80,0.90\}$. In Tables~\ref{tab:rpca_real-Lobby} and \ref{tab:rpca_real-Hallairpot}, we report the rank ($\text{rank}(X)$) of the low-rank part $X$, the number of nonzero components of $Y$ ($\|Y\|_0$), the objective function value of \eqref{eq:rPCA} (Obj.), the number of  iterations (Iter.), the computing time in seconds (Time(s)), the relative error (Err.) defined by 
$$\text{Err.}=\frac{\lno \mathcal{P}_\Omega(X+Y-A)\rno_F}{\lno \mathcal{P}_\Omega(A)\rno_F},$$
where $A$ is the original clean video. We see from Tables \ref{tab:rpca_real-Lobby} and \ref{tab:rpca_real-Hallairpot} that the three algorithms achieve the same rank and almost the same objective values and relative errors. Comparatively, both ADCA and DCA-ADMM perform a little better than our UBAMA in terms of the sparsity of $Y$. However, our UBAMA runs much faster than ADCA and DCA-ADMM. Moreover, it can be seen from the recovered frames in Figure \ref{fig:rpca_compare_images} that the three algorithms can achieve almost the same quality of the background, foreground and the recovered frames. Therefore, these computational results efficiently supports the idea of this paper.

\begin{table}[!tb]
	\caption{Numerical comparisons for RPCA with a real-world dataset: video {\sf Lobby}.}\label{tab:rpca_real-Lobby}
	\centering
	\begin{tabular*}{\textwidth}{@{\extracolsep{\fill}}cccccccc}\toprule
		{\sf sr}&Method & $\text{rank}(X)$ & $\lno Y \rno_0$ ($\times 10^6$) & Obj.  & Err.& Iter. & Time(s) \\ \midrule 
		& UBAMA & $4$ & $1.0191$ & $317.03$ & $0.0212$ & $93$ & $78.92$ \\ 
		0.70 & ADCA & $4$ & $1.0189$ & $317.03$ & $0.0212$ & $29$ & $653.17$ \\ 
		& DCA-ADMM & $4$ & $1.0189$ & $317.02$ & $0.0212$ & $4$ & $149.30$ \\ 
		\midrule
		& UBAMA & $5$ & $1.0894$ & $331.60$ & $0.0209$ & $60$ & $46.59$ \\ 
		0.80 & ADCA & $5$ & $1.0892$ & $331.59$ & $0.0209$ & $22$ & $424.97$ \\ 
		& DCA-ADMM & $5$ & $1.0892$ & $331.59$ & $0.0209$ & $4$ & $123.74$ \\ 
		\midrule
		& UBAMA & $6$ & $1.1466$ & $345.33$ & $0.0208$ & $37$ & $27.63$ \\ 
		0.90 & ADCA & $6$ & $1.1464$ & $345.33$ & $0.0208$ & $21$ & $395.49$ \\ 
		& DCA-ADMM & $6$ & $1.1465$ & $345.32$ & $0.0208$ & $4$ & $122.57$ \\
		\bottomrule
	\end{tabular*}
\end{table}

\begin{table}[!tb]
	\caption{Numerical comparisons for RPCA with real-world data: video {\sf Hallairport}.}\label{tab:rpca_real-Hallairpot}
	\centering
	\begin{tabular*}{\textwidth}{@{\extracolsep{\fill}}cccccccc}\toprule
		{\sf sr}&Method & $\text{rank}(X)$ & $\lno S \rno_0$ ($\times 10^6$) & Obj.  & Err. & Iter. & Time(s) \\ \midrule 
		& UBAMA & $9$ & $0.9714$ & $513.97$ & $0.0138$ & $162$ & $82.75$ \\ 
		0.70 & ADCA & $9$ & $0.9704$ & $513.64$ & $0.0138$ & $80$ & $1071.70$ \\ 
		& DCA-ADMM & $9$ & $0.9703$ & $513.53$ & $0.0138$ & $6$ & $170.21$ \\ 
		\midrule
		& UBAMA & $12$ & $1.0358$ & $557.16$ & $0.0137$ & $131$ & $66.92$ \\ 
		0.80 & ADCA & $12$ & $1.0354$ & $557.02$ & $0.0137$ & $85$ & $1140.02$ \\ 
		& DCA-ADMM & $12$ & $1.0355$ & $556.95$ & $0.0137$ & $6$ & $177.44$ \\ 
		\midrule
		& UBAMA & $15$ & $1.0889$ & $599.14$ & $0.0136$ & $113$ & $57.30$ \\ 
		0.90 & ADCA & $15$ & $1.0889$ & $599.09$ & $0.0136$ & $91$ & $1193.42$ \\ 
		& DCA-ADMM & $15$ & $1.0889$ & $599.03$ & $0.0136$ & $5$ & $149.77$ \\ 
		\bottomrule
	\end{tabular*}
\end{table}

\begin{figure}[!tb]
	\begin{center}
		\includegraphics[width=\textwidth]{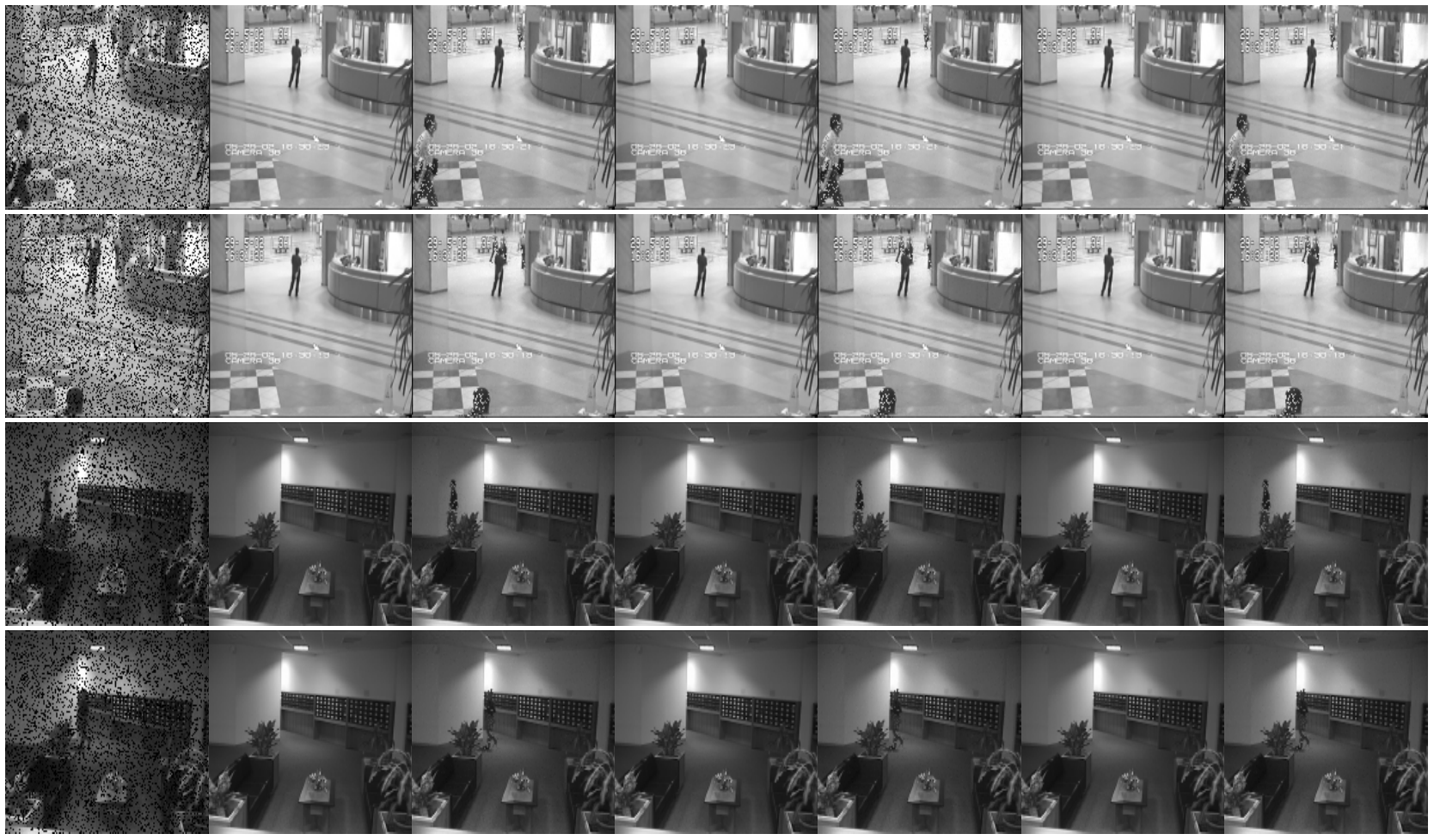}
	\end{center}
	\caption{Selected frames of the videos {\sf Hallairport} and {\sf Lobby}. The first column corresponds to the frames of the videos with $80\%$ observed information. The second and third columns are the recovered frames obtained by our UBAMA. The fourth and fifth columns are the frames obtained by ADCA. The last two columns are the frames obtained by DCA-ADMM. The second, fourth, and sixth columns are the background of the videos separated by UBAMA, ADCA, and DCA-ADMM. respectively.}
	\label{fig:rpca_compare_images}
\end{figure}

\subsection{Blind image deconvolution}\label{Sec:bid}
It is not difficult to observe that the nonconvexity of the above two models \eqref{eq:super-resolution} and \eqref{eq:rPCA} comes from their DC parts, while their coupling functions are indeed convex. In this subsection, we consider the well-studied yet challenging Blind Image Deconvolution (BID) problem (e.g., see \cite{CVR14,HNO06}), which has a nonconvex coupling function. Concretely, the task of BID is to recover both a sharp image $x\in\RR^m$ ($m=m_1\times m_2$ is the number of pixels of an image) and the unknown point spread function $y\in \RR^n$ (a small 2D blur kernel of size $n=n_1\times n_2$ pixels) from a given blurry and possibly noisy image $b$, i.e., solving the following inverse problem:
\begin{equation}
b = x \star y + \varepsilon,
\end{equation}
where $\varepsilon$ represents the noise and the operation $x \star y$ denotes the usual 2D modulo-$m_1,m_2$ discrete circular convolution operation defined by (and interpreting the image $x$ and the blur kernel $y$ as 2D arrays)
$$(x\star y)_{ij}=\sum_{s=0}^{n_1}\sum_{t=0}^{n_2} y_{st}x_{(i-s)_{{\text{mod} (m_1)}},(j-t)_{\text{mod} (m_2)}},\qquad 1\leq i\leq m_1,\;1\leq j\leq m_2,$$
which can also be expressed as the 2D discrete convolution by the matrix vector products of the form
$$u=x\star y \;\; \Leftrightarrow\;\; u=K(y)x \;\; \Leftrightarrow\;\; u=K(x)y$$
with $K(y)\in\RR^{m\times m}$ being a sparse matrix (each row holds the values of the blur kernel $y$) and $K(x)\in\RR^{m\times n}$ being a dense matrix (each column is given by a circularly shifted version of the image $x$). Generally, the blur kernel is assumed to be normalized, i.e., $y\in\Delta^n$, where $\Delta^n$ is the standard unit simplex given by \eqref{eq:simplex}. Moreover, the pixel intensities of the unknown sharp image $x$ are assumed to be normalized to the interval $[0,1]$, i.e., $x\in \mathbb{B}^m:=\{ x\in\RR^m\;|\; x_i\in [0,1],\; i=1,2\ldots,m\}$.
In what follows, we consider a classical BID model (e.g., see \cite{PDF15,PS16}), which reads as
\begin{equation}\label{eq:BID}
\min_{x,y} \;  \left\{\sum_{p=1}^8 \phi\left(\nabla_p x;\tau\right) + \frac{\lambda}{2}\left\| x \star y - b \right\|^2\; \Big{|}\; x\in \mathbb{B}^m,\; y\in\Delta^n\right\} ,
\end{equation}
where the function $\phi(\cdot~;\tau)$ is a differentiable robust error function promoting sparsity in its argument, which is  defined by
\begin{equation*}
\phi(x;\tau) = \sum_{i=1}^m\log\left(1+\tau x_i^2\right),\quad \text{for}\;\;x\in \RR^m,\;\; \tau>0,
\end{equation*}
and the linear operators $\nabla_p$ ($p=1,2,\ldots,8$) are the finite differences approximation to directional image gradients, which are given in \cite{PS16} as follows:
\begin{equation*}
\begin{array}{lll}
(\nabla_1 x)_{i,j}=x_{i+1,j}-x_{i,j}, & \qquad (\nabla_2 x)_{i,j}=x_{i,j+1}-x_{i,j}, & \\
(\nabla_3 x)_{i,j}=\frac{x_{i+1,j+1}-x_{i,j}}{\sqrt{2}}, & \qquad (\nabla_4 x)_{i,j}=\frac{x_{i+1,j-1}-x_{i,j}}{\sqrt{2}},&  \quad 1\leq i\leq m_1,\\
(\nabla_5 x)_{i,j}=\frac{x_{i+2,j+1}-x_{i,j}}{\sqrt{5}}, & \qquad (\nabla_6 x)_{i,j}=\frac{x_{i+2,j-1}-x_{i,j}}{\sqrt{5}}, &\quad 1\le j\le m_2.\\
(\nabla_7 x)_{i,j}=\frac{x_{i+1,j+2}-x_{i,j}}{\sqrt{5}}, & \qquad (\nabla_8 x)_{i,j}=\frac{x_{i-1,j+2}-x_{i,j}}{\sqrt{5}},&
\end{array}
\end{equation*}
In our experiments, we assume natural boundary conditions, i.e., $(\nabla_p)_{i,j}=0$, whenever the operator references a pixel location that lies outside the domain. However, it has been emphasized in \cite{PS16} that model \eqref{eq:BID} performs well for favor sharp images, while it could possibly be a bad choice for textured images. 

By invoking the indicator functions associated to $\mathbb{B}^m$ and $\Delta^n$, we can easily reformulate \eqref{eq:BID} as 
\begin{equation}\label{eq:BIDr}
\min_{x,y} \;  \left\{\;\underbrace{\mathcal{I}_{\mathbb{B}^m}(x)}_{f_1(x)}\;+\;\underbrace{\mathcal{I}_{\Delta^n}(y)}_{f_2(y)}\;-\;\underbrace{\left(-\sum_{p=1}^8 \phi\left(\nabla_p x;\tau\right) - \frac{\lambda}{2}\left\| x \star y - b \right\|^2\right)}_{h^-(x,y)} \;\right\} ,
\end{equation}
which is clearly a special case of model \eqref{eq:problem}. Moreover, it has been shown in \cite{PS16} that the coupling function $h^-(x,y)$ defined in \eqref{eq:BIDr} is smooth with block Lipschitz continuous gradients given by
\begin{equation*}
\begin{cases}
\nabla_x h^-(x,y)=-2\tau \sum_{p=1}^{8}\nabla_p^\top \text{\bf vec}\left(\frac{(\nabla_p x)_{i,j}}{1+\tau(\nabla_p x)_{i,j}^2}\right)_{i,j=1}^{m_1,m_2}-\lambda K^\top(y)\left(K(y)x-b\right), \\
\nabla_y h^-(x,y) =-\lambda K^\top(x)\left(K(x)y-b\right),
\end{cases}
\end{equation*}
where the operation ``$\text{\bf vec}(\cdot)$'' corresponds to the formation of a vector from the values passed to its argument. In \cite{PS16}, we notice that their method requires a solver to compute the projection onto the unit simplex with $\mathcal{O}(n\log n)$ time complexity. In this section, we shall consider two different Bregman proximal terms for the $y$-subproblem for the purpose of highlighting the flexibility of our algorithmic framework, especially of showing that one of our choices can avoid computing the projection onto the unit simplex.  Below, we elaborate the details of applying UBAMA to \eqref{eq:BID} as follows.
\begin{itemize}
\item Firstly, the $x$-subproblem is specified as
\begin{equation}\label{eq:bid-x}
x^{k+1} = \arg\min_{x\in\mathbb{B}^m} \left\{\mathcal{I}_{\mathbb{B}^m}(x) - \left\langle x-x^k,  \nabla_x h^-(x^k,y^k) \right\rangle + \B_{\psi_k}(x,x^k)  \right\}.
\end{equation}
Considering the simple structure of $\mathbb{B}^m$, we take $\psi_k(x)=\frac{c_k}{2}\|x\|^2$ in \eqref{eq:bid-x}. Then, the updating scheme of $x$ immediately reads as 
\begin{align}\label{eq:bid-xn}
x^{k+1} &= {\Proj}_{\mathbb{B}^m}\left(x^k + \frac{1}{c_k} \nabla_x h^-(x^k,y^k)  \right) \nonumber \\
&=\max\left\{\min\left\{x^k +\frac{1}{c_k} \nabla_x h^-(x^k,y^k),1\right\},0\right\}.
\end{align}
\item Secondly, the $y$-subproblem is specified as
\begin{equation}\label{eq:bid-y}
y^{k+1}= \arg\min_{y\in\Delta^n} \left\{\mathcal{I}_{\Delta^n}(y) - \left\langle y-y^k, \nabla_y h^-(x^{k+1},y^k)\right\rangle + \B_{\varphi_k}(y,y^k)\right\}.
\end{equation}
For the above subproblem, we have two options on $\varphi_k(\cdot)$.
\begin{itemize}
\item When taking $\varphi_k(y)=\frac{d_k}{2}\|y\|^2$, the iterative scheme \eqref{eq:bid-y} immediately is specified as
\begin{equation}\label{eq:bid-y-e}
y^{k+1}=\Proj_{\Delta^n}\left(y^k + \frac{1}{d_k}\nabla_y h^-(x^{k+1},y^k) \right), 
\end{equation}
which can be solved by the method introduced in \cite{DSSC08}.
\item When taking $\varphi_k(y)=d_k \sum_{i=1}^n y_i\log(y_i)$, the iterative scheme \eqref{eq:bid-y} reads as
\begin{equation}\label{eq:bid-y-ne}
y^{k+1}=\frac{y^k\odot e^{-\varsigma^k/d_k}}{\sum_{j=1}^m y_j^ke^{-\varsigma^k_j/d_k}},
\end{equation}
where $\varsigma^k:=\nabla_y h^-(x^{k+1},y^k)$ and ``$\odot$'' is the component-wise product of vectors.
\end{itemize}
Clearly, we can see from \eqref{eq:bid-y-e} and \eqref{eq:bid-y-ne} that the latter has an explicit form, which is simpler than the former. So, \eqref{eq:bid-y-ne} can save some computational cost to accelerate our algorithm.
\end{itemize}

Now, we turn our attention to the numerical behaviors of the UBAMA equipped with different Bregman kernel functions. Here, we consider two blurry images {\sf Peppers} ($384\times 512$) and {\sf Books} ($512\times 340$), which are convoluted by blur kernels of size $27 \times 13$ and $31\times 31$, respectively. Empirically, we choose the model parameters as $\lambda = 5\times 10^5$ and $\tau= 10^4$ for this experiment. The UBAMA starts from $x^0$ and $y^0$, which are initialized to be the blurry image $b$ and  the average filters of size $n$, respectively. We set the maximal iterate number as $2000$ and the algorithmic parameters $c_k$ and $d_k$ are determined by the backtracking line search. In Figure \ref{fig:BID-images}, we present the original images, blurred images, and the recovered images by our UBAMA equipped with two different Bregman kernel functions. It is easy to see that both UBAMA variants could recover blur kernels and piecewise smooth images. Note that the BID problem is usually extremely ill-posed and highly nonconvex. In this situation, the accuracy of solving subproblems and the proximal parameters possibly affects the numerical behaviors of the UBAMA significantly. As an illustration, we see from Figure \ref{fig:BID_obj-SNR} that although the UBAMA equipped with two \eqref{eq:bid-y-e} and \eqref{eq:bid-y-ne} achieves almost the same objective values, \eqref{eq:bid-y-e} and \eqref{eq:bid-y-ne} have the opposite performance in terms of SNR values for {\sf Peppers} and {\sf Books}. However, when we are only concerned with the objective values, the UBAMA equipped with \eqref{eq:bid-y-ne} takes a little less computing time than the other one to achieve an acceptable objective value, which further supports that our UBAMA provides an efficient way to design customized algorithms for some real-world applications.

\begin{figure}[!tb]
	\begin{center}
		\includegraphics[width=0.24\textwidth]{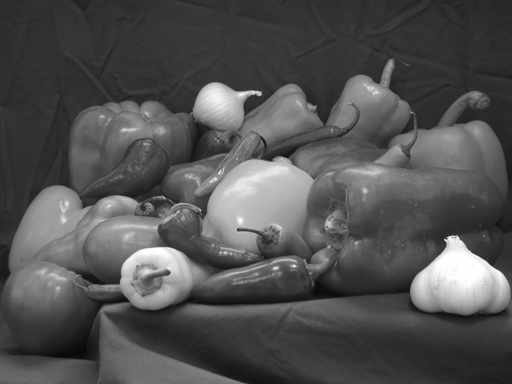}
		\includegraphics[width=0.24\textwidth]{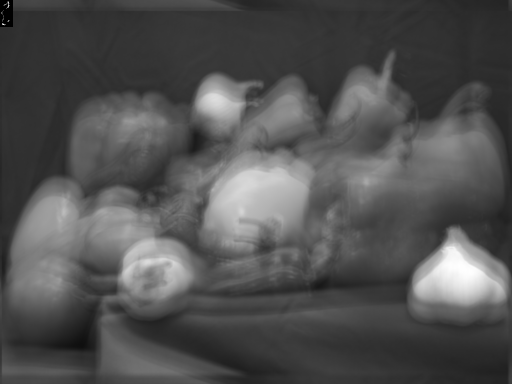}
		\includegraphics[width=0.24\textwidth]{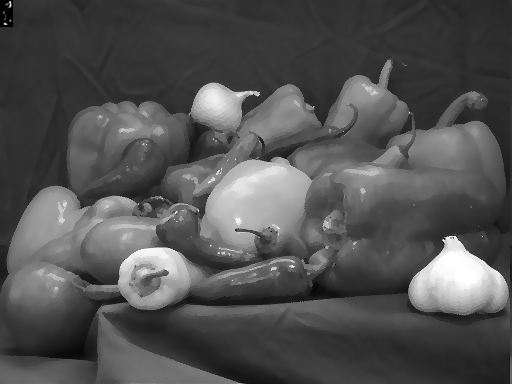}
		\includegraphics[width=0.24\textwidth]{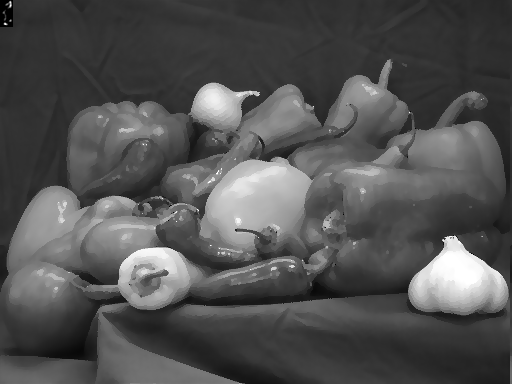}\\
		\includegraphics[width=0.24\textwidth]{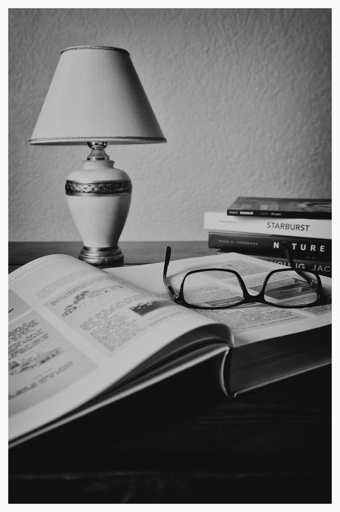}
		\includegraphics[width=0.24\textwidth]{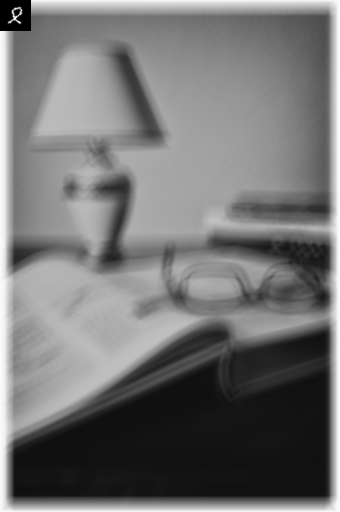}
		\includegraphics[width=0.24\textwidth]{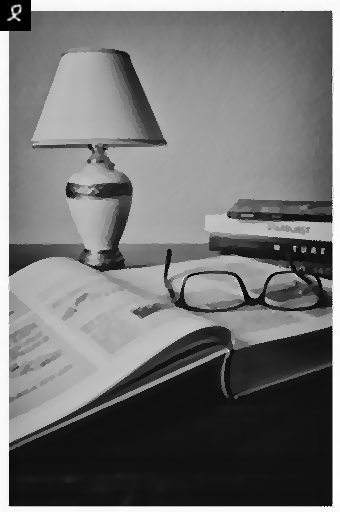}
		\includegraphics[width=0.24\textwidth]{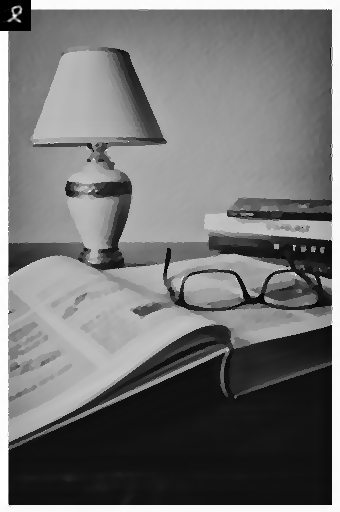}
	\end{center}
	\caption{BID restoration results for {\sf Peppers} and {\sf Books}. From left to right: clean image and blurred image, restored images by UBAMA equipped with $\varphi_k(y) = \sum_i y_i\log (y_i)$ and $\varphi_k(y) = 0.5 \|y\|^2$.}\label{fig:BID-images}
\end{figure}


\begin{figure}[!tb]
	\begin{center}
		\includegraphics[width = 0.45\textwidth]{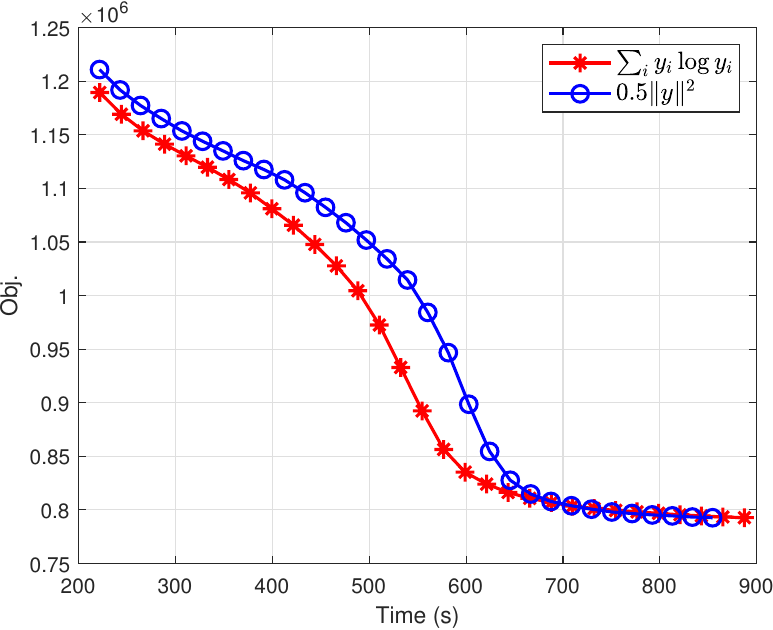}
		\includegraphics[width = 0.45\textwidth]{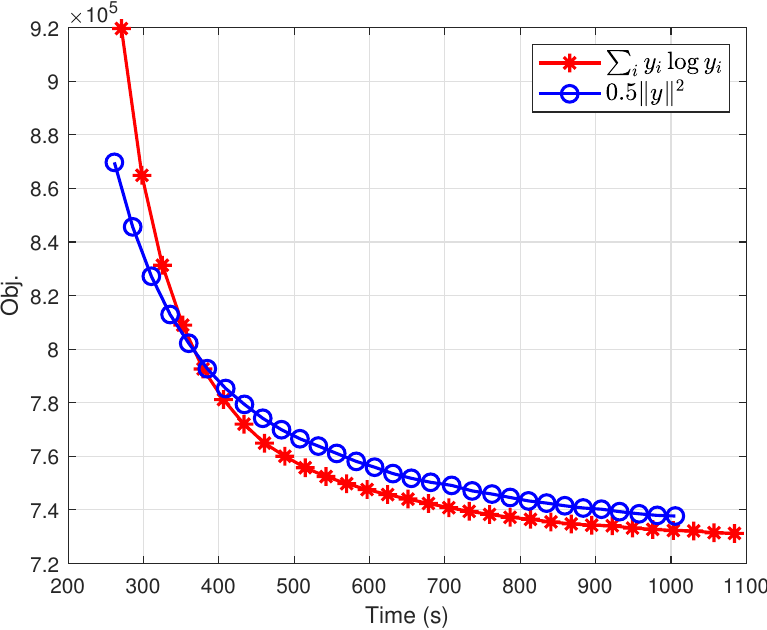}
		\includegraphics[width = 0.45\textwidth]{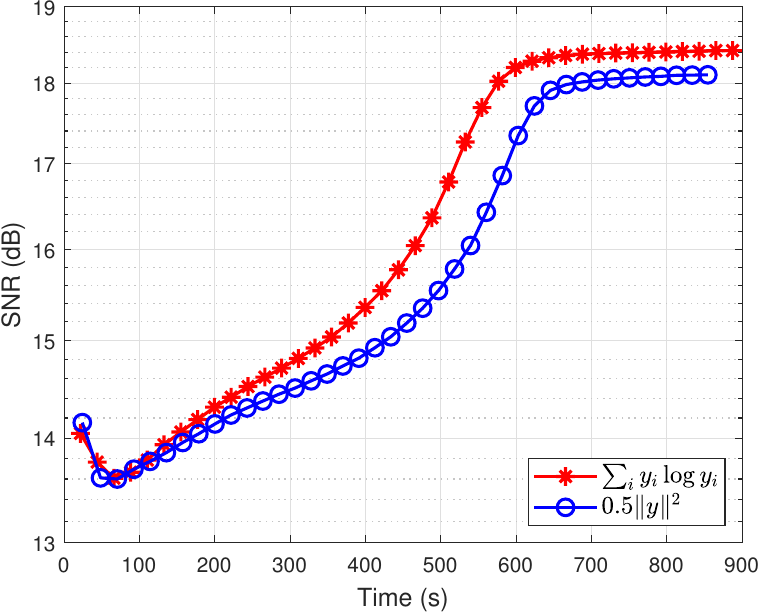}
		\includegraphics[width = 0.45\textwidth]{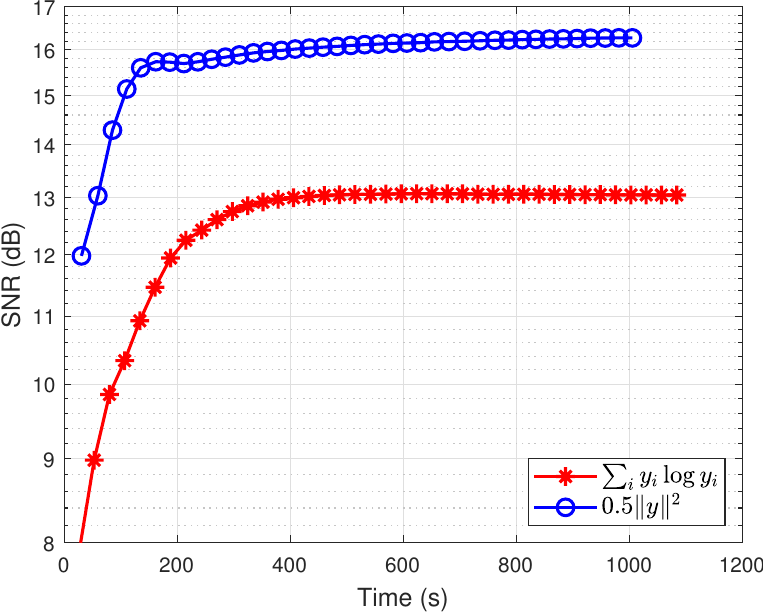}
	\end{center}
	\caption{Evolution of objective function and SNR values with respect to computing time, respectively. From left to right: {\sf Peppers} and {\sf Books}.}\label{fig:BID_obj-SNR}
\end{figure}

\section{Conclusions} \label{Sec:Con}
In this paper, we considered a class of structured nonconvex optimization problem, which is called generalized DC programming. The main computational challenges are the nonsmoothness, nonconvexity, and nonseparability appeared in the objective function. Designing an algorithm to circumvent the above difficulty is very important to efficiently solving the problem under consideration. So, we proposed a Unified Bregman Alternating Minimization Algorithm (UBAMA), which combines the novel spirits of DCA, alternating minimization, and Bregman regularization. Our UBAMA provides a flexible way to design customized algorithms for some real-world problems so that they often enjoy easy subproblems, which also is supported by some applications in image processing. Moreover, a series of numerical experiments demonstrated that our UBAMA performs well on solving generalized DC programming. In recent years, the inertial technique is widely used to accelerate many first-order optimization methods. So, we will consider some acceleration on our UBAMA by using the inertial technique in future. On the other hand, although our UBAMA enjoys easy subproblems for many sparse and low-rank optimization problems, there are some cases that we need to call solvers for finding solutions of subproblems. Therefore, designing inexact variants of UBAMA is also one of our future concerns. Finally, the stability of nonconvex optimization methods is very important for applications. So, we will also pay our attention on the stability of UBAMA in future.

\section*{Acknowledgments}
The authors would lik to thank Professors Thomas Pock and Shoham Sabach for sharing their code of \cite{PS16} with us.


\end{document}